\let\oldtocsection=\tocsection
\let\oldtocsubsection=\tocsubsection
\let\oldtocsubsubsection=\tocsubsubsection
\renewcommand{\tocsection}[2]{\hspace{0em}\oldtocsection{#1}{#2}}
\renewcommand{\tocsubsection}[2]{\hspace{1em}\oldtocsubsection{#1}{#2}}
\renewcommand{\tocsubsubsection}[2]{\hspace{2em}\oldtocsubsubsection{#1}{#2}}
\def\a{\alpha}
\def\b{\beta}
\def\d{\delta}
\def\e{\epsilon}
\def\g{\gamma}
\newcommand{\N}{\mathbb{N}}
\renewcommand{\to}{\longrightarrow}
\def\co{\colon\thinspace}
\newcommand{\C}{\mathbb{C}}
\newcommand{\Z}{\mathbb{Z}}
\newcommand{\R}{\mathbb{R}}
\newcommand{\SL}{\mathrm{SL}}
\newcommand{\SU}{\mathrm{SU}}
\newcommand{\Tr}{\mbox{tr }}
\newcommand{\MCG}{\mathcal{MCG}}
\newcommand{\HH}{{\mathbf H}^2}
\newcommand{\X}{\mathfrak{X}}
\newcommand{\Sc}{\mathcal{S}}
\newcommand{\Ec}{\mathcal{E}}
\newcommand{\tr}{\mathrm{tr}}
\newcommand{\hm}{\mathrm{Hom}}
\renewcommand{\to}{\longrightarrow}
\def\co{\colon\thinspace}
\newcounter{notes}
\def\a{\alpha}
\def\b{\beta}
\def\d{\delta}
\def\e{\epsilon}
\def\g{\gamma}
\renewcommand{\to}{\longrightarrow}
\def\co{\colon\thinspace}
\newcommand{\Tc}{\mathcal{T}}
\newcommand{\Out}{\mathrm{Out}}
\newcommand{\SUtwo}{\mathrm{SU}(2,1)}
\newcommand{\SLthreeC}{\mathrm{SL}(3,\C)}
\theoremstyle{plain}
\newtheorem{Theorem}{Theorem}[section]
\newtheorem{Lemma}[Theorem]{Lemma}
\newtheorem{Proposition}[Theorem]{Proposition}
\newtheorem{Corollary}[Theorem]{Corollary}
\newtheorem{introthm}{Theorem}
\theoremstyle{definition}
\newtheorem{Definition}[Theorem]{Definition}
\newtheorem{Remark}[Theorem]{Remark}
\begin{document}

\title{Dynamics on the $\SUtwo$--character variety of the one-holed torus}

\author{Sean Lawton}
\address{Department of Mathematical Sciences, George Mason University}
\email{slawton3@gmu.edu}
\urladdr{https://science.gmu.edu/directory/sean-lawton}

\author{Sara Maloni}
\address{Department of Mathematics, University of Virginia}
\email{sm4cw@virginia.edu}
\urladdr{https://sites.google.com/view/sara-maloni}

\author{Fr\'{e}d\'{e}ric Palesi}
\address{I2M, Aix Marseille Universit\'{e}, CNRS, UMR 7373, 13453 Marseille, France}
\email{frederic.palesi@univ-amu.fr}
\urladdr{www.i2m.univ-amu.fr/perso/frederic.palesi}

\begin{abstract}
We study the relative $\SUtwo$--character varieties of the one-holed torus, and the action of the mapping class group on them. We use an explicit description of the character variety of the free group of rank two in $\SUtwo$ in terms of traces, which allow us to describe the topology of the character variety. We then combine this description with a generalization of the Farey graph adapted to this new combinatorial setting, using ideas introduced by Bowditch. Using these tools, we can describe an open domain of discontinuity for the action of the mapping class group which strictly contains the set of convex cocompact characters, and we give several characterizations of representations in this domain. 
\end{abstract}



\maketitle

\tableofcontents

\section{Introduction}\label{s:intro}

Let $\Gamma$ be the fundamental group of the $1$--holed torus $S_{1,1}$, that is, $\Gamma \cong F_2 =\langle \a, \b\rangle$, a free group of rank $2$. The Lie group $\SU(2,1)$ acts on the space of group homomorphisms $\hm(F_2,\SU(2,1))$ by conjugation.  Let $\hm(F_2,\SU(2,1))^*$ be the subspace of $\hm(F_2,\SU(2,1))$ consisting of homomorphisms having closed conjugation orbits, and define the $\SU(2,1)$-character variety of $F_2$ to be the quotient space $\X = \hm(F_2, \SU(2,1))^*/\SU(2,1)$.  The mapping class group $\MCG(S_{1,1})$ is isomorphic to the outer automorphisms of $\Gamma$ and acts on $\X$ by pre-composition.

The main result of this paper is the following: 
\begin{introthm}\label{Main} There exists an open domain of discontinuity $\X_{BQ}$ for the action of the mapping class group $\MCG(S_{1,1})$ on $\X$ which strictly contains the set of discrete, faithful, convex cocompact representations.
\end{introthm}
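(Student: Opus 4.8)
The plan is to mirror the strategy that has proven successful for $G=\mathrm{SL}(2,\C)$ (Bowditch, Tan--Wong--Zhang) and for the $4$-holed sphere (Maloni--Palesi--Tan), but adapted to the combinatorics forced by the presence of the compact factor in $\SUtwo$. First I would fix the trace coordinates on $\X = \X(F_2,\SUtwo)$ coming from the explicit description recalled above, so that a character $[\rho]$ is recorded by the traces of $\rho$ along the primitive conjugacy classes of $F_2$, which are indexed by $\Q\cup\{\infty\}$ in the usual way via the Farey graph $\F$. The key structural input is that the relative character variety fibers over the conjugacy class of $\rho([\a,\b])$ (the boundary of $S_{1,1}$), and the mapping class group $\MCG(S_{1,1})\cong\Out(F_2)$ acts on the index set $\Q\cup\{\infty\}$ through its quotient $\PGLtwoZ$, acting on vertices of $\F$. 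I would then define $\X_{BQ}$ by a Bowditch-type condition: $[\rho]\in\X_{BQ}$ if the associated trace function on the vertices of $\F$ satisfies (i) no primitive element is sent to the identity (a ``no-zero'' condition appropriate to the compact trace functions), and (ii) the function is proper/bounded-below away from a finite set, i.e. only finitely many primitives have trace in some fixed neighborhood of the excluded value. The main work is to establish the analogue of Bowditch's trilinear/Fibonacci-tree machinery: using the Markov-type trace identities for $\SUtwo$ together with a version of Bowditch's combinatorial lemmas adapted to the new setting (as the abstract advertises), one shows that the BQ condition propagates coherently across the Farey graph and forces the attracting/repelling structure on the tree of primitives to be essentially unique.

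Second, I would prove $\X_{BQ}$ is \emph{open}. This follows from the characterization: the relevant condition is an open condition because the ``finite set of small-trace primitives'' is locally constant in $[\rho]$ — a neighborhood argument using the continuity of trace functions and the uniform properness estimate shows that perturbing $\rho$ neither creates new small-trace primitives nor destroys the no-zero condition. This is where one must be careful with the compact group: unlike in $\SLtwoC$, traces are confined to a bounded region, so ``growth to infinity'' must be replaced by ``bounded away from the bad locus'', and the openness argument correspondingly rests on a quantitative statement extracted from the Farey-graph recursion rather than from an escape-to-infinity estimate.

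Third, I would prove the action of $\MCG(S_{1,1})$ on $\X_{BQ}$ is \emph{properly discontinuous}. Fix a compact set $K\subset\X_{BQ}$; by the uniform properness across $K$ there is a uniform bound $N$ and $\e>0$ such that every $[\rho]\in K$ has at most $N$ primitives with trace $\e$-close to the bad value. If $g\in\MCG(S_{1,1})$ satisfies $gK\cap K\neq\emptyset$, then $g$ must carry the corresponding finite ``core'' subset of vertices of $\F$ for one character onto that of another, both lying in a compact family; since $\PGLtwoZ$ acts on $\F$ with the property that only finitely many elements move a given finite vertex set into another bounded region, only finitely many such $g$ can occur. This is the standard Bowditch-to-discontinuity argument and I expect it to go through with the combinatorial lemmas from the body of the paper in place.

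Fourth and finally, I would establish the strict containment $\X_{CC}\subsetneq\X_{BQ}$. That $\X_{CC}\subseteq\X_{BQ}$ follows because a discrete faithful convex-cocompact representation into $\SUtwo$ has every nonperipheral primitive element mapped to a loxodromic (or otherwise nontrivial) element with a definite translation length bound, which immediately yields both the no-zero condition and the uniform properness; the real geometric content is finding an explicit character in $\X_{BQ}\setminus\X_{CC}$ — e.g. a representation with some primitive element elliptic or with accidental parabolics on the boundary — and verifying by direct computation with the trace identities that it still satisfies the BQ condition while failing convex-cocompactness. \emph{The hard part} of the whole argument is the first step: setting up the right Farey-graph-with-extra-structure (following Bowditch) and proving that the $\SUtwo$ trace recursions are still ``tree-like'' enough that the BQ condition has the rigidity needed for both openness and proper discontinuity — the compactness of the target means the usual sign/monotonicity tricks fail and must be replaced by the generalized combinatorial framework the paper develops.
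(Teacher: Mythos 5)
There is a fundamental misconception running through the proposal: you repeatedly treat $\SUtwo$ as a compact group, referring to ``the compact factor in $\SUtwo$,'' ``compact trace functions,'' and claiming ``traces are confined to a bounded region'' so that ``growth to infinity'' must be replaced by a bounded-away-from-bad-locus condition. This is wrong. $\SU(2,1)$ is a \emph{noncompact} real form of $\SL(3,\C)$ — it is the isometry group of the complex hyperbolic plane $\mathbf{H}^2_\C$, a noncompact symmetric space, and the trace map $\tr\colon\SUtwo\to\C$ is \emph{surjective} (Goldman, Theorem 6.2.4). The compact form is $\SU(3)$, which the paper explicitly contrasts with $\SUtwo$: the two satisfy identical real algebraic trace equations but are not homeomorphic (one retracts to $S^8$, the other to a torus). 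Because traces are unbounded, the entire Bowditch/Fibonacci escape-to-infinity machinery carries over directly. The actual Bowditch condition in the paper is the natural generalization of the $\SL(2,\C)$ one: (BQ1) every simple closed curve is sent to a \emph{loxodromic} element (equivalently $f(\tr)>0$, i.e. the trace lies outside the deltoid $f^{-1}(0)$), and (BQ2) only finitely many simple closed curves have trace of modulus below an explicit threshold $M(c)$. Your proposed conditions — ``no primitive sent to the identity'' and ``finitely many traces in a neighborhood of the excluded value'' — are both incorrect reformulations: the first is far too weak (elliptics and parabolics of nonidentity type must also be excluded), and the second is phrased for a bounded trace function that does not exist here.

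Your suggestion for strict containment also contains an error that the misconception propagates: you propose using ``a representation with some primitive element elliptic'' as an example in $\X_{BQ}\setminus\X_{CC}$, but an elliptic primitive violates (BQ1), so such a character cannot lie in $\X_{BQ}$. The paper instead uses Will's representations of the punctured torus into $\SUtwo$ in which all primitive (non-peripheral) elements are loxodromic but the \emph{boundary commutator} $[\a,\b]$ — which is not a primitive — is unipotent; this kills convex-cocompactness while leaving the BQ conditions intact. Aside from these points, the outline of steps two and three (openness via local constancy of a finite attracting structure, proper discontinuity via the action of $\PGLtwoZ$ on the tree moving a finite core off itself) is the right strategy and matches the paper's use of the attracting subgraph $T_\rho(K)$. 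But the key technical content you correctly flag as ``the hard part'' — the Fork Lemma and the neighbor-growth analysis using the recursion $u_{n+3}=xu_{n+2}-\overline{x}u_{n+1}+u_n$ and the function $f$ that detects the eigenvalue types of $\SUtwo$ elements — is exactly where the noncompactness and the deltoid geometry enter, and your proposal does not correctly anticipate that structure.
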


To prove this theorem we combine ideas of Bowditch \cite{bow_mar} (further generalized by Tan-Wong-Zhang \cite{tan_gen}, Maloni-Palesi-Tan \cite{mal_ont} and Maloni-Palesi \cite{MaPa1}) with the explicit parametrization of the $\SLthreeC$-character variety of $F_2$ described by Lawton \cite{law0,law1}. In order to describe the set of representations that form $\X_{BQ}$, called \emph{Bowditch representations}, we need to analyze what are called ``relative character varieties." Given $c \in \C$, we define the {\it relative character variety} $\X_c$ to be the set of $\SU(2,1)$-valued representations of $\Gamma$ with fixed trace of the peripheral element $[\a, \b]$:
 $$\X_c =\{[\rho] \in \X \mid \mathrm{tr}(\rho([\a, \b])) = c \}.$$

As a bi-product of our work, we can show that Bowditch representations can be defined in many equivalent ways. For an isometry $\varphi$ of complex hyperbolic space $ \mathbf{H}_\C^2$, define  $l(\varphi)$ as follows: 
$$l(\varphi) := \inf \{ d_{ \mathbf{H}_\C^2}(x , \varphi (x)) \mid x \in \mathbf{H}_\C^2 \}.$$

Let $\Sc$ be the set of free homotopy classes of essential unoriented simple closed curves on $S_{1,1}$. We can define the following properties for representations in $\X_c$:
\begin{enumerate}
  \item[{(BQ1)}] $\forall \g \in \Sc$, $f(\Tr\rho(\g)) >0$ (equivalently $\rho(\g)$ is loxodromic), where $$f(t)=|t|^4 - 8 \mathrm{Re} (t^3) + 18 |t|^2 - 27.$$ 
  \item[{(BQ2)}] $\#\{\g\in\Sc \mid |\Tr\rho(\g)| \le M(c) \} < \infty$, where $M(c)> 0$ is defined in Section \ref{definition}.
  \item[{(BQ2')}] $ \forall K>0$, $\; \#\{\g\in\Sc \mid |\Tr\rho(\g)| \le K\} < \infty.$
  \item[{(BQ3)}] There exists $k, m >0$ such that $|l (\rho (\gamma))| \geq k W(\gamma) - m $ for all $\gamma \in \Sc$.
  \item[{(BQ4)}] $\forall K\geq M(c)$, the attracting subgraph $T_\rho (K)$ (defined in Section \ref{sec:tree}) is finite.
  \end{enumerate}
  
In Section \ref{sec:bowditch-characterization} we prove the following result:
\begin{introthm}\label{Main-characterization} 
Let $[\rho]\in\X_c$. The following are equivalent:
\begin{enumerate}
\item[$(1)$] The representation $\rho$ satisfies $(BQ1)$ and $(BQ2)$;
\item[$(2)$] The representation $\rho$ satisfies $(BQ1)$ and $(BQ2')$;
\item[$(3)$] The representation $\rho$ satisfies $(BQ3)$;
\item[$(4)$] The representation $\rho$ satisfies $(BQ4)$.
\end{enumerate}
\end{introthm}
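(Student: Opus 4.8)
The plan is to prove the cycle of implications $(1)\Rightarrow(2)\Rightarrow(4)\Rightarrow(3)\Rightarrow(1)$, drawing on the tools assembled in the preceding sections: the trace recursion along the edges of the (generalized) Farey graph $\F$ of $S_{1,1}$ coming from Lawton's parametrization of $\X(F_2,\SLthreeC)$ specialized to $\SUtwo$; the comparison $l(\rho(\gamma))\asymp\log|\Tr\rho(\gamma)|$ valid once $|\Tr\rho(\gamma)|$ is large, together with its qualitative consequence that a bounded trace forces a bounded translation length; the fact that only finitely many $\gamma\in\Sc$ have $W(\gamma)$ below any prescribed bound; and the analysis of the Dehn-twist recursion about a fixed curve, which is where the interval $[-2,2]$ in $(BQ1)$ enters. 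The constant $M(c)$ of Definition~\ref{definition} is chosen large enough that this trace recursion is uniformly expanding on $\F$ outside the region where the relevant traces are $\le M(c)$, and this is the pivot of the whole argument.

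First I would prove $(1)\Rightarrow(2)$, which is the substantive step. Assuming $(BQ1)$ and $(BQ2)$, the ``core'' $C=\{\gamma\in\Sc:|\Tr\rho(\gamma)|\le M(c)\}$ is finite, and the goal is a \emph{trace growth lemma}: under $(BQ1)$, along any edge-path of $\F$ leaving $C$ the trace recursion is in its expanding regime, and iterating it gives $\log|\Tr\rho(\gamma)|\ge kW(\gamma)-m$ for suitable $k,m>0$ (the growth is Fibonacci-type along the dual tree, hence linear in $W(\gamma)$). In particular $\{\gamma:|\Tr\rho(\gamma)|\le K\}$ is finite for every $K$, which is $(BQ2')$, and $(BQ1)$ is unchanged. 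The implication $(2)\Rightarrow(4)$ is then a matter of unwinding definitions: under $(BQ1)$ the flow structure on $\F$ used to define $T_\rho(K)$ in Section~\ref{sec:tree} is well-behaved, and $(BQ2')$ bounds the set of vertices at which traces drop below $K$, so $T_\rho(K)$ has finite vertex set, hence is finite, for every $K\ge M(c)$.

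For $(4)\Rightarrow(3)$, note first that finiteness of $T_\rho(M(c))$ already forces $(BQ1)$: if some $\gamma_0$ had $\Tr\rho(\gamma_0)\in[-2,2]$, the curves obtained by iterating the Dehn twist about $\gamma_0$ would, by the twist recursion, form an infinite family whose traces stay in a fixed bounded set of radius $\le M(c)$, contradicting finiteness. Granted $(BQ1)$, the trace growth lemma again gives $\log|\Tr\rho(\gamma)|\ge kW(\gamma)-m$ off the finite core, and combining this with $l(\rho(\gamma))\asymp\log|\Tr\rho(\gamma)|$ for large traces and absorbing the finitely many core curves into the additive constant yields $|l(\rho(\gamma))|\ge k'W(\gamma)-m'$ for all $\gamma\in\Sc$, i.e.\ $(BQ3)$. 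Finally, $(3)\Rightarrow(1)$ is proved by contraposition in two halves. If $(BQ1)$ fails at $\gamma_0$, the Dehn-twist analysis produces curves $\gamma_n$ with $W(\gamma_n)\to\infty$ but $|\Tr\rho(\gamma_n)|$, hence $l(\rho(\gamma_n))$, bounded, contradicting $(BQ3)$; and if $(BQ2)$ fails, the infinitely many $\gamma$ with $|\Tr\rho(\gamma)|\le M(c)$ have $W(\gamma)\to\infty$ (since only finitely many curves have bounded complexity) and bounded $l(\rho(\gamma))$, again contradicting $(BQ3)$. The hard part will be the trace growth lemma underlying $(1)\Rightarrow(2)$ and reused in $(4)\Rightarrow(3)$: one must show that the $\SLthreeC$ trace identities, restricted to $\SUtwo$ and evaluated along $\F$ outside the $M(c)$-core, are genuinely expanding, and it is precisely this estimate --- passing from the one-variable real recursion familiar for $\SLtwoC$ to the complex recursion here --- that dictates the value of $M(c)$.
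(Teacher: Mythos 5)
Your proposed cycle $(1)\Rightarrow(2)\Rightarrow(4)\Rightarrow(3)\Rightarrow(1)$ is logically sound and correctly identifies the load-bearing ingredient: a Fibonacci-type lower bound for $\log^{+}|\Tr\rho(\gamma)|$ once one is outside a finite ``core,'' plus the comparison $l(\rho(\gamma))\asymp\log^{+}|\Tr\rho(\gamma)|$ for loxodromic elements. The paper organizes the argument differently --- rather than a cycle, it proves each of $(2)$, $(3)$, $(4)$ equivalent to the Bowditch definition $(1)$ separately, feeding all three from Proposition~\ref{lower_fib} (lower Fibonacci bound), Proposition~\ref{prop:word} (identification $F_v = W$), and Propositions~\ref{attracting} and~\ref{finite} on $T_\rho(K)$ --- but the tools are the same and nothing in your decomposition loses information.

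One imprecision worth flagging, because it recurs in both your $(4)\Rightarrow(3)$ and $(3)\Rightarrow(1)$ steps: the non-loxodromic failure mode in $\SUtwo$ is $f(\Tr\rho(\gamma_0))\le 0$, not literally a real trace in $[-2,2]$, and in the boundary case $f=0$ (parabolic) the traces of the Dehn-twist family $\gamma_0^n\beta$ do \emph{not} stay in a bounded set: as shown in Appendix~A, the recursion matrix $M_x$ is then a Jordan block with unit-modulus eigenvalue and the traces grow polynomially (at most quadratically) in $n$. So ``traces bounded by $M(c)$, contradicting finiteness of $\Omega(M(c))$'' is false in this case. The correct mechanism is either (i) for $(4)$: $H_c(x)=\infty$ when $f(x)\le 0$, so the arc $\widetilde J_\rho(K,\gamma_0)$ --- and hence $T_\rho(K)$ --- is automatically infinite, regardless of where the traces land; or (ii) for $(3)$: polynomial trace growth gives $l(\rho(\gamma_0^n\beta))=O(\log n)$ while $W(\gamma_0^n\beta)\sim n$, which still violates the linear lower bound in $(BQ3)$, just not via boundedness. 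Both fixes use tools you already cite, so this is a refinement rather than a flaw, but as written the parabolic case would slip through.
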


So the set of Bowditch representations $\X_{BQ}$ referred to in Theorem \ref{Main} is the union over $c\in \C$ of the Bowditch representations in $\X_c$ defined by any of the equivalent conditions in Theorem \ref{Main-characterization}.  A corollary of the above theorem, given the work of \cite{schlich}, is that our set $\X_{BQ}$ coincides with the set of ``primitive-stable representations" defined in \cite{kim_kim_2020}.

We also prove the somewhat surprising result that the topology of the character variety $\X$ is non-trivial.  In particular, we prove:
\begin{introthm}\label{Main2}
The $\SU(2,1)$-character variety of $F_2$ $\X$ strong deformation retracts onto $S^1\times S^1$.  In particular, it is not simply connected.
\end{introthm}
This is surprising because the $\SU(3)$-character variety of $F_2$ satisfies the same real algebraic equations and has the same (real) dimension, but it is not homeomorphic to $\X$, since the $\SU(3)$-character variety of $F_2$ is homeomorphic to an $8$-sphere by Florentino and Lawton \cite{FlLa1}.

\subsection*{Proof Strategy} 

We now give an overview of the proof of \Cref{Main} and \Cref{Main-characterization}. 
The proof can be organized into five main steps:
\begin{enumerate}
	\item[(0)] Description of the combinatorial setup: $\mathcal{C}, \mathcal{T},\mathcal{E}$;
	\item[(1)] Orientation on $\mathcal{E}$, fork lemma and connectivity of the regions $\Sigma$;
	\item[(2)] Growth of traces for neighbors around a region and for escaping rays, and definition of an attracting subgraph of $\mathcal{E}$;
	\item[(3)] Fibonacci growth and equivalent characterizations of Bowditch representations;
	\item[(4)] Proof that the set $\X_{BQ}$ strictly contains convex cocompact representations. 
\end{enumerate}

\begin{enumerate}
\item[Step (0):] We start with a discussion of the combinatorial setup needed for our study. Given the one-hole torus $S:=S_{1,1}$, we define three simplicial complexes associated to $S$: the curve complex $\mathcal{C}$, its dual $\mathcal{T},$ and the edge graph $\mathcal{E}$. The curve complex $\mathcal{C}$ is the $2$--dimensional simplicial complex, where $k$--simplices are given by $k+1$ distinct (free homotopy classes of) simple closed curves in $S$ that pairwise intersect once. Its simplicial dual $\mathcal{T}$ is a countably infinite tri-valent simplicial tree properly embedded in $\HH$ defined by the property that the set $\mathcal{T}^{(k)}$ of $k$--simplices in $\mathcal{T}$ satisfies $\mathcal{T}^{(k)} = \mathcal{C}^{(2-k)}$ for $k =0,1,2$. Finally, the \textit{edge graph} $\mathcal{E}$ has vertices corresponding to edges of $\mathcal{T}$, and edges corresponding to adjacency, so one can see that $\mathcal{E}$ is a ``trivalent tree of triangles" in the sense that it is a graph made of triangles (one for each triangle $T$ in $\mathcal{C}^{(2)}$ or, equivalently, one for each vertex of $\mathcal T$). See Figure \ref{fig:f-t}.

\item[Step (1):] For the first step, we show how any representation in $\X_c$ induces an orientation on the edges of $\mathcal{E}$ as follows. An edge $(X, Y, Z; T, T) \in \mathcal{E}$ indicates that the edge is in the triangle $(X, Y, Z) \in \mathcal{C}^2$ and is between the triangles $(X, Y, T), (X, Z, T') \in \mathcal{C}^2$ which are adjacent to $(X, Y, Z)$, see Figure \ref{fig:e_oriented}. If $t = |\tr (\rho (T)) | > |\tr (\rho (T'))| = t' $, then the oriented edge points towards $T'$. In case where there is an equality, one can choose the orientation arbitrarily (and the choice will not affect the result). 

We have a $4$--coloring of the edges of $\mathcal{E}$ such that at each vertex there are only two triangles of two different colors meeting. We say that a vertex $v \in \mathcal{E}^{(0)}$ is a \textit{fork} if there are (at least) two arrows of different colors pointing away from $v$. The Fork Lemma (\Cref{fork}) shows that if a vertex $v = (X, Y) \in \mathcal{E}^{(0)}$ is a fork, then $\min \{|x| , |y| \} \leq \max\{6,\sqrt{|\Re(c)|}\}$ for $x=\Tr(\rho(X))$ and $y=\Tr(\rho(Y))$. We use this result to show, for $[\rho] \in \X_c$, the set $\Omega_\rho (K)$  of regions with traces smaller than $K \geq  \max\{6,\sqrt{|\Re(c)|}\}$ is connected. The proof is by contradiction and considers the distance between two connected components. The Fork Lemma is used to reach the conclusion when such a distance is greater than $2$ since in that case a fork will appear.

\item[Step (2)] For the second step we first study the behavior of the values of the (norm of the) traces of regions that are all neighbors of a central region $X$. Given $[\rho] \in \X_c$ we see that if we denote $(X_n)_{n \in \Z}$ the regions around $X$, then the behavior of the sequence $(u_n)_{n\in \N}$ defined by $u_n = \tr (\rho (X_n))$ depends on the value of $f(x)$ where $x=\Tr(\rho(X))$ and $f(x) = \mathrm{Res} (\chi_x , \chi_x')$. In fact if $f(x) = 0$, then $(u_n)_{n\in \N}$ grows at most quadratically, while if $f(x) < 0$, then $(u_n)_{n\in \N}$ remains bounded. Finally, in \Cref{prop:bounded_neighbors} we show that there exists a constant $D = D(c)$, such that if the sequence $(u_n)_{n\in \N}$ defined above with $f(x) >0$ does not grow exponentially in both directions, then there are infinitely many terms of the sequence such that $|u_n| < D$. We then consider \emph{escaping rays}, that is infinite geodesic rays where each edge $e_n$ is directed from $v_n$ to $v_{n+1}$. In \Cref{lem:escaping} show that if $\{ e_n \}_{n\in \N}$ is an escaping ray, then there are two cases: 
\begin{enumerate}
\item[$(1)$] there exists a region $\a$ such that the ray is eventually contained in the boundary of $\a$, such that $f(x) \leq 0$ with $x = \tr (\rho (\a))$, or 
\item[$(2)$] the ray meets infinitely many distinct regions with trace smaller than $M(c)$, where $M(c):=\max \{6, \sqrt{|\Re (c)|} , D(c)\}$.
\end{enumerate}

Using the description of the growth of the traces for regions around a fixed central trace, we define, for every representation $[\rho] \in \X_c$ and for every $K> M(c)$, a connected attracting subgraph $T_\rho(K)$ of $\mathcal{E}$. (Recall that $\mathcal{E}$ is a ``trivalent tree of triangles", so we want to define $T_\rho(K)$ so that it is a union of ``triangles''.) The connectivity of $T_\rho(K)$ comes from the connectivity of $\Omega_\rho (K)$, while the fact that $T_\rho(K)$ is attractive comes from the definition of $T_\rho(K)$. We then show that for representations in $\X_{BQ}$, that is representations satisfying conditions (BQ1) and (BQ2), such a graph $T_\rho(K)$ is finite for all $K> M(c)$.

\item[Step (3):] In the penultimate step, given a vertex $v \in \mathcal{E}^{(0)}$ we define a Fibonacci function $F_v : \Omega \rightarrow \N$. We then show that this function has the following property: if $\{\alpha, \beta\}$ is a set of free generators for $F_2$ corresponding to regions $X$ and $Y$ such that $v =(X, Y) \in \mathcal{E}^{(0)}$, then for any element $\gamma \in \Omega$ the Fibonacci function corresponds to the word length of $\gamma$ with respect to $\{\alpha, \beta\}$, that is $F_v(\gamma) = W(\gamma)$. Note that we only care about the asymptotic growth of this function, as you will see, so the choice of $v$ will not affect our results. We then define what it means for a function $g \co\Omega \rightarrow [0, \infty)$ to have \textit{Fibonacci growth}: if there exist constants $\kappa_1, \kappa_2 >0$ such that $$\kappa_1 F_v (X) \leq g(X) \leq \kappa_2 F_v (X)$$ for all $X \in \Omega$. If only the lower (respectively upper) bound is satisfied, the function $g$ is said to have a lower (respectively upper) Fibonacci growth. Given any $[\rho] \in \mathfrak{X}$, we denote by $\phi_\rho \co\Omega \rightarrow \C$ the function $\phi_\rho (X) := \log | \tr (\rho (X)) |$. We consider the function $\phi_\rho^+ := \max \{ \phi_\rho , 0 \}$. We then show that this function always has an upper Fibonacci growth, but for representations in $[\rho] \in \mathfrak{X}_{BQ}$ we show that the function has upper and lower Fibonacci growth. 

We use this result to show that we can characterize representations in $\X_{BQ}$ in terms of the attracting graph $T_\rho(K)$ as follows: $[\rho]$ is in $(\X_c)_{BQ}$ if and only if the subgraph $T_\rho (K)$ is finite for all $K> M(c)$, which proves \Cref{Main-characterization}.  This result is crucial to show that the set $\X_{BQ}$ is open and the mapping class group acts on it properly discontinuously, and hence to prove \Cref{Main}.

\item[Step (4):] In order to show that the set $\X_{BQ}$ strictly contains convex cocompact representations, we use representations described by Will \cite{Will-07, Will-12}: a family of discrete, faithful and type-preserving representations of the once-punctured torus in $\X(F_2, \SU(2,1))$. These representations will take non-peripheral, non-trivial simple closed curves to loxodromic elements and will map the commutator to a parabolic unipotent element. In order to see that these representations are in $\X_{BQ}$, we use a characterization of both sets in term of the growth of $|l (\rho (\gamma))|$: the existence of $k, m >0$ such that $|l (\rho (\gamma))| \geq k W(\gamma) - m $ for all $\gamma \in F_2$ corresponding to non-peripheral, non-trivial simple closed curves in $S_{1,1}$. On the other hand, since the (peripheral) simple closed curve corresponding to the commutator is mapped to a unipotent element, we can see these representations are not convex cocompact. There are also other examples of representations that are in $\X_{BQ} \setminus \X_{CC}$ and such that the peripheral element is mapped to a parabolic non-unipotent element, see Falbel-Parker \cite{Falbel-Parker}, Falbel-Koseleff \cite{Falbel-Kos-rig, Falbel-Kos-circ}, or Parker-Gusevskii \cite{Gus-Parker-03, Gus-Parker-00}. See Will \cite{Will-16} for a discussion of these references. 
\end{enumerate}

\subsection*{General Setting}
Let $\Gamma$ be a finitely presentable group with the discrete topology and let $G$ be a Lie group.  The set $\mathrm{Hom}(\Gamma,G)$ of homomorphisms from $\Gamma$ to $G$ is a topological space by giving it the compact-open topology. The group $G$ acts on $\mathrm{Hom}(\Gamma,G)$ by conjugation, that is, $g\cdot\rho:= \iota_g\circ \rho$ where $\iota_g\in \mathrm{Inn}(G)$.  Let $\mathrm{Orb}_G(\rho)$ be the $G$-orbit of $\rho$.  Define the subspace of {\it polystable} homomorphisms $$\mathrm{Hom}(\Gamma,G)^*:=\{\rho\in\mathrm{Hom}(\Gamma,G)\ |\ \mathrm{Orb}_G(\rho)=\overline{\mathrm{Orb}_G(\rho)}\},$$ and define the {\it polystable} quotient space $$\mathfrak{X}(\Gamma,G):=\mathrm{Hom}(\Gamma,G)^*/G.$$  This quotient is called the $G$--{\it character variety of} $\Gamma$ for historical reasons, even though in this generality it may be neither a variety, see Casimiro-Florentino-Lawton-Oliverira \cite{CFLO}, nor correspond to characters, see Lawton-Sikora \cite{LaSi}. 

Let $\mathcal{P}$ be a collection of properties such that if $\rho\in \mathrm{Hom}(\Gamma, G)$ satisfies $\mathcal{P}$, then $\iota_g\circ \rho$ also satisfies $\mathcal{P}$ for all $g\in G$.  Then $G$ acts on the subspace $$\mathrm{Hom}_\mathcal{P}(\Gamma, G):=\{\rho\in \mathrm{Hom}(\Gamma, G)\ |\ \rho \ \mathrm{ satisfies }\ \mathcal{P}\}$$ by conjugation.  The polystable quotient of $\mathrm{Hom}_\mathcal{P}(\Gamma, G)$ by $G$, denoted $\mathfrak{X}_\mathcal{P}(\Gamma, G)$, is called the {\it $\mathcal{P}$-relative $G$-character variety of $\Gamma$}.  If $\mathcal{P}$ is vacuous, one gets back $\mathfrak{X}(\Gamma, G).$ 

Let $\mathrm{Aut}_\mathcal{P}(\Gamma)$ denote the subgroup of $\mathrm{Aut}(\Gamma)$ that preserves $\mathrm{Hom}_\mathcal{P}(\Gamma, G)$.  Then $\mathrm{Inn}(\Gamma)\subset \mathrm{Aut}_\mathcal{P}(\Gamma)$ and so $\mathrm{Out}_\mathcal{P}(\Gamma):=\mathrm{Aut}_\mathcal{P}(\Gamma)/\mathrm{Inn}(\Gamma)$ acts on $\mathfrak{X}_\mathcal{P}(\Gamma, G)$ by $[\alpha]\cdot[\rho]=[\rho\circ \alpha^{-1}]$.  Again, if $\mathcal{P}=\emptyset$, one obtains an action of $\mathrm{Out}(\Gamma)$ on $\mathfrak{X}(\Gamma, G)$.

One of the main theorems in Rapinchuk \cite{Rap} says that for any complex affine variety $V$, there exists a finitely generated group $\Gamma$ and a complex reductive group $G$, so that $\mathfrak{X}(\Gamma, G)-\{\chi_0\}$ and $V$ are isomorphic where $\chi_0$ is the point in the character variety corresponding to the trivial representation.  Let $\mathcal{F}$ be the collection of finitely generated groups and $\mathcal{G}$ the collection of complex reductive affine algebraic groups.  Then, since the $G$-character variety of the trivial group $\Gamma=\{e\}$ is a point for any $G$, the set $\{\mathfrak{X}(\Gamma, G)\ |\ \Gamma\in\mathcal{F},\ G\in \mathcal{G}\}$ generates the Grothendieck ring of affine varieties.  By Bumagin and Wise \cite{BW} {\it all} countable groups arise as $\mathrm{Out}(\Gamma)$ as we vary finitely generated $\Gamma$.  So we have a very rich class of natural dynamical systems $(\mathfrak{X}(\Gamma, G),\mathrm{Out}(\Gamma))$; interesting only when both $\mathfrak{X}(\Gamma,G)$ and $\mathrm{Out}(\Gamma)$ are non-trivial.

\subsection*{Some History}

A classical example to consider is when $G=\mathrm{U}(1)$ and $\Gamma=\Gamma_g$ is the fundamental group of a closed orientable surface $S_g$ of genus $g$.  Then $\X(\Gamma_g, G)=\mathrm{Hom}(\Gamma_g, \mathrm{U}(1))\cong \mathrm{U}(1)^{2g}$ which is identified with the Jacobian of $S_g$, parametrizing topologically trivial holomorphic line bundles over a Riemannian surface diffeomorphic to $S_g$.  The action of $\mathrm{Out}(\Gamma_g)$ factors through $\mathrm{GL}(2g,\Z)$ acting on a product of circles, and is classically ergodic.  The Dehn-Nielsen-Baer Theorem \cite{primer} states that for such surfaces the (extended) mapping class group of isotopy classes of all homeomorphisms of $S_g$ is isomorphic to $\mathrm{Out}(\Gamma_g)$. 

The above example generalizes to arbitrary compact Lie groups $K$ as follows.  The smooth locus of the character variety $\X(\Gamma_g,K)$ is symplectic, as proven by Goldman \cite{G-Sym}, and the associated invariant measure is finite, as shown by Huebschmann \cite{Hueb}.  With respect to this measure $\mathrm{Out}(\Gamma_g)$ acts ergodically on $\X(\Gamma_g,K)$ when $g\geq 2$, see Goldman \cite{gol_erg} and Pickrell-Xia \cite{PX1}.  For  non-orientable surfaces $N$ with $\chi(N)\leq -2$ and $G=\SU(2)$  Palesi \cite{pal_erg} shows ergodicity with respect to a different invariant measure.  

The situation for non-compact Lie groups $G$ is even more complicated, and we need two definitions to describe the situation.  First, for a semisimple Lie group $G$ with no compact factors and a rank $r$ free group $F_r$, a homomorphism $\rho : F_r \to G$ is said to be {\it primitive stable} if any bi-infinite geodesic in the Cayley graph of $F_r$ defined by a primitive element (a member of free generating set) is mapped under the orbit map to a uniformly Morse quasigeodesic in the symmetric space $G/K$ where $K$ is a maximal compact in $G$. Second, if $\rho$ is injective, $\rho(F_r)$ is discrete, and $\rho(F_r)$ preserves and acts cocompactly on some nonempty convex subset of $G/K$ then $\rho$ is {\it convex cocompact}.

In the case of $G=\mathrm{SL}(2,\C)$, Minsky \cite{min_ond} showed the set $\X_{PS}(F_{n},G)$ of primitive stable representations is a domain of discontinuity for the action of $\mathrm{Out}(F_r)$ on $\X(F_{r},G)$.  Minsky also proved that $\X_{PS}(F_{n},G)$ strictly contains the set $\X_{CC}(F_{r},G)$ of convex cocompact representations that is also a domain of discontinuity for the action of $\mathrm{Out}(F_r)$ on $\X(F_{r},G)$. When $\Gamma = F_2$ is the fundamental group of the $1$--holed torus $S_{1,1}$, Bowditch \cite{bow_mar}, in the case of quasi-Fuchsian representations, and more generally Tan-Wong-Zhang \cite{tan_gen}, showed that there is a domain of discontinuity $\X_{BQ}(F_{2},G)$ for the action of the mapping class group of $S_{1,1,}$ on the corresponding character variety $\X(F_{2},G)$.  The set $\X_{BQ}(F_{2},G)$ is the set of {\it Bowditch representations} where the comparable conditions (for $G=\SL(2,\C)$ ) from Theorem \ref{Main-characterization} hold true  (although \cite{bow_mar, tan_gen} do not show $\X_{BQ}(F_{2},G)$ strictly contains the convex cocompact representations). Later, Lee-Xu \cite{lee_xu} and Series \cite{series2019,series2020} proved $\X_{PS}(F_{2},G)=\X_{BQ}(F_{2},G)$. Schlich \cite{schlich} generalized this result to cases where $G$ is the group of isometries of a $\delta$-hyperbolic space. We will discuss her results below in more detail, since they are connected with the results we present. 

Following the work of Bowditch, there have been other cases where interesting domains of discontinuity have been described for relative $\mathrm{SL}(2,\C)$-character varieties of surface groups. For example, you can see the description of a domain of discontinuity, the Bowditch set $\X_{BQ}(\pi_1(S), G)$, for the action of the mapping class group $\mathrm{Mod}(S)$ on the variety $\X(\pi_1(S), G)$, by Maloni-Palesi-Tan \cite{mal_ont} when $S$ is $4$--holed sphere and by Maloni-Palesi \cite{MaPa1} when $S$ is the $3$--holed projective plane. In the case when $\Gamma$ is the fundamental group of a hyperbolic $3$--manifold there has also been significant work finding interesting domains of discontinuity as described in this survey of Canary \cite{can_dyn}. When $G$ is a semisimple group of higher rank and without compact factors, Kim-Kim \cite{kim_kim_2020} generalized Minsky's work on primitive-stable representations and showed that they form an open domain of discontinuity for the action of $\mathrm{Out}(\Gamma)$ on $\X(\Gamma,G)$ when $\Gamma$ is a free group of rank at least $2$ or the fundamental group of a closed surface of genus $g\geq 2$.

Motivated by Lubotzky \cite{Lub}, Minsky \cite{min_ond} and Gelander-Minsky \cite{GeMi} discussed primitive-stable and redundant representations in the case of free groups and $G=\mathrm{SL}(2,\C)$. Inspired by their work, one can imagine a decomposition of $\X(\Gamma,G)$ into an open set where the action of $\mathrm{Out}(\Gamma)$ is properly discontinuous and a closed set where the action is chaotic. The only example fully understood where $G$ is {\it not} compact that exemplifies such a dynamical dichotomy is when $\Gamma$ is a free group of rank 2 and $G = \mathrm{SL}(2,\R)$, see Goldman \cite{gol_the}, Goldman-McShane-Stantchev-Tan \cite{gol_dyn} and March\'e-Wolff \cite{MarcheWolff16, MarcheWolff19}.

As summarized in Goldman \cite{Goldman-survey}, when $G$ is compact, (relative) character varieties generally exhibits nontrivial homotopy, and the action of the mapping class group exhibits nontrivial chaotic dynamics. In contrast, when $G$ is non-compact, then $\X(\Gamma_g,G)$ contains open sets (like Teichm\"uller space or more generally Hitchin components) which are contractible and admit properly discontinuous actions.  Philosophically, the action of $\mathrm{Out}(\Gamma)$ on (relative) character varieties exhibits properties of these two extremes.  

\subsection*{Relation with work of Schlich} 

Schlich \cite{schlich} considers representations from the fundamental group $\Gamma = \pi_1(S)$ of the $1$--holed torus $S=S_{1,1}$ into the isometry group $G$ of a geodesic and proper $\delta$--hyperbolic space. Since $\mathrm{P}\SUtwo =\mathrm{Isom}(\mathbf{H}^2_{\C})$ is the isometry group of the complex hyperbolic plane (see Section \ref{sec-ch2}), her result applies to the setting we study. For the character variety $\X = \X(F_2,\SU(2,1))$ Schlich defines two subsets of the character variety: the Bowditch set (using Definition (BQ3)) and the set of primitive-stable representations (generalizing Minsky's definition to this setting).  One of the main results of her thesis and paper shows that the two sets coincides. Since it is not hard to see that the set of primitive-stable representations is a domain of discontinuity for the action of $\Out(F_2)$ on $\X$, she also proves as a corollary of her result, that the set of Bowditch representations also has that property. In her thesis, she also considers the case of $S = S_{0,4}$ a four-holed sphere, and proves that in that setting the set of ``simple-stable'' representations coincides with the set of Bowditch's representations \cite{schlich-thesis}.

The main difference between our work and Schlich's work is in the way Bowditch representations are defined. This also differentiates what are the challenging aspects of the work. We define our Bowditch set explicitly in terms of the character variety using the symmetry and structure understood from \cite{law0,law1}.  Our approach is similar to that of \cite{bow_mar} and its generalizations \cite{tan_gen, mal_ont, MaPa1}, but adapted to representations into $\SU(2,1)$ where the character variety is significantly more complicated. In particular, our definition is well-adapted to computer implementations and testing since it is a definition dependent on only a finite number of conditions. Schlich's work generalizes  to isometry groups of Gromov hyperbolic spaces the result obtained for $\SL(2,\C)$ by Series \cite{series2019,series2020}and Lee-Xu \cite{lee_xu}  that for the free group $F_2$ the notions of Bowditch representations and primitive stable representations are equal. In fact, her result gives a new independent proof of this result for $\SL(2,\C)$.

\subsection*{Acknowledgments} 
The authors acknowledge support from U.S. National Science Foundation grants DMS 1107452, 1107263, 1107367 RNMS: ``Geometric Structures and Representation Varieties'' (the GEAR Network). Lawton was supported by the Simons Foundation. Maloni was partially supported by U.S. National Science Foundation grant DMS-1848346 (NSF CAREER). Lastly, we thank the referees for many helpful suggestions which improved the paper.

\section{Background}\label{s:not}

In this section we fix the notation which we will use in the rest of the paper and give some important definitions. 

\subsection{Combinatorics of simple closed curves on the one-holed torus \texorpdfstring{$S$}{S}}

Let $S= S_{1,1}$ be the one-holed torus. Its fundamental group $\Gamma = \pi_1(S)$ is isomorphic to the non-abelian rank $2$ free group $F_2:=\langle \alpha, \beta \rangle$. Abusing notation, we will let $\alpha$ and $\beta$ represent the (free homotopy class of) loops associated to the two generators so that the peripheral curve $\gamma$ going around the boundary component of $S$ is oriented to be homotopic to $[\alpha,\beta]:=\alpha\beta\alpha^{-1}\beta^{-1}$. 

Let $\Sc$ be the set of free homotopy classes of essential unoriented simple closed curves on $S$. Recall that a curve is \emph{essential} if it is non-trivial and non-peripheral, or equivalently if it does not bound a disc, or a one-holed disk. We will generally omit the word essential from now on. We can identify $\Sc$ to a well-defined subset of $\Gamma/\!\sim$, where the equivalence relation $\sim$ on $\Gamma$ is defined as follows: $g \sim h$ if and only if $g$ is conjugate to $h$ or $h^{-1}$. 

\subsection{Various simplicial complexes associated to \texorpdfstring{$S$}{S}}\label{ss:simple}

\begin{Definition}
The curve complex $\mathcal{C}$ of the 1-holed torus $S$ is defined by setting $k$--simplices to be subsets of $k+1$ distinct (free homotopy classes of) essential simple closed curves in $S$ that pairwise intersect once (up to homotopy).
\end{Definition}

With that definition $\mathcal{C}$ is a $2$--dimensional simplicial complex isomorphic to the Farey complex \cite{Minsky-geom}. 

\begin{figure}
[hbt] \centering
\includegraphics[height=6 cm]{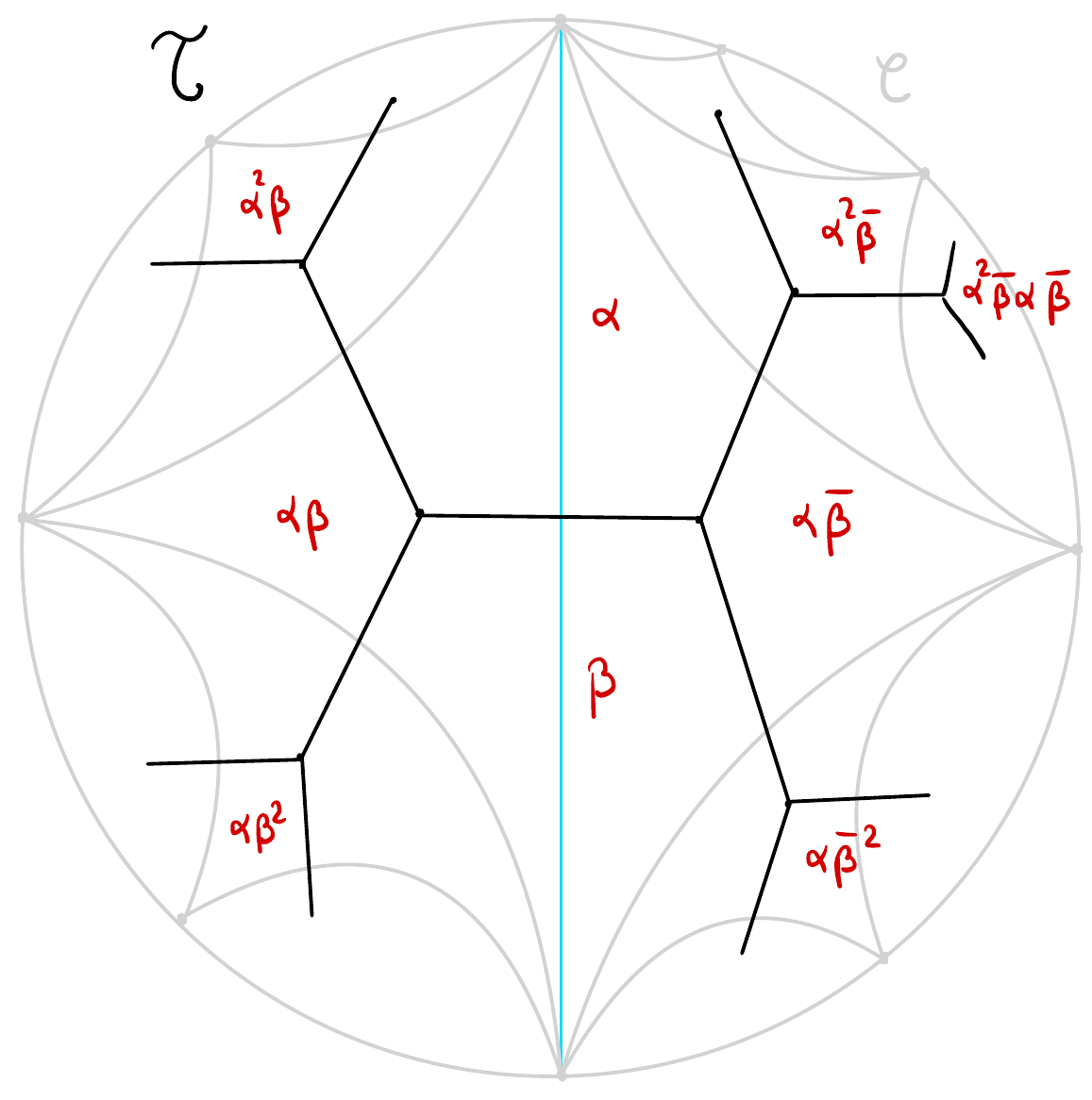}
\caption{The complexes $\mathcal{C}$ (in grey) and $\mathcal{T}$ (in black), and the labeling of the connected components of $\mathbf{H}^2 \setminus \mathcal{T}$ once we fix the generators of $F_2 = \langle \alpha, \beta \rangle$.}
\label{fig:f-t}
\end{figure}

\begin{Definition}
Let $\mathcal{T}$ be the simplicial complex defined by letting the set of $k$--simplices in $\mathcal{T}$, denoted by $\mathcal{T}^{(k)}$, be the set $\mathcal{T}^{(k)} = \mathcal{C}^{(2-k)}$.  The complex $\mathcal{T}$ is called the simplicial dual of $\mathcal{C}$ and is a countably infinite simplicial tree properly embedded in  hyperbolic $2$--space $\HH$ whose vertices all have degree $3$. \end{Definition}

Figure \ref{fig:f-t} illustrates $\mathcal{C}$ and $\mathcal{T}$. Note that $\mathcal{T}$ can also be identified with the graph of ideal triangulations of $S$, where edges corresponds to edge-flips. Since we will not need this identification in the rest of the paper, we will not discuss the details. 

The set $\Omega:=\pi_0(\mathbf{H}^2- \mathcal{T})$ is in one-to-one correspondence with the set of vertices of $\mathcal{C}$ and hence with the set $\Sc$; that is, $$\Omega \cong \mathcal{C}^{(0)} \cong \Sc.$$

Given a system of generators for $F_2$ corresponding to (free homotopy classes) of simple closed curves in $S$, we obtain a labeling of regions in $\Omega$.  For example, with $F_2 = \langle \alpha, \beta \rangle$ we obtain the labeling in Figure \ref{fig:f-t}. As you can see, the labeling is defined from the starting central edge $e = \{\a, \b\}$ corresponding to the intersection of the regions in $\Omega$ labeled $\alpha$ and $\beta$, and is defined, for a certain region $X$,  as the product of the two regions adjacent to $X$ but closer to the initial edge $e$. Note that when doing this product the region $\beta$ needs to be considered as $\beta$ on the ``left side" of the edge $e$ and $\beta^{-1}$ on the ``right side" of the edge $e$.

\begin{figure}[hbt] \centering
\includegraphics[height=7 cm]{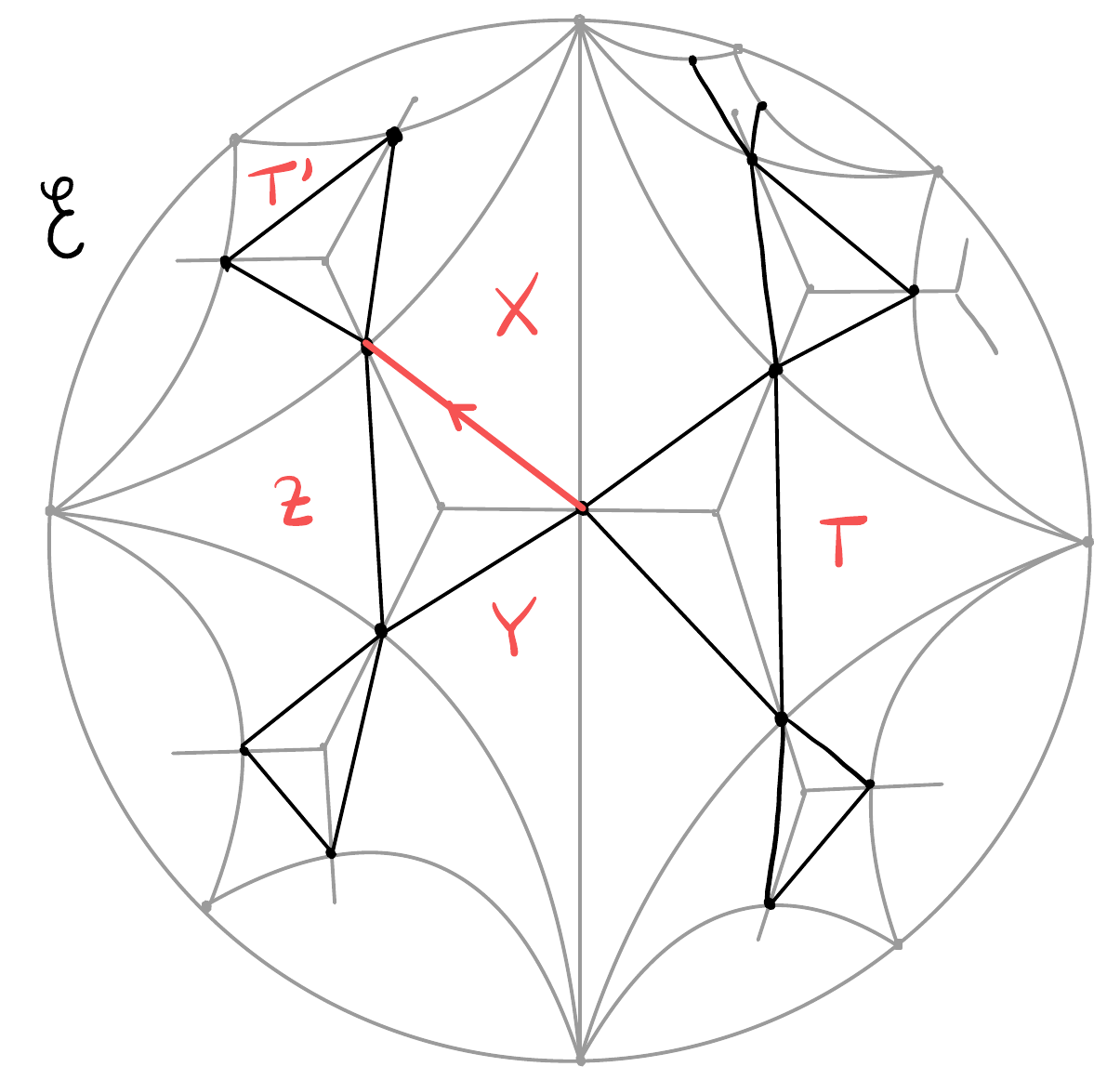}
\caption{Oriented edge $(X, Y, Z; T \to T')$. If you forget the orientation, the same red edge can be denoted $(X; Y, Z)$.}
\label{fig:e_oriented}
\end{figure}

\begin{Definition}
The \textit{edge graph} $\mathcal{E}$ is the graph whose vertices correspond to edges of $\mathcal{T}$, and whose edges correspond to adjacency. More precisely, if we denote $\mathcal{E}^{(k)}$ the set of $k$--simplices, we have that $\mathcal{E}^{(0)} = \mathcal{T}^{(1)} = \mathcal{C}^{(1)}$ and edges of $\mathcal{E}$ corresponds to edges that are the boundary of the same triangle in $\mathcal{C}^{(2)}$. 
\end{Definition}

Note that $\mathcal{E}$ is quasi-isometric to $\mathcal{T}$. In fact, one can see that $\mathcal{E}$ is a ``trivalent tree of triangles" in the sense that it is a graph made of triangles (one for each triangle $T$ in $\mathcal{C}^{(2)}$ or, equivalently, one for each vertex of $\mathcal T$).

\begin{figure}[hbt] \centering
\includegraphics[height=7 cm]{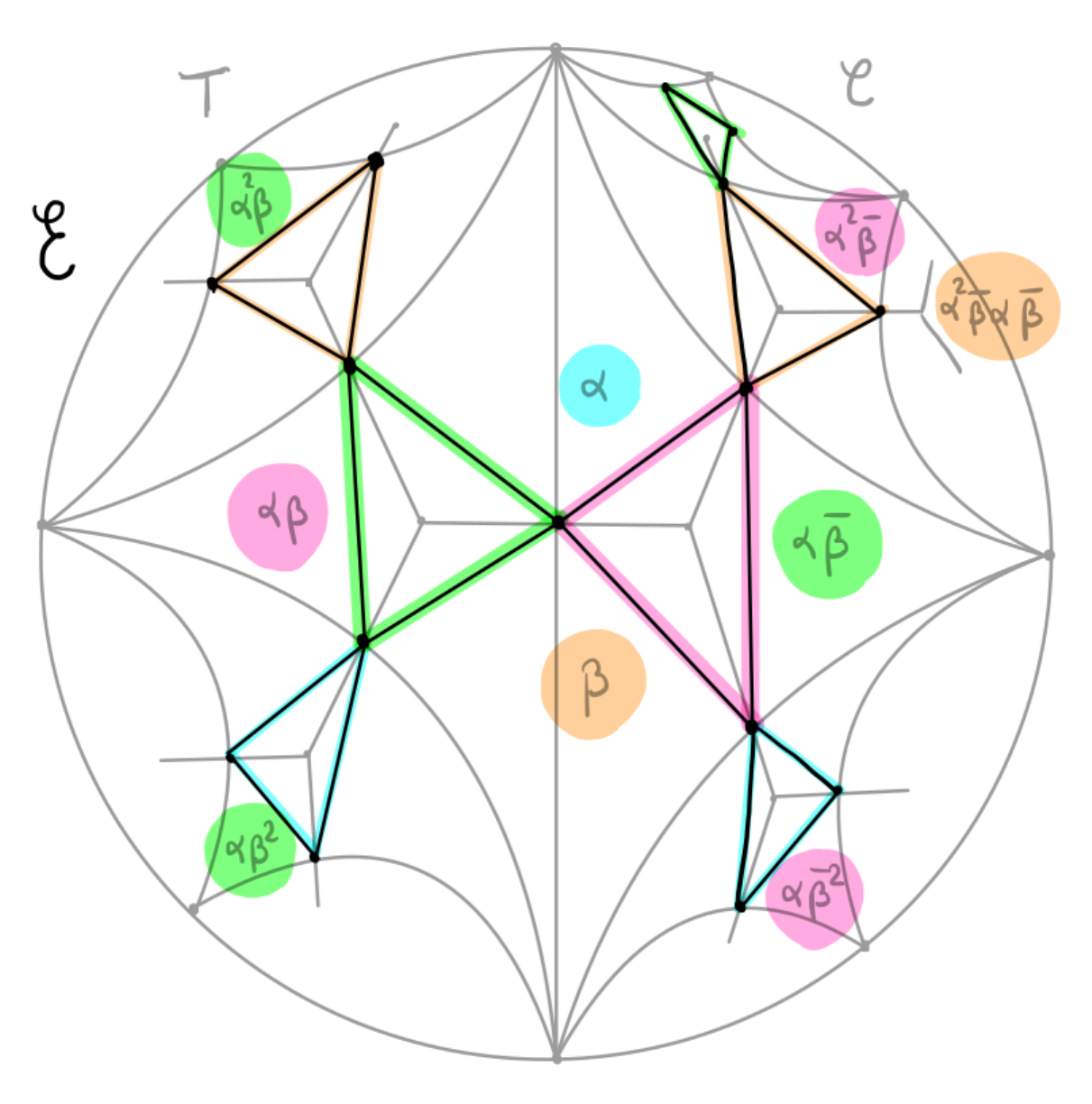}
\caption{The complex $\mathcal{E}$ with the coloring of its edges and regions of $\HH \setminus \mathcal{T}$.}
\label{fig:e}
\end{figure}

Since each edge of $\mathcal{T}$ is uniquely determined by two (free homotopy classes of) simple closed curves intersecting once, we will denote vertices of $\mathcal E$ as $v = (\alpha, \beta) \in \mathcal{E}^{(0)}$. It will be convenient for some proofs to denote such a vertex $v$ as $(\a,\b;\g,\d)$, where $\g,\d$ are regions adjacent to $(\a,\b)$ considered as an element in $\mathcal{T}^{(1)}$. Each edge of $\mathcal E$ is uniquely determined by the choice of a triangle $T$ in $\mathcal{C}^{(2)}$ plus the choice of one of its vertices, so we will denote edges of $\mathcal E$ as $e = (\alpha; \beta, \gamma) \in \mathcal{E}^{(1)}$. We  will also have to denote oriented edges, so in that case we will use the notation $(\alpha, \beta, \gamma; \delta, \delta')$. See Figure \ref{fig:e_oriented}.

Additionally, we will have to color the edges of $\mathcal{E}$ with four colors so that the edges of each triangle have the same color and each triangle is adjacent to triangles with distinct coloring. This coloring can be extended to color the vertices of $\mathcal{C}$ (or equivalently to the connected components of $\HH \setminus \mathcal{T}$).  This is possible since each such region is bounded by triangles colored with three distinct colors.  Therefore, the region will be colored in the color that does not appear in the triangles going around the region.  See Figure \ref{fig:e}.

\subsection{Complex Hyperbolic Geometry \texorpdfstring{$\HH_{\mathbb{C}}$}{H2C}}\label{sec-ch2} 

Let $\C^{n,1}\cong \C^n\times \C$ be the $(n+1)$-dimensional complex vector space with Hermitian pairing defined for vectors $(\mathbf{x},x_{n+1})\in \C^{n+1}$ by $$\langle (\mathbf{x},x_{n+1}),(\mathbf{y},y_{n+1})\rangle:=\left(\sum_{i=1}^{n}x_i\overline{y_i}\right)-x_{n+1}\overline{y_{n+1}}.$$  We say a vector $\mathbf{v}$ is {\it negative} if and only if $\langle \mathbf{v},\mathbf{v}\rangle<0$.  Let $\mathbf{P}(\C^{n,1}):=(\C^{n,1}-\{\mathbf{0}\})/\C^*$ be the space of lines in $\C^{n,1}$ through the origin. Then {\it complex hyperbolic space} of dimension $n$ is defined to be $\mathbf{H}_\C^n:=\{[\mathbf{v}]\in \mathbf{P}(\C^{n,1})\ |\ \langle \mathbf{v},\mathbf{v}\rangle<0\}$.

There is a natural copy of $\C^n$ in $\mathbf{P}(\C^{n,1})$ given by $$\mathbb{A}:=\{[(\mathbf{v},v_{n+1})]\in \mathbf{P}(\C^{n,1})\ |\ v_{n+1}\not=0\}.$$  The biholomorphism $\varphi:\C^n\to \mathbb{A}$ is given by $\varphi(\mathbf{v})=[(\mathbf{v},1)]$.  We observe that $\mathbf{H}_\C^n\subset \mathbb{A}$ since if $v_{n+1}=0$ then $\langle \mathbf{v},\mathbf{v}\rangle>0$.  Then restricting $\varphi^{-1}$ to $\mathbf{H}_\C^n$ we see that $\mathbf{H}_\C^n$ is biholomorphic to the unit ball in $\C^n$ since $\langle (\mathbf{v},1),(\mathbf{v},1)\rangle<0$ if and only if $\sum_{i=1}^{n}v_i\overline{v_i}<1$.

Consequently, the boundary of $\mathbf{H}_\C^n$ is homeomorphic to the unit sphere in $\C^n$: $$\partial\mathbf{H}_\C^n=\{[\mathbf{v}]\in \mathbf{P}(\C^{n,1})\ |\ \langle \mathbf{v},\mathbf{v}\rangle=0\}\cong S^{2n-1}.$$

We let $\mathrm{U}(n,1)$ be the subgroup of $\mathrm{GL}(n+1,\C)$ consisting of matrices $A$ such that  $$\langle A(\mathbf{x},x_{n+1}),A(\mathbf{y},y_{n+1})\rangle=\langle (\mathbf{x},x_{n+1}),(\mathbf{y},y_{n+1})\rangle.$$  The image of $\mathrm{U}(n,1)$ in $\mathrm{PGL}(n+1,\C)$, denoted $\mathrm{PU}(n,1)$, is the group of biholomorphisms of $\mathbf{H}_\C^n$ (see \cite{gol-chyp}).

$\mathrm{PU}(n,1)$ acts transitively on $\mathbf{H}_\C^n$ with stabilizer isomorphic to $\mathrm{U}(n)$.  Thus, we see that complex hyperbolic space is a homogeneous space $$\mathbf{H}_\C^n\cong \mathrm{PU}(n,1)/\mathrm{U}(n)\cong\mathrm{SU}(n,1)/\mathrm{S}(\mathrm{U}(n)\times \mathrm{U}(1)).$$

\subsection{\texorpdfstring{$\SLthreeC$}{SL3C} and \texorpdfstring{$\SUtwo$}{SU21}}

Let $\SLthreeC$ be the Lie group of $3\times 3$ complex matrices of unit determinant, which is also a complex affine variety since it is the zero locus of the determinant polynomial minus $1$.  

We review a few facts about two real forms of $\SLthreeC$.  First, $$\SU(3)=\{A\in \SLthreeC\ |\ A^{-1}=\overline{A}^\dagger\},$$ where $\dagger$ means ``transpose".  Letting $$J=\left(\begin{array}{ccc}1&0&0\\0&1&0\\0&0&-1\end{array}\right),$$ we secondly have $$\SU(2,1)=\{A\in\SLthreeC\ |\ A^{-1}=J\overline{A}^\dagger J\}.$$ Consequently, $\tr(A^{-1})=\overline{\tr(A)}$ for any $A\in \SU(3)\cup \SU(2,1)$.

The following lemma is well-known (see for example \cite[Exercise 3.1.1]{gol-chyp}), but we include a proof since it is an essential fact in our proof of Theorem \ref{Main2}. 

\begin{Lemma}\label{lem:maxcomp}
Any maximal compact subgroup of $\SU(2,1)$ is isomorphic to $\mathrm{U}(2)$.  In addition, the center of $\SU(2,1)$ is isomorphic to $\Z_3$.
\end{Lemma}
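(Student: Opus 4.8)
The plan is to identify the maximal compact subgroups of $\SU(2,1)$ using the general structure theory for real forms. First I would recall that $\SU(2,1)$ acts transitively on $\mathbf{H}_\C^2$ (as stated in the excerpt) and that the stabilizer of a point — say the point $[(\mathbf 0,1)]$ corresponding to the origin of the ball model — is precisely the subgroup preserving the orthogonal decomposition $\C^{2,1}=\C^2\oplus\C$ into the positive-definite and negative-definite parts. Concretely, a matrix $A\in\SU(2,1)$ fixing this line must be block diagonal of the form $\mathrm{diag}(B,d)$ with $B\in\mathrm{U}(2)$, $d\in\mathrm{U}(1)$, and $\det(B)\,d=1$; hence the stabilizer is $\mathrm{S}(\mathrm{U}(2)\times\mathrm{U}(1))\cong\mathrm{U}(2)$ (the isomorphism sending $\mathrm{diag}(B,d)\mapsto B$, with inverse $B\mapsto\mathrm{diag}(B,\det(B)^{-1})$). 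This subgroup is compact, being a closed bounded subgroup of $\GL(3,\C)$.

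Next I would argue this subgroup is maximal compact and that every maximal compact subgroup is conjugate to it. The cleanest route is the standard fixed-point argument: any compact subgroup $K\subset\SU(2,1)\subset\mathrm{Isom}(\mathbf H_\C^2)$ preserves a finite measure on $\mathbf H_\C^2$ obtained by pushing forward Haar measure on $K$ via the orbit map through a chosen point, and by convexity (the barycenter construction, or the fact that $\mathbf H_\C^2$ is a $\mathrm{CAT}(0)$ space / Hadamard manifold) $K$ fixes a point $p\in\mathbf H_\C^2$. Therefore $K$ is contained in the stabilizer of $p$, which is conjugate to $\mathrm{S}(\mathrm{U}(2)\times\mathrm{U}(1))\cong\mathrm{U}(2)$. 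This shows simultaneously that $\mathrm{U}(2)$ is maximal compact and that all maximal compacts are isomorphic (indeed conjugate) to it. Alternatively, one can simply invoke the Cartan/Iwasawa theory for semisimple Lie groups, which guarantees maximal compacts exist, are all conjugate, and for $\SU(p,q)$ equal $\mathrm{S}(\mathrm{U}(p)\times\mathrm{U}(q))$; but the geometric argument is self-contained given the complex hyperbolic setup already developed in the excerpt.

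For the statement about the center, I would compute directly. A central element $A\in\SU(2,1)$ must commute with all of $\SU(2,1)$; since $\SU(2,1)$ acts irreducibly on $\C^3$ (its defining representation is irreducible — it contains, for instance, enough elements to leave no proper subspace invariant), Schur's lemma forces $A=\zeta I$ for a scalar $\zeta\in\C^*$. The determinant condition gives $\zeta^3=1$, and one checks $\zeta I$ indeed lies in $\SU(2,1)$ (it satisfies $(\zeta I)^{-1}=\overline\zeta I = J\,\overline{\zeta I}^\dagger J$ since $|\zeta|=1$ and $J^2=I$). Hence the center is $\{\zeta I : \zeta^3=1\}\cong\Z_3$.

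The main obstacle is not any single hard computation but rather deciding how much to prove from scratch versus cite: the conjugacy of maximal compact subgroups is a genuine theorem (Cartan's fixed point theorem in nonpositive curvature, or the Iwasawa decomposition), so the cleanest exposition either invokes it as a black box or spends a few lines on the barycenter/fixed-point argument in $\mathbf H_\C^2$. I would opt for the latter to keep the paper self-contained, being careful that the measure-pushforward-plus-barycenter argument is stated correctly (using that closed balls in $\mathbf H_\C^2$ are compact and convex, so a circumcenter exists and is unique). The irreducibility input needed for the center computation is elementary and I would dispatch it in one sentence.
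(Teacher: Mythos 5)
Your proof is correct, and for the center it takes a genuinely different route than the paper. For the maximal compact part, you and the paper reach the same conclusion --- $\mathrm{S}(\mathrm{U}(2)\times\mathrm{U}(1))\cong\mathrm{U}(2)$ --- but you spell out why this subgroup is maximal (via the fixed-point/barycenter argument in $\mathbf H_\C^2$, or equivalently Cartan's theorem), whereas the paper simply cites the general fact that $\mathrm{U}(p)\times\mathrm{U}(q)$ is maximal compact in $\mathrm{U}(p,q)$ and that all maximal compacts are conjugate; your version is more self-contained but longer. For the center, the difference is more substantive: you argue that the defining representation of $\SU(2,1)$ on $\C^3$ is irreducible and invoke Schur's lemma to force a central element to be a scalar, then use $\det=1$; the paper instead observes that the commutation relation $AZ=ZA$ is algebraic in $A$, hence extends from $\SU(2,1)$ to its Zariski closure $\SL(3,\C)$, reducing the problem to the known center of $\SL(3,\C)$. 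Both are clean; the Zariski-density argument avoids having to establish irreducibility (the one minor gap you flagged yourself), while your Schur's-lemma route is more elementary and does not require knowing that $\SU(2,1)$ is Zariski-dense in $\SL(3,\C)$.
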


\begin{proof}
In general, the group $\mathrm{U}(p)\times\mathrm{U}(q)$ is a maximal compact in $\mathrm{U}(p,q)$ and so $\mathrm{S}(\mathrm{U}(p)\times\mathrm{U}(q))$ is a maximal compact in $\SU(p,q)$.  In the case of $\SU(2,1)$ we have that $\mathrm{S}(\mathrm{U}(2)\times\mathrm{U}(1))=\{(A,\lambda)\in \mathrm{U}(2)\times \mathrm{U}(1)\ |\ \mathrm{Det}(A)\lambda=1\}$.  On the other hand, this group is isomorphic to $\mathrm{U}(2)$ simply by sending $(A,\lambda)\mapsto A$.  Since all maximal compact subgroups are isomorphic, the first result follows.

Now we compute the center.  Suppose $Z\in \SU(2,1)$ is in the center $Z(\SU(2,1))\leq \SU(2,1)$.  Then for all $A\in \SU(2,1)$ we have $AZ=ZA$.  On the other hand this equation, which is necessarily algebraic, holds on the Zariski closure of $\SU(2,1)$, which is $\SL(3,\C)$.  So $Z$ is a central matrix in $\SL(3,\C)$ and so a scalar multiple of the identity.  The only such matrices are $\{\omega I, \omega^2 I, I\}\cong \Z_3$, where $\omega$ is a primitive cube root of unity.
\end{proof}

An automorphism of complex hyperbolic space $\mathbf{H}^2_\C\subset \mathbb{P}(\C^{2,1})$, represented by an element in $ \mathrm{PSU}(2,1):=\SU(2,1)/Z(\SU(2,1))$, lifts to an element of $\mathrm{SU}(2,1)$, and the fixed points in $\mathbf{H}^2_\C\cup\partial\mathbf{H}^2_\C$ correspond to eigenvectors of this lift.  For each such automorphism, there are three lifts corresponding to the cubic roots of unity $\zeta_3\subset S^1\subset \C$.  The following definition is taken from \cite[Section 6.2.1]{gol-chyp}.
\begin{Definition}
An element in $\mathrm{SU}(2,1)$ is called:
\begin{enumerate}
	\item {\it elliptic} if it has a fixed point in $\mathbf{H}^2_\C$; 
	\item {\it parabolic} if it has a unique fixed point on $\partial\mathbf{H}^2_\C$; 
	\item {\it unipotent} if it is parabolic and all the eigenvalues are equal;
	\item {\it loxodromic} if it fixes a unique pair of points on $\partial\mathbf{H}^2_\C$.
\end{enumerate}
\end{Definition}

Given a matrix $A\in\rm{SU} (2,1)$ such that $t = \tr(A)$, its characteristic polynomial, whose zeros are the eigenvalues of $A$, is the polynomial $\chi_A(x) = x^3 - t x^2 + \overline t x -1 $. The resultant of $\chi_A$ and its derivative tells us when $\chi_A$ has multiple roots, and hence when $A$ has an eigenvalue with multiplicity greater than $1$. Let $f\co\C \to \R$ be defined by
$$f(t) = \mathrm{Res} (\chi_A , \chi_A') =  |t|^4 - 8 \mathrm{Re} (t^3) + 18 |t|^2 - 27.$$
This function vanishes on a deltoid centered at $0$ of radius $3$, see Figure \ref{fig:deltoid}.

\begin{figure}
\includegraphics[height=6cm]{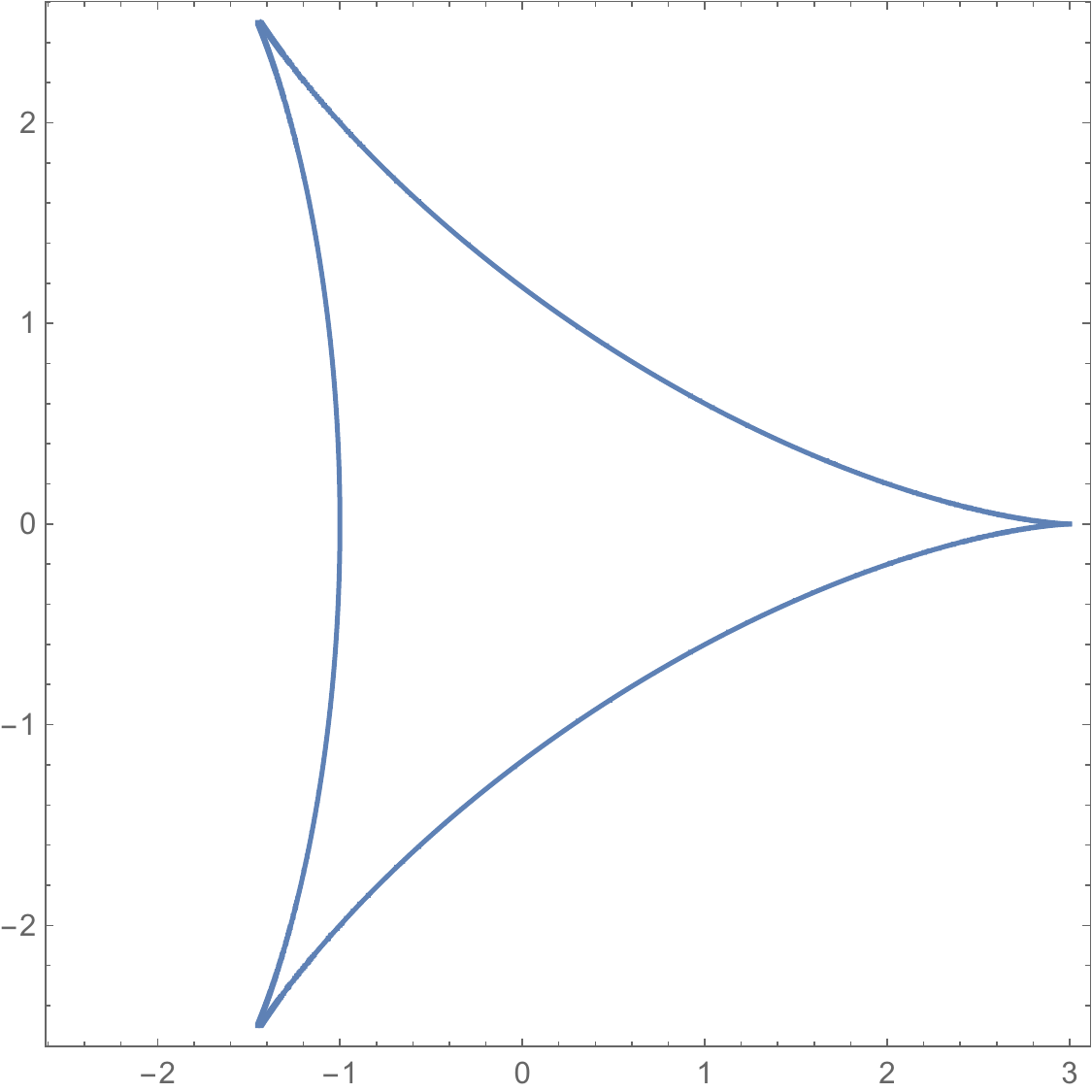}
\caption{$f(t)=0$}
\label{fig:deltoid}
\end{figure}

\begin{Theorem}[Theorem 6.2.4, \cite{gol-chyp}]
The map $\tr\co\mathrm{SU}(2,1)\to\C$ is surjective.  If $A,B\in \mathrm{SU}(2,1)$ are loxodromic and $\tr(A)=\tr(B)$, then $A$ and $B$ are conjugate.  

In addition, for any $A\in\mathrm{SU}(2,1)$ with $t = \tr(A)$ we have:
\begin{enumerate}                                                                                                                                             \item[$(1)$] $A$ is elliptic with distinct eigenvalues if and only if $f(t)<0$;
\item[$(2)$] $A$ is loxodromic if and only if $f(t)>0$;
\item[$(3)$] $A$ is parabolic but not unipotent if and only if $A$ is not elliptic and $t\in f^{-1}(0)-3\zeta_3$; 
\item[$(4)$] $A$ is a complex-reflection if and only if $A$ is elliptic and $t\in f^{-1}(0)-3\zeta_3$;
\item[$(5)$] $A$ is an unipotent automorphism of $\mathbf{H}^2_\C$ or the identity map if and only if $t\in 3\zeta_3$.
\end{enumerate}
\end{Theorem}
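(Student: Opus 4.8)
The plan is to reduce the whole statement to the eigenvalue theory of $\mathrm{SU}(2,1)$ together with the sign of the resultant $f$. The first step is to record the characteristic polynomial: for $A\in\mathrm{SU}(2,1)$ with $\tr(A)=t$ one has $\det(A)=1$ and, as noted above, $\tr(A^{-1})=\overline t$, so using $\tr(\wedge^2 A)=\det(A)\tr(A^{-1})$ the characteristic polynomial is exactly $\chi_A(x)=x^3-tx^2+\overline t\,x-1$. The crucial remark I would isolate is that the multiset of roots of $\chi_A$ is invariant under $\lambda\mapsto 1/\overline\lambda$: conjugating the relation $\chi_A(\lambda)=0$ and clearing denominators gives $\chi_A(1/\overline\lambda)=0$. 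Since $\lambda\mapsto 1/\overline\lambda$ is an involution on three roots, either all three eigenvalues lie on the unit circle, or exactly one does and the other two form a pair $\lambda,1/\overline\lambda$ with $|\lambda|\neq1$; in particular a repeated root forces all three eigenvalues onto the unit circle.

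Next comes the sign computation. Writing $P=\prod_{i<j}(\lambda_i-\lambda_j)$, the function $f$ is (up to a positive constant) the discriminant $P^2$ of $\chi_A$, so $f(t)=0$ precisely when $A$ has a repeated eigenvalue. Using $\lambda_1\lambda_2\lambda_3=1$ and the symmetry above, I would check that in the ``loxodromic pattern'' $P$ is real while for three eigenvalues on the circle $P=-8i\prod_{j<k}\sin\frac{\theta_j-\theta_k}{2}$ is purely imaginary; hence $f(t)>0$ in the first case (with distinct eigenvalues) and $f(t)<0$ in the second. To turn this into (1) and (2) I would invoke two elementary facts about the Hermitian form: distinct eigenvalues have eigenvectors that are $\langle\cdot,\cdot\rangle$-orthogonal whenever $\lambda_i\overline{\lambda_j}\neq1$, and a non-degenerate form restricted to an invariant subspace is again non-degenerate. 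In the all-on-the-circle case the three eigenvectors are then pairwise orthogonal and, the form having signature $(2,1)$, exactly one is negative, i.e. a fixed point in $\mathbf{H}^2_\C$, so $A$ is elliptic; in the loxodromic pattern the eigenvectors for $\lambda$ and $1/\overline\lambda$ have norms scaled by $|\lambda|^2\neq1$ and are therefore null, the third eigenvector is positive, and these turn out to be the only boundary (and no interior) fixed points, so $A$ is loxodromic. The converses use Lemma \ref{lem:maxcomp}: an elliptic element lies in a conjugate of the compact stabilizer $\mathrm{U}(2)$, hence is semisimple with unit-modulus eigenvalues, so $f(t)\le0$; and a loxodromic element, as just seen, has an eigenvalue off the circle, so $f(t)>0$.

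For the locus $f(t)=0$ I would split on whether $A$ is diagonalizable. By semisimplicity of elliptic elements, a non-diagonalizable $A$ with $f(t)=0$ is parabolic (its Jordan block gives a boundary fixed point, and a second one would force an invariant hyperbolic plane, hence an interior fixed point or an off-circle eigenvalue), while a diagonalizable $A$ with $f(t)=0$ is elliptic. Then $t\in 3\zeta_3$ is equivalent to $\chi_A(x)=(x-\zeta)^3$ for some $\zeta\in\zeta_3$ (using $\overline\zeta=\zeta^2$), i.e. to all eigenvalues being equal: in the diagonalizable case this means $A=\zeta I$ is central, and in the non-diagonalizable case it means $\zeta^{-1}A$ is unipotent, i.e. $A$ is a unipotent automorphism, giving (5). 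Removing $3\zeta_3$ from $f^{-1}(0)$ thus leaves, among diagonalizable (hence elliptic) elements, exactly those of eigenvalue type $\lambda,\lambda,\mu$ with $\lambda\neq\mu$ on the circle — the complex reflections, which is (4) — and among non-diagonalizable (hence non-elliptic) elements, exactly the parabolics with two distinct eigenvalues, i.e. parabolic but not unipotent, which is (3). Surjectivity of $\tr$ falls out along the way: given $t$, take the roots of $x^3-tx^2+\overline t x-1$ and, using the $\lambda\mapsto 1/\overline\lambda$ dichotomy, write down an explicit diagonal matrix preserving a standard signature-$(2,1)$ Hermitian form with those eigenvalues, hence of trace $t$. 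For the conjugacy statement, when $f(t)\neq0$ the polynomial $\chi_A$ has three distinct roots, so any $A,B$ with $\tr(A)=\tr(B)=t$ are semisimple with the same eigenvalues; the orthogonality and pairing relations above give, for each of $A$ and $B$, an eigenbasis in which the form has a fixed standard Gram matrix, and the change of basis between these normalized eigenbases lies in $\mathrm{U}(2,1)$ and conjugates $A$ to $B$; rescaling it by a cube root of unity lands it in $\mathrm{SU}(2,1)$.

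I expect the main obstacle to be not any single deep point but the careful bookkeeping at $f^{-1}(0)$ and in the conjugacy argument: separating the central elements, the two geometric types of complex reflection (fixing a point versus fixing a complex geodesic), the non-unipotent parabolics and the unipotents, checking in each case that the eigenspaces carry the correct signature with respect to $J$, and making sure the conjugation is realized inside $\mathrm{SU}(2,1)$ rather than only in $\mathrm{GL}(3,\C)$. These are precisely the places where sign and normalization errors tend to hide.
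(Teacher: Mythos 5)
The paper does not prove this statement; it is cited verbatim from Goldman's book (Theorem~6.2.4 of \cite{gol-chyp}), so there is no in-paper argument to compare yours against. Your reconstruction of the classification by the sign of $f$ is sound: the symmetry $\lambda\mapsto 1/\overline\lambda$ of the roots of $\chi_A$, the consequent dichotomy between ``all three on the unit circle'' and ``one on the circle plus a pair $\lambda,1/\overline\lambda$,'' the sign computation via $\Delta=\prod_{i<j}(\lambda_i-\lambda_j)^2$, and the use of $\langle Av_i,Av_j\rangle=\lambda_i\overline{\lambda_j}\langle v_i,v_j\rangle$ to determine which eigenlines are null/positive/negative all work as you describe, and the bookkeeping at $f=0$ via diagonalizability and $t\in3\zeta_3$ is correct. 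The surjectivity construction is also fine.

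The genuine gap is in the conjugacy clause, and it is exactly at the place you flag as ``where sign and normalization errors tend to hide.'' For $f(t)>0$ your argument does close up: the multiset of eigenvalues singles out $\lambda$ ($|\lambda|>1$), $1/\overline\lambda$, and $\mu$ ($|\mu|=1$), and the Gram matrix in the corresponding eigenbasis can be normalized to the standard $\left(\begin{smallmatrix}0&1&0\\1&0&0\\0&0&1\end{smallmatrix}\right)$, so $A$ and $B$ are $\mathrm{U}(2,1)$-conjugate, and a unit-scalar rescaling lands the conjugator in $\mathrm{SU}(2,1)$. But for $f(t)<0$ the phrase ``the form has a fixed standard Gram matrix'' is precisely what fails: the three eigenlines are pairwise orthogonal with Gram matrix $\mathrm{diag}(\pm1,\pm1,\pm1)$ of signature $(2,1)$, and there is no way to read off from $t$ alone which eigenvalue carries the negative line. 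Concretely, $A=\mathrm{diag}(1,i,-i)$ and $A'=\mathrm{diag}(i,-i,1)$ both lie in $\mathrm{SU}(2,1)$ for $J=\mathrm{diag}(1,1,-1)$ and have $\tr=1$, $f(1)=-16<0$, yet any $g$ with $gAg^{-1}=A'$ would have to send a positive eigenline to a negative one, so no such $g$ exists in $\mathrm{U}(2,1)$. Thus the conjugacy-from-trace assertion is false on $\{f<0\}$ — the conjugacy class of a regular elliptic is determined by the eigenvalues \emph{together with} the marking of which one is negative, giving generically three $\mathrm{SU}(2,1)$-classes per trace value — and the paper's quotation of Goldman's theorem appears to overstate it. Your proof should restrict the conjugacy claim to the loxodromic locus (or, more invariantly, should add the datum of the signature marking on eigenlines), where your argument is complete.
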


The following result is a corollary to the above theorem.
\begin{Corollary}[\cite{gol-chyp}]\label{corol_f}
	Let $A\in\mathrm{SU}(2,1)$ with eigenvalues $\lambda_1,\lambda_2,\lambda_3$, and let $t = \tr(A)$. We have:
\begin{enumerate}
\item[$(1)$] If $f(t) > 0$, then $A$ is loxodromic, and its eigenvalues are all distinct: one is inside, one is outside, and one is on the unit circle.
\item[$(2)$] If $f(t) = 0$, then $A$ has an eigenvalue of multiplicity at least $2$, and all eigenvalues are on the unit circle.
\item[$(3)$] If $f(t) < 0$, then $A$ is regular elliptic, and has three distinct eigenvalues satisfying $|\lambda_i | = 1$.
\end{enumerate}
\end{Corollary}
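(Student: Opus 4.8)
The plan is to derive all three statements from the preceding Theorem, which classifies $A$ according to the sign of $f(t)$, together with two elementary observations. First, $f(t)$ equals, up to a nonzero constant, the discriminant of $\chi_A(x)=x^3-tx^2+\overline t\,x-1$; hence $f(t)=0$ precisely when $\chi_A$ has a repeated root, i.e. when $A$ has an eigenvalue of algebraic multiplicity at least $2$, and $f(t)\neq 0$ precisely when $\lambda_1,\lambda_2,\lambda_3$ are pairwise distinct. Second, the spectrum of every $A\in\SUtwo$ is invariant under the involution $\iota\co\lambda\mapsto 1/\overline\lambda$; this can be seen from $A^{-1}=J\overline A^{\dagger}J$ and $J^2=I$ (so $\overline A^{\dagger}$ is conjugate to $A^{-1}$, forcing $\{\overline{\lambda_i}\}=\{1/\lambda_i\}$), or directly from the implication $\chi_A(\lambda)=0\Rightarrow\chi_A(1/\overline\lambda)=0$, which one checks by conjugating the equation $\chi_A(\lambda)=0$ and dividing by $\overline\lambda^{3}$ (note $\lambda\neq 0$ since $\prod_i\lambda_i=1$).

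From the second observation I would extract a modulus dichotomy. Since $\iota$ is an involution of the three-element multiset of eigenvalues, it has at least one fixed point, and a fixed point satisfies $\lambda=1/\overline\lambda$, i.e. $|\lambda|=1$. If every eigenvalue has modulus $1$ we are done with the modulus statements. Otherwise, an eigenvalue $\lambda$ with $|\lambda|\neq 1$ is not fixed by $\iota$, so $\iota$ exchanges it with an eigenvalue of modulus $1/|\lambda|$ and fixes the third, which therefore lies on the unit circle; in this case the three moduli are $r,\,1/r,\,1$ with $r>1$, so the eigenvalues are automatically distinct, and exactly one lies strictly inside, one strictly outside, and one on the unit circle.

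Now I would treat the three cases. If $f(t)<0$, the Theorem gives that $A$ is elliptic with distinct eigenvalues; an elliptic element fixes a point of $\mathbf{H}^2_\C$, hence lies in a maximal compact subgroup, which is a copy of $\mathrm{U}(2)$ by Lemma \ref{lem:maxcomp}, so all $|\lambda_i|=1$, and they are distinct by the Theorem (equivalently since $f(t)\neq0$). If $f(t)=0$, then $\chi_A$ has a repeated root, so some eigenvalue has multiplicity $\geq 2$; the ``modulus $\neq 1$'' branch of the dichotomy would force three distinct moduli and hence three distinct eigenvalues, a contradiction, so all $|\lambda_i|=1$. If $f(t)>0$, the Theorem gives that $A$ is loxodromic; since an element all of whose eigenvalues have modulus $1$ is elliptic (when semisimple) or parabolic (otherwise), a loxodromic element must have an eigenvalue off the unit circle, so the ``modulus $\neq 1$'' branch of the dichotomy applies and yields exactly one eigenvalue inside, one outside, and one on the unit circle.

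The only genuinely delicate point — and where I expect the work to lie — is confirming that the modulus dichotomy is compatible with, and in fact refines, the elliptic/parabolic/loxodromic trichotomy invoked above: one must know that an element of $\SUtwo$ with all unit-modulus eigenvalues is elliptic if semisimple and parabolic otherwise, and that an element with an off-circle eigenvalue is loxodromic. These are standard facts about isometries of $\mathbf{H}^2_\C$ underpinning Theorem 6.2.4, for which I would cite \cite{gol-chyp}; the remainder is routine bookkeeping with the Vieta relations $\lambda_1+\lambda_2+\lambda_3=t$, $\lambda_1\lambda_2+\lambda_2\lambda_3+\lambda_3\lambda_1=\overline t$, $\lambda_1\lambda_2\lambda_3=1$ and the symmetry $\iota$.
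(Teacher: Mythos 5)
The paper does not actually prove this Corollary: it is attributed to \cite{gol-chyp} and presented as an immediate consequence of the Theorem that precedes it, with no argument supplied. So there is no proof of the paper's to compare against, and your reconstruction has to be judged on its own. It is correct, and is almost certainly the intended argument. Your two preliminary observations are exactly right: $f$ is (up to sign convention for $\mathrm{Res}$) the discriminant of $\chi_A$, and in fact a direct expansion of $\prod_{i<j}(\lambda_i-\lambda_j)^2$ via the Vieta relations gives $f(t)=\mathrm{Disc}(\chi_A)$ on the nose; and the spectrum is $\iota$-invariant by the palindromic-conjugate symmetry of $\chi_A$. One place worth tightening: when you invoke the fixed-point argument for the involution $\iota$ on the multiset of eigenvalues and then say ``$\iota$ exchanges [$\lambda$] with an eigenvalue of modulus $1/|\lambda|$ and fixes the third,'' you are implicitly using that $\iota$ preserves multiplicities (which it does, since it is induced by a polynomial identity on $\chi_A$). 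That is what rules out a repeated off-circle eigenvalue — a root of multiplicity two off the unit circle would force its $\iota$-image to also have multiplicity two, exceeding degree three — and it is exactly what makes your case $f(t)=0$ go through. Spelling that out would close the only small gap. The genuinely nontrivial input — that an element of $\SUtwo$ with all unit-modulus eigenvalues is elliptic or parabolic, and one with an off-circle eigenvalue is loxodromic — you have correctly isolated and sourced to \cite{gol-chyp}; given that the Corollary itself is a citation, that is appropriate. (The ``off-circle $\Rightarrow$ loxodromic'' direction can also be seen directly: for $|\lambda|\neq 1$ the identity $|\lambda|^2\langle v,v\rangle=\langle v,v\rangle$ forces the eigenvector to be isotropic, so one gets two fixed boundary points and, since there is no unit-modulus eigenvector spanning a negative line, no fixed interior point.)
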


\subsubsection{\texorpdfstring{$\SLthreeC$}{SL3C}-Character Variety}\label{sl3-charvar}

Let $S_{1,1}$ be the one-holed torus.  The fundamental group of $S_{1,1}$ is isomorphic to a rank $2$ free group $$F_2:=\langle \alpha, \beta\rangle=\langle \alpha, \beta, \gamma\ |\ [\alpha, \beta]=\gamma\rangle,$$ where $\gamma$ corresponds to a loop around the boundary circle and $\alpha,\beta$ are loops of longitude and latitude in the torus, as we discussed in Section \ref{ss:simple}.

The set of group homomorphisms $\hm(F_2,\SLthreeC)$ is in bijective correspondence with $\SLthreeC\times\SLthreeC$ by sending a homomorphism $\rho$ to $(\rho(\alpha),\rho(\beta))$.  Since products of smooth manifolds are smooth manifolds and products of affine varieties are affine varieties, we can give the structure of a complex affine algebraic variety (and a smooth manifold) to $\hm(F_2,\SLthreeC)$.  

The group $\SLthreeC$ acts on $\hm(F_2,\SLthreeC)$ by conjugation which becomes {\it simultaneous} conjugation under the identification with  $\SLthreeC\times\SLthreeC$.  Precisely, for $g \in \SLthreeC$ and $\rho\in\hm(F_2,\SLthreeC)$ the action is defined by $g\cdot \rho=g\rho g^{-1}$, and for $(A,B)\in \SLthreeC\times\SLthreeC$ we have $g\cdot (A,B)=(gAg^{-1}, gBg^{-1})$.  

We are interested in the moduli space of such homomorphisms up to the equivalence defined by this action.  The quotient $\hm(F_2,\SLthreeC)/\SLthreeC$ does not satisfy the $T_1$-separation axiom since some points are not closed (corresponding to non-closed orbits in $\hm(F_2,\SLthreeC)$).  There is a natural way to make it satisfy the $T_1$-separation axiom: remove the orbits that are not closed in $\hm(F_2,\SLthreeC)$.  Let $\hm(F_2,\SLthreeC)^*$ be the subspace of $\hm(F_2,\SLthreeC)$ that consists of homomorphisms with closed conjugation orbits. Then the {\it polystable quotient} $$\mathfrak{X}(F_2,\SLthreeC):=\hm(F_2,\SLthreeC)^*/\SLthreeC$$ satisfies the $T_1$-separation axiom. 

However, one can show that it is homeomorphic to the Geometric Invariant Theory (GIT) quotient $\hm(F_2,\SLthreeC)/\!\!/\SLthreeC$, where the latter is given the induced metric topology from being an affine variety (called the {\it analytic} or {\it strong} topology).  See Florentino-Lawton \cite[Theorem 2.1]{FlLa4} for a proof.  Thus, $\mathfrak{X}(F_2,\SLthreeC)$ has the structure of a stratified manifold (and so is a metric space).  Moreover, $\mathfrak{X}(F_2,\SLthreeC)$ is homotopic to the non-Hausdorff space $\hm(F_2,\SLthreeC)/\SLthreeC$ just the same (see Florentino-Lawton-Ramras \cite[Proposition 3.4]{FLR}). 

To understand the affine variety structure on $\mathfrak{X}(F_2,\SLthreeC)$, which approximates the non-Hausdorff space  $\hm(F_2,\SLthreeC)/\SLthreeC$, we need to understand the ring of polynomial functions on it. 

Let $\mathbf{X}=(x_{ij})_{1\leq i,j\leq 3}$ and $\mathbf{Y}=(y_{ij})_{1\leq i,j\leq 3}$ be two $3\times 3$ matrices of {\it variables} (called ``generic matrices").  Then the ring of polynomials on $\hm(F_2,\SLthreeC)$ is: $$\C[\hm(F_2,\SLthreeC)]\cong \C[x_{ij},y_{ij}|1\leq i,j\leq 3]/\langle \mathrm{Det}(\mathbf{X})-1,\mathrm{Det}(\mathbf{Y})-1\rangle.$$

Since $\SLthreeC$ acts algebraically by conjugation on $\hm(F_2,\SLthreeC)$, there is an action on $\C[\hm(F_2,\SLthreeC)]$ given by $(g\cdot f)(\rho)=f(g^{-1}\rho g)$ for $g\in \SLthreeC$. The ring of polynomials on $\mathfrak{X}(F_2,\SLthreeC)$ is then 
\begin{align*}
	\C[\X(F_2,\SLthreeC)]&:=\C[\hm(F_2,\SLthreeC)]^{\SLthreeC}\\
	& := \{f\in\C[\hm(F_2,\SLthreeC)]\ |\ g\cdot f=f\text{ for all } g\in \SLthreeC\}.
\end{align*}

One of the main theorems in Lawton \cite{law0,law1} states that $$\C[\X(F_2,\SLthreeC)]\cong \C[x,y,z,t,A,B,C,D,s]/\langle s^2-Ps+Q\rangle,$$ where $P,Q\in \C[x,y,z,t,A,B,C,D]$ and the polynomial $s^2-Ps+Q$ is irreducible.  

Consequently, we have:

\begin{Theorem}[\cite{law0,law1}]
$\X(F_2,\SLthreeC)$ is a hypersurface in $\C^9$ that is a branched double cover of $\C^8$.  The branching locus is exactly the representations that are fixed by transpose.
\end{Theorem}

The explicit form of $P$ and $Q$ are:
$$P=-3 + t D + x A - t A y - D x B + y B + x A y B - A B z - x y C + z C$$
and
$Q=9 + t^3 - 6 t D + D^3 + x^3 - 6 x A + t D x A + A^3 + t x^2 y + 
 3 t A y - 2 D^2 A y - t x A^2 y + D x y^2 + D A^2 y^2 + y^3 - 
 x A y^3 - 2 t^2 x B + 3 D x B - D x^2 A B + D A^2 B - 6 y B + 
 t D y B - x^3 y B + x A y B + x^2 A^2 y B - A^3 y B - t A y^2 B + 
 t x^2 B^2 + t A B^2 - D x y B^2 + x A y^2 B^2 + B^3 - x A B^3 - 
 3 t x z + D^2 x z + t A^2 z + t^2 y z - 3 D y z - D x A y z + 
 x^2 y^2 z + A y^2 z + x^2 B z + 3 A B z - x A^2 B z - t x y B z + 
 D B^2 z - A y B^2 z + D A z^2 - 2 x y z^2 + t B z^2 + z^3 + D x^2 C +
  t^2 A C - 3 D A C + 3 x y C - x^2 A y C + A^2 y C + t y^2 C - 
 3 t B C + D^2 B C - t x A B C - D A y B C - x y^2 B C + x B^2 C + 
 A^2 B^2 C - 6 z C + t D z C + x A z C + y B z C + t x C^2 + 
D y C^2 - 2 A B C^2 + C^3.$

As functions of the matrix variables $\mathbf{X}$ and $\mathbf{Y}$ (up to  the ideal $\langle \mathrm{Det}(\mathbf{X})-1,\mathrm{Det}(\mathbf{Y})-1\rangle$), the variables $x,y,z,t, A,B,C,D,s$ are as follows:

$$x=\tr(\mathbf{X}), \;\;y=\tr(\mathbf{Y}), \;\; z=\tr(\mathbf{X}\mathbf{Y}), \;\; t=\tr(\mathbf{X}\mathbf{Y}^{-1}),$$
$$A=\tr(\mathbf{X}^{-1}), \;B=\tr(\mathbf{Y}^{-1}), \;C=\tr(\mathbf{X}^{-1}\mathbf{Y}^{-1}), \;D=\tr(\mathbf{X}^{-1}\mathbf{Y}), \text{ and } s=\tr(\mathbf{X}\mathbf{Y}\mathbf{X}^{-1}\mathbf{Y}^{-1}).$$
In these terms, the relations are $$P=\tr(\mathbf{X}\mathbf{Y}\mathbf{X}^{-1}\mathbf{Y}^{-1})+\tr(\mathbf{Y}\mathbf{X}\mathbf{Y}^{-1}\mathbf{X}^{-1})$$ and $$Q=\tr(\mathbf{X}\mathbf{Y}\mathbf{X}^{-1}\mathbf{Y}^{-1})\tr(\mathbf{Y}\mathbf{X}\mathbf{Y}^{-1}\mathbf{X}^{-1}).$$

\begin{Remark}
In Lawton \cite{law0, law1} the polynomials $P,Q$ are shown to have $\mathrm{Out}(F_2)$ symmetry and in terms of that symmetry they can be written more compactly.  See also Parker \cite{Parker} for a ``cyclically" symmetric coordinate change.
\end{Remark}

As shown in  Florentino-Lawton \cite{FlLa1}, the singular variety $\X(F_2,\SLthreeC)$ strong deformation retracts to a topological $8$-sphere $S^8$. In fact, this $8$-sphere is precisely the $\mathrm{SU}(3)$-character variety of $F_2$:  $\X(F_2,\SU(3)):=\hm(F_2,\SU(3))/\SU(3).$

\subsubsection{\texorpdfstring{$\mathrm{SU}(2,1)$}{SU21}-Character Variety}

We are interested in the corresponding moduli space where $\SU(3)$ is replaced by $\mathrm{SU}(2,1)$.

Given a word $w\in F_2=\langle \alpha,\beta\rangle$, define $\tr_w\!:\!\mathfrak{X}(F_2,\SLthreeC)\to \C$ by $\tr_w([\rho]):=\tr(\rho(w))$. For representations taking values in $\SU(3)$ or $\SU(2,1)$, we have that $\tr_{w^{-1}}=\overline{\tr_w}$ for any word $w\in F_2$.  Thus, the real coordinate ring of $\X(F_2,\SU(3))$ or the polystable quotient $\X(F_2,\SU(2,1)):=\hm(F_2,\SU(2,1))^*/\SU(2,1)$ is generated by the real and imaginary parts of $$\{\tr_\alpha,\tr_\beta,\tr_{\alpha\beta},\tr_{\alpha\beta^{-1}},\tr_{\alpha\beta\alpha^{-1}\beta^{-1}}\},$$ subject to the relations \begin{equation}\label{branch-eq1}\mathrm{Re}(\tr_{\alpha\beta\alpha^{-1}\beta^{-1}})=P/2\end{equation} and \begin{equation}\label{branch-eq2}\mathrm{Im}(\tr_{\alpha\beta\alpha^{-1}\beta^{-1}})=\pm\sqrt{Q-P^2/4}.\end{equation}  Note that both $P$ and $Q$ will be real polynomials, as $P$ is the sum $$P=\tr_{\alpha\beta\alpha^{-1}\beta^{-1}}+\tr_{\beta\alpha\beta^{-1}\alpha^{-1}}=2\mathrm{Re}(\tr_{\alpha\beta\alpha^{-1}\beta^{-1}}),$$ while $Q$ is the product $$Q=\tr_{\alpha\beta\alpha^{-1}\beta^{-1}}\cdot\tr_{\beta\alpha\beta^{-1}\alpha^{-1}}=|\tr_{\alpha\beta\alpha^{-1}\beta^{-1}}|^2,$$ and hence will simplify to {\it real} polynomials in the real and imaginary parts of $\tr_\alpha,$ $\tr_\beta,$ $\tr_{\alpha\beta},$ and $\tr_{\alpha\beta^{-1}}$.

If we write this out in terms of the variables $x,y,z,t$ above we obtain for $P$ and $Q$ the following expressions:
$$P = |x|^2 |y|^2 + |x|^2 + |y|^2 + |z|^2 + |t|^2 - 2 \Re (xy\overline z ) - 2 \Re (\overline x y t ) - 3 ,$$ 
and
$Q=9 - 6 |x|^2- 6 | y |^2 - 6 |z|^2 - 6 | t |^2 +| t |^2 | y |^2 +| x |^2 | y |^2+|y|^2 |z|^2 + |t|^2|z|^2 +|x|^2|z|^2+|t|^2|x|^2 + |x|^2 |y|^4  + | y |^2 | x |^4  + 2 \mathsf{Re} (z^3)  + 2 \mathsf{Re} (t^3) + 2 \mathsf{Re}(x^3)  + 2 \mathsf{Re} (y^3)+ 2\mathsf{Re} (t x^2 y) - 6 \mathsf{Re} (t x z) + 2 \mathsf{Re} (t^2 y z) + 2 \mathsf{Re} (x^2 y^2 z) - 4 \mathsf{Re} (x y z^2)  +2 \mathsf{Re} (x y^2 \overline{t}) - 6 \mathsf{Re} (y z \overline{t}) + 2 \mathsf{Re} (x z \overline{t}^2)  +6 \mathsf{Re} (t y \overline{x}) - 2 | x |^2 \mathsf{Re} ( y^3) + 2 \mathsf{Re} (y^2 z  \overline{x}) -   2 | y |^2 \mathsf{Re} (\overline{x} t y)  -2 | x |^2 \mathsf{Re} ( y z  \overline{t}) + 2 \mathsf{Re} (z^2 \overline{t}\overline{x}) - 4 \mathsf{Re} ( y \overline{t}^2  \overline{x}) - 2 | x |^2 \mathsf{Re} (t y  \overline{x}) + 2 \mathsf{Re} (t z  \overline{x}^2) + 2 \mathsf{Re} (y^2 \overline{t} \overline{x}^2)  -2 | y |^2 \mathsf{Re} (x^3) + 2 \mathsf{Re} (x^2 z \overline{y}) - 2 | y |^2 \mathsf{Re} (t x  z) + 2 \mathsf{Re} (t z^2 \overline{y}) +6 \mathsf{Re} ( z  \overline{x} \overline{y}) - 2 | x |^2 \mathsf{Re} (z  \overline{x} \overline{y}) + 2 \mathsf{Re} (z \overline{t}\overline{y}^2) - 2 | y |^2 \mathsf{Re} ( z  \overline{x} \overline{y}).$

However, just because $\X(F_2,\SU(3))$ and $\X(F_2,\SU(2,1))$ satisfy the same real algebraic equations and have the same (real) dimension, it does not mean they are homeomorphic. In fact, $\X(F_2,\SU(3))\cong S^8$, and we now show that $\X(F_2,\SU(2,1))$ is homotopic to a torus $S^1\times S^1$.

\begin{Theorem}
 $\X(F_2,\SU(2,1))$ strong deformation retracts to $S^1\times S^1$.
\end{Theorem}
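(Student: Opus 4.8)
The plan is to build, by hand, a strong deformation retraction of $\X:=\X(F_2,\SUtwo)$ onto the $2$--torus $\mathrm{Hom}(F_2,C)\cong S^1\times S^1$, where $C\cong S^1$ is a circle subgroup of $\SUtwo$ consisting of complex reflections with a common fixed complex geodesic (equivalently, the centre of the stabiliser of a complex geodesic; in a suitable basis $C=\{\,\mathrm{diag}(u,u,u^{-2}):|u|=1\,\}$, with the doubled eigenvalue carried by a $(1,1)$--subspace). One first checks that $\mathrm{Hom}(F_2,C)\hookrightarrow\X$ is injective — the $\SUtwo$--conjugacy class of such an element is determined by its eigenvalues, hence by $u$ — so that this torus really is a subspace of $\X$; since $\X$ is the quotient of the connected group $\SUtwo\times\SUtwo$, producing such a retraction also shows that $\X$ is connected and not simply connected. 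Throughout one works in the concrete model provided by Lawton's parametrisation: $\X$ is the semialgebraic subset of $\{(x,y,z,t,s)\}\subset\C^4\times\C$ cut out by $s^2-Ps+Q=0$ together with $\SUtwo$--realisability of $(x,y,z,t)$, the projection to $\C^4$ having as image the realisable region $\mathcal{R}$ (which is contained in $\{Q\geq 0\}$, since $Q=|\tr_{[\alpha,\beta]}|^2$ on realised tuples).

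The retraction is designed to push the two generators towards $C$ in stages. By Corollary~\ref{corol_f} a loxodromic $A$ has eigenvalues $re^{i\varphi},r^{-1}e^{i\varphi},e^{-2i\varphi}$ with $r>1$, the first two carried by null eigenlines (the endpoints of the complex geodesic $A$ preserves) and the last by a positive eigenline (its polar point); shrinking $r$ to $1$, i.e.\ replacing $A$ by the element $A^{(\lambda)}$ with the same eigenlines but eigenvalues $r^{\lambda}e^{i\varphi},r^{-\lambda}e^{i\varphi},e^{-2i\varphi}$ (and leaving non-loxodromic $A$ fixed), is an intrinsic, hence conjugation-equivariant, operation, so it defines a homotopy of $\X$ whose time-$0$ map carries a loxodromic generator to the complex reflection in the geodesic it preserved. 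One then wants to continue by turning a regular-elliptic generator into a complex reflection by merging a pair of its three unit-modulus eigenvalues, and finally by moving the fixed geodesics of $\rho(\alpha)$ and $\rho(\beta)$ together — using that $\mathbf{H}^2_{\C}$ is two-point homogeneous — until they coincide and $[\rho]\in\mathrm{Hom}(F_2,C)$. Each stage is arranged to be stationary on $\mathrm{Hom}(F_2,C)$, so the composite is a strong deformation retraction.

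The hard part is making all of this continuous and well defined, and here the stratification of $\SUtwo$ recorded in the Theorem and Corollary above is both the tool and the obstacle. Three points stand out. (i) The middle stage cannot be performed by touching $\rho(\alpha)$ alone: at the level of traces that would mean retracting $\C$ (or the inside of the deltoid) onto the deltoid $f^{-1}(0)$, which is impossible, so the middle stage must genuinely couple the two generators and exploit the structure of $\X$ over the trace plane. (ii) Along $f^{-1}(0)$ the trace no longer separates conjugacy classes — complex reflections in a point, complex reflections in a geodesic, and non-unipotent parabolics all have deltoid traces, and they degenerate further at the cusps $3\zeta_3$ — so the ``shrink-$r$'' flow is discontinuous precisely where a loxodromic generator degenerates to a parabolic (its two null eigenlines collide and a Jordan block appears, which the flow does not detect), and must be corrected in a neighbourhood of that stratum. (iii) The choice of which pair of eigenvalues to merge in the middle stage must be canonical and continuous: ``merge the closest pair'' jumps when two spectral gaps agree, and ``merge a fixed pair'' breaks the residual $\Z/2$ symmetry, which forces one to express the whole homotopy in terms of conjugation-invariant data only — eigenvalue moduli and arguments, and preserved complex geodesics up to isometry — rather than in terms of matrix normal forms. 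Verifying that the only stationary points of the resulting homotopy are those of $\mathrm{Hom}(F_2,C)$ then completes the proof. (An alternative, more computational, route is to work with the semialgebraic model directly: analyse the realisable region $\mathcal{R}$ and the sign of $P^2-4Q$ from the explicit formulas for $P$ and $Q$, showing that $\mathcal{R}$ strong deformation retracts onto a $2$--torus lying off $\{P^2=4Q\}$ over which the equation $s^2-Ps+Q=0$ has connected total space.)
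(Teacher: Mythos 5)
Your proposal is a plan, not a proof, and you say so yourself: you list three obstacles — (i) the middle stage must ``genuinely couple the two generators'' in some unspecified way; (ii) the loxodromic shrinking flow is discontinuous at the parabolic stratum and ``must be corrected in a neighbourhood of that stratum,'' with no correction supplied; (iii) a canonical eigenvalue-merging rule is needed and neither of the obvious choices works — and none of them is resolved before you declare that ``verifying the stationary points then completes the proof.'' Each of these is a genuine analytic difficulty: the conjugation action is non-proper, the quotient topology on $\X$ near the non-closed strata is delicate, and a $G$-equivariant flow on $\mathrm{Hom}(F_2,G)$ descending to a continuous strong deformation retraction of the polystable quotient would have to be checked carefully precisely across the loci (parabolics, reducibles, the deltoid $f^{-1}(0)$) where you concede it is discontinuous as written. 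Absent that, the argument does not establish the theorem.

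The paper's actual proof sidesteps all of this by invoking two structural results. First, by Casimiro--Florentino--Lawton--Oliveira \cite{CFLO}, for a real reductive group $G$ with maximal compact $K$, the character variety $\X(F_r,G)$ strong deformation retracts onto $\X(F_r,K)$; Lemma~\ref{lem:maxcomp} identifies $K\cong\mathrm{U}(2)$ here. Second, by Florentino--Lawton \cite{FlLa2}, using $\mathrm{U}(2)\cong\SU(2)\times_{\Z_2}S^1$, one has $\X(F_2,\mathrm{U}(2))\cong\X(F_2,\SU(2))\times_{\Z_2^2}(S^1\times S^1)$, and since $\X(F_2,\SU(2))$ is a closed $3$-ball \cite{FlLa1} this product is homeomorphic to $B\times(S^1\times S^1)$, which retracts onto the torus. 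This route is an order of magnitude shorter and avoids any explicit construction of a flow. Your geometric picture of the target torus (a circle $C$ of complex reflections about a fixed complex geodesic, diagonal with a repeated unit eigenvalue on a signature-$(1,1)$ subspace) is a plausible concrete incarnation of where the retraction lands, and thinking about it may build intuition; but to turn your sketch into a proof you would either have to solve the continuity problems (i)--(iii) in detail, or — more realistically — reduce to the known retraction theorems as the paper does.
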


\begin{proof}
As defined above, $\X(F_2,\SU(2,1))$ is the polystable quotient of $\hm(F_2,\SU(2,1))$ by the conjugation action of $\SU(2,1)$.  As Lemma \ref{lem:maxcomp} shows, $\mathrm{U}(2)$ is a maximal compact subgroup of $\SU(2,1)$.  Thus, by Casimiro-Florentino-Lawton-Oliveira \cite{CFLO}, $\X(F_2,\SU(2,1))$ strong deformation retracts to $\X(F_2,\mathrm{U}(2))$.  

On the other hand, $\mathrm{U}(2)\cong \SU(2)\times_{\mathbb{Z}_2}S^1$, where $\SU(2)\times_{\mathbb{Z}_2}S^1 := \left(\SU(2)\times S^1\right)/ \mathbb{Z}_2,$ where $\mathbb{Z}_2$ acts on $\SU(2)\times S^1$ as follows. For $\mu \in \{\pm 1\}\cong\mathbb{Z}_2$, $A \in \SU(2)$ and $\zeta \in S^1$, $\mu \cdot (A, \zeta):= (\mu A, \mu^{-1}\zeta)$. From this we conclude that $\X(F_2,\mathrm{U}(2))\cong \X(F_2,\SU(2))\times_{\mathbb{Z}_2^2}(S^1\times S^1)$ by Florentino-Lawton \cite{FlLa2}, where $$\X(F_2,\SU(2))\times_{\mathbb{Z}_2^2}(S^1\times S^1) := \left( \X(F_2,\SU(2))\times (S^1\times S^1)\right)/ \mathbb{Z}_2^2$$ and the action of $\mathbb{Z}_2^2$ on $\X(F_2,\SU(2))\times (S^1\times S^1)$ is the product action coming from the action of $\mathbb{Z}_2$ defined above.  On the other hand, $\X(F_2,\SU(2))$ is a closed 3-ball $B$ (see Florentino-Lawton \cite{FlLa1}) and so by \cite{FlLa2} we conclude that $\X(F_2,\mathrm{U}(2))\cong B\times (S^1\times S^1),$ which strong deformation retracts to $S^1\times S^1$, as required.
\end{proof}

Given that $\X(F_2,\SLthreeC)$ deformation retracts  onto $\X(F_2,\SU(3))$ and $\X(F_2,\SU(3))$ is homeomorphic to an 8-sphere, we have the following corollary.

\begin{Corollary}
$\X(F_2,\SU(2,1))$ is not simply connected, and so is not homotopic to either $\X(F_2,\SLthreeC)$ or $\X(F_2,\SU(3))$.
\end{Corollary}

We will be studying the mapping class group dynamics on certain subvarieties of $\X(F_2,\SU(2,1))$, called {\it relative character varieties}.  To shorten the notation, let $\X=\X(F_2,\SU(2,1))$ in the sequel.  Define $\mathfrak{b}:\X\to \C$ by $\mathfrak{b}([\rho])=\tr(\rho(\alpha\beta\alpha^{-1}\beta^{-1}))$.  By \cite[Theorem 6.2.4]{gol-chyp}, $\mathfrak{b}$ is surjective.

\begin{Definition}
The {\it relative character variety} with respect to $c\in \C$ is $\X_c:=\mathfrak{b}^{-1}(c)$.
\end{Definition}

\begin{Theorem}
$c\in \C-\R$ if and only if $P^2-4Q\not=0$.  $\X_c$ is a smooth manifold if $c\in \C-\R$.
\end{Theorem}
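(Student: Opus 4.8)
The plan is to unwind the two claims through the trace parametrization from Lawton \cite{law0,law1} and the branching description in equations \eqref{branch-eq1}--\eqref{branch-eq2}. Recall that $\mathfrak{b}([\rho]) = \tr_{aba^{-1}b^{-1}}$, so $\operatorname{Re}(\mathfrak{b}) = P/2$ and $\operatorname{Im}(\mathfrak{b})^2 = Q - P^2/4$, where $P, Q$ are the real polynomials in $x,y,z,t$ written out above. Since $|\mathfrak{b}|^2 = Q$ and $2\operatorname{Re}(\mathfrak{b}) = P$, we have $c = \mathfrak{b}([\rho]) \in \R$ if and only if $\operatorname{Im}(\mathfrak{b}) = 0$ if and only if $Q - P^2/4 = 0$, i.e. $P^2 - 4Q = 0$. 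This is the first assertion; it is essentially a bookkeeping identity, but I would state it carefully, noting that $Q - P^2/4 \geq 0$ always holds on $\X$ (it equals a square of a real number), so the vanishing locus of $P^2 - 4Q$ is exactly the real slice of $\mathfrak{b}$.

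For the smoothness claim, the strategy is to show $\X_c = \mathfrak{b}^{-1}(c)$ is cut out transversally inside the ambient affine description of $\X$ when $c \notin \R$. I would proceed as follows. First, recall from Section \ref{sl3-charvar} that $\X = \X(F_2, \SU(2,1))$ is, as a real algebraic set, the subset of $\C^4 = \{(x,y,z,t)\}$ together with the value $s = \tr_{aba^{-1}b^{-1}}$ constrained by \eqref{branch-eq1}--\eqref{branch-eq2}; equivalently, the real points of the hypersurface $s^2 - Ps + Q = 0$ lying over $\C^4$ where additionally $s = \overline{s'}$-type reality is imposed. When $c \notin \R$, the fiber $\X_c$ is the locus in $(x,y,z,t)$-space where $P(x,y,z,t) = 2\operatorname{Re}(c)$ and $Q(x,y,z,t) = |c|^2$, with $s$ then uniquely determined (the value $c$, not its conjugate, since $\operatorname{Im}(c) \neq 0$ picks out one of the two branches). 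So $\X_c$ is identified with the common zero locus in $\R^8$ of the two real polynomials $P - 2\operatorname{Re}(c)$ and $Q - |c|^2$. It remains to prove that $dP$ and $dQ$ are linearly independent along this locus.

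The main obstacle is this transversality computation. Here I would use the complex-geometric structure rather than brute force: over $\C^8 = \{(x,y,z,t) : \text{with } \bar x, \bar y, \ldots \text{ as independent real coordinates}\}$, the map $(P, Q)$ controls the discriminant of the characteristic polynomial of $\rho([\a,\b])$ via the identity $P^2 - 4Q = (\mathfrak{b} - \overline{\mathfrak{b}})^2 = -4(\operatorname{Im}\mathfrak{b})^2$, and the function $f(\mathfrak{b}) = |\mathfrak{b}|^4 - 8\operatorname{Re}(\mathfrak{b}^3) + 18|\mathfrak{b}|^2 - 27$ from the deltoid discussion detects when $\rho([\a,\b])$ fails to be regular. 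I expect the cleanest argument is: if $dP$ and $dQ$ were dependent at a point of $\X_c$ with $c \notin \R$, then the map $\mathfrak{b} = (P/2) + i\sqrt{Q - P^2/4}$ would fail to be a submersion there, which would force $\rho([\a,\b])$ to be non-regular (on the deltoid $f = 0$), contradicting $c \notin \R$ together with part (3) of Corollary \ref{corol_f}, since for $c \notin \R$ one checks $f(c) \neq 0$ unless $c$ lies at a cusp $3\zeta_3 \subset \R$-translates — which are real. Alternatively, and perhaps more robustly, I would invoke that $\mathfrak{b}: \X(F_2,\SLtwoC) \to \C$-type trace maps are known to be submersive off the reducible/non-regular locus (the analog of Goldman's result for $\SL(2,\C)$ relative character varieties), and verify that $c \notin \R$ excludes the bad locus inside $\X(F_2,\SU(2,1))$. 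The bulk of the write-up will be checking that the non-regular locus of $\mathfrak{b}$ meets only $\mathfrak{b}^{-1}(\R)$; I would do this by the deltoid: $f(c) = 0$ with $c \notin \R$ is possible, but at such $c$ the element $\rho([\a,\b])$ has a repeated eigenvalue on the unit circle, and one shows via the explicit branch equation that the two branches of $s$ come together exactly on $\{P^2 - 4Q = 0\} = \mathfrak{b}^{-1}(\R)$, so for $c \notin \R$ the variety $\X_c$ lies in the smooth locus of the branched cover and the fiber inherits smoothness.
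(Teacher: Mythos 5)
The first half of your argument (the equivalence $c\in\C-\R\iff P^2-4Q\neq 0$) is correct and matches the paper: on $\X(F_2,\SU(2,1))$ one has $2\operatorname{Re}(c)=P$, $|c|^2=Q$, so $P^2-4Q=-4(\operatorname{Im} c)^2\le 0$ with equality iff $c\in\R$, i.e. the branching locus $\{P^2-4Q=0\}$ is exactly $\mathfrak{b}^{-1}(\R)$.

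The smoothness half has a genuine gap, and it stems from conflating two different ``bad loci.'' Your first approach leans on the deltoid $f^{-1}(0)$, which detects when $\rho([\a,\b])$ is \emph{non-regular} (has a repeated eigenvalue). You initially assert $f(c)\ne 0$ for $c\notin\R$, which is false: the deltoid is a genuinely two-dimensional curve in $\C$ with cusps at $3,\,3\omega,\,3\omega^2$, the latter two non-real, and plenty of non-real points in between; you then acknowledge this, which leaves the approach unresolved. Your second approach is closer in spirit to the paper's but still appeals to ``the reducible/non-regular locus'' as a single thing. These are not the same: reducibility of the representation $\rho$ (an invariant subspace in $\C^3$) is what controls singularities of the character variety and submersivity of trace maps, while regularity of the single element $\rho([\a,\b])$ is an unrelated condition on its Jordan type. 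The statement you actually need, and which your sketch never supplies, is the fact from Lawton \cite{law0,law1} that the \emph{reducible locus is contained in the branching locus} $\{P^2-4Q=0\}$. That is the bridge: $c\notin\R$ puts $\X_c$ off the branching locus, hence (by Lawton's inclusion) in the irreducible locus, which is smooth in $\X(F_2,\SL(3,\C))$ by \cite{law1}; and on the irreducible locus the boundary trace $\mathfrak{b}$ is a submersion by the computation of Goldman--Lawton--Xia \cite[Prop.~8.1]{GLX}, so the fibres are smooth by the implicit function theorem. Without the reducibles-in-branching-locus step, merely being off the branch locus of the double cover over $\C^8$ does not give you smoothness of the ambient variety, and the ``transversality of $dP,dQ$'' you defer is precisely the content of the GLX submersion argument, which you would still have to carry out. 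So the outline is pointed the right way in places, but the proof as written does not close: the deltoid is a red herring, and the essential input (Lawton's containment) is missing.
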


\begin{proof}
The variety $\X(F_2,\SL(3,\C))$ is a branched double cover of $\C^8$ embedded in $\C^9$ as described in Section \ref{sl3-charvar}.  The branching locus $\mathcal{B}$, where the branched double cover is exactly 1-1, is defined by the equation $P^2-4Q=0$.  As shown in Florentino-Lawton \cite{FlLa1}, the locus $\mathcal{B}$ corresponds exactly to the representations that are fixed under the involution $[(A,B)]\mapsto [(A^\dagger,B^\dagger)]$. We call these the symmetric representations (up to conjugation).  Equations \eqref{branch-eq1} and \eqref{branch-eq2} imply that $\X_c\subset\mathcal{B}$ if and only if $c\in \R$.  Equivalently, $\X_c\cap\mathcal{B}=\emptyset$ if and only if $c\in \C-\R$.  As shown in Lawton \cite{law0,law1}, the reducible  locus (that is, the set of reducible representations) is contained in the branching locus.  Thus, if $c\in \C-\R$, we know that $\X_c$ consists of conjugation classes of irreducible representations.  The irreducible locus is a smooth manifold in $\X(F_2,\SL(3,\C))$ by \cite{law1}, which implies the irreducible locus in $\X$ is a smooth manifold too. Thus, since the boundary map is a submersion on the irreducible locus (as the computation in Goldman-Lawton-Xia \cite[Proposition 8.1]{GLX} applies to this setting too), the fibres $\X_c$ are smooth submanifolds by the inverse function theorem.  
\end{proof}

\begin{Remark}
The locus of reducible representations (which are generally singular) is a 5 dimensional subspace of the 7 dimensional branching locus in $\X$.  Thus, if $c\in\R$, and so $\X_c$ is in the branching locus, we expect that $\X_c$ contains singular representations and so is not a manifold. Morally this should follow since either $\X_c$ contains both an irreducible and a reducible and thus a singularity, or $\X_c$ contains only reducibles and thus contains an orbifold singularity from the action of the Weyl group.  However, as \cite[Proposition 3.1]{FGLM} shows, relative character varieties containing only reducible representations can ``accidentally" be manifolds.  We also note that irreducible representations in the branching locus may have special geometric features, as described by Paupert-Will \cite{paupert-will} or Will \cite{Will-07}.
\end{Remark}

\begin{Remark}
It would be interesting to explore whether $\X(F_2,\SU(2,1))$ is a topological manifold, since $\X(F_2,\SU(3))$ is a topological manifold and $\X(F_2,\SU(2,1))$ is homotopic to a manifold.  It would also be interesting to explore the topology $($up to homotopy$)$ of the level sets $\X_c$ for various values of $c\in \C$.
\end{Remark}

In our situation, $\mathrm{Out}(F_2)\cong \mathrm{GL}(2,\mathbb{Z})$ which is isomorphic to the mapping class group of the 1-holed torus. This group acts on $\X$ and the subgroup of mapping classes that fix the boundary of $S_{1,1}$ acts on $\X_c$. We will now proceed to show there exists an open domain of discontinuity $\X_{BQ}$ for the action of $\mathrm{Out}(F_2)$ on $\X$ which strictly contains the set of discrete, faithful, convex cocompact representations (more generally the primitive stable representations).

\section{Bowditch set} \label{sec:bow}

Recall that the aim of this article is to define a set of representations that will provide an open domain of discontinuity for the action of $\mathrm{Out}(F_2)$ on $\mathfrak{X}(F_2 , \SUtwo)$. In particular, in this section we will define and study the set of Bowditch representations, and in the next section we will prove that they define an open domain of discontinuity. 

For notational convenience, given a representation $\rho \co F_2 \rightarrow \SUtwo$, or its conjugacy class $[\rho] \in \X$, and given $X, Y \in \mathcal{S}$, we will write $x=\Tr(\rho(X))$, $y=\Tr(\rho(Y))$ et cetera. 

\subsection{Orientation on \texorpdfstring{$\mathcal{E}$}{E}}\label{orientation}

Given a representation $\rho \co F_2 \rightarrow \SUtwo$, we will need to assign to each undirected edge $e \in \mathcal{E}$ a particular directed edge $\vec e_\rho$. In Section \ref{ss:simple} we identified edges $e$ of $\mathcal{E}$ with triples $e = (X; Y, Z) \in \mathcal{E}^{(1)}$ where $X,Y,Z \in \mathcal{S}$. Since we need to discuss an orientation, it will be more useful to use the notation $e = (X, Y, Z; T , T') \in \mathcal{E}^{(1)}$; see Figure \ref{fig:e_oriented}.

\begin{Definition}
	Let $e= (X, Y, Z; T, T') \in \mathcal{E}^{(1)}$. Let  $t = |\tr (\rho (T)) | $ and $|\tr (\rho (T'))| = t' $, then we define the oriented edge $\vec e_\rho$ in the following way :
	\begin{itemize}
		\item If $|t| > |t'|$  (respectively $|t| < |t'|$) we orient the edge $e$ pointing towards $T'$ (respectively towards $T$).  
		\item If $|t| = |t'|$ we choose the orientation of $\vec e_\rho$ arbitrarily.
	\end{itemize}
	That oriented edge is denoted $\vec e_\rho  = (X, Y, Z; T \rightarrow T')$, when it points towards $T'$. 
\end{Definition}

We will see that the intersection of the Bowditch representations (up to conjugation) with the set of representations that satisfy $|t| = |t'|$ for infinitely many edges is empty. So for a given representation, the finite number of arbitrary choices that occur when $|t| = |t'|$ will not affect the large scale behavior of the orientation of the graph $\mathcal{E}$, the latter being what is essential to our proof.

\subsection{Connectedness and Fork Lemma} 

In this subsection we will prove an important lemma, called the ``Fork Lemma," which will be the key to prove the connectedness of the set of regions with trace bounded by a given constant.
	
\subsubsection{Fork Lemma} \label{sec:fork_lemma}

Let us first describe a preliminary result which will be useful in several proofs in this section and in Section \ref{sec:fibonacci}.	 

\begin{Lemma}\label{bound}
	Let $c \in \C$, $[\rho] \in \X_c$, $C=\max\{2\sqrt[4]{3},\sqrt{|\Re(c)|}\}$, and $v=(X,Y;Z,T) \in \mathcal{E}^{(0)}$. If the collection $x$, $y$, $z$, and $t$ satisfies $|x|,|y| \geq C$ and $|t| \geq |z|$, then we have:
$$\frac C4 \max (|x| , |y|) < |t| < 5 |x||y|.$$
\end{Lemma}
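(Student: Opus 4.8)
The statement is about a vertex $v=(X,Y;Z,T)$ of the edge graph, which corresponds to two simple closed curves $X,Y$ meeting once, together with the two "neighbouring" curves $Z,T$ obtained by the two possible Markov-type moves. The crucial algebraic input must be the trace identities for $\mathrm{SU}(2,1)$ (equivalently $\mathrm{SL}(3,\C)$) representations of $F_2$ coming from Lawton's work, relating $x,y,z,t$ (and their conjugates) via the peripheral trace $c$. In the classical $\mathrm{SL}(2,\C)$ setting the analogous fact is the Markov equation $x^2+y^2+z^2 = xyz + (\text{const})$, which immediately gives $z \approx xy$ when $|x|,|y|$ are large. Here the analogue should be a relation expressing $z + t$ (or $zt$, or a symmetric combination) as a polynomial in $x,y,\bar x,\bar y$ with leading term $\pm xy$ or $\pm \bar x\bar y$, plus lower-order terms controlled by $\Re(c)$. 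So the first step is to write down precisely the identity satisfied by $x,y,z,t,c$ — presumably $z+t = $ (polynomial of degree $\le 1$ in each of $z,t$... ) — and extract from it the statement that $z+t$ and $zt$ are polynomials in $x,y,\bar x,\bar y, c$.

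**The main estimates.** Once I have $z + t = p(x,y,\bar x,\bar y)$ and $z\cdot t = q(x,y,\bar x,\bar y, c)$ with explicit bounds on the degrees and number of monomials, the argument splits into two inequalities. For the lower bound $\frac C4\max(|x|,|y|) < |t|$: since $|t|\ge|z|$ we have $|t| \ge \tfrac12|z+t|$, and if the leading term of $p$ is of the form $xy$ (or $\bar x\bar y$) with coefficient $1$, then $|z+t| \ge |x||y| - (\text{sum of lower terms})$. Using $|x|,|y|\ge C \ge 2\sqrt[4]{3}$ I would bound each lower-order monomial (which has total degree $\le$ that of $xy$, with at least one factor being a constant or a single variable) by a small fraction of $|x||y|$, so that $|z+t| \ge \tfrac12|x||y| \ge \tfrac C2 \max(|x|,|y|)$, hence $|t|\ge \tfrac C4\max(|x|,|y|)$. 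For the upper bound $|t| < 5|x||y|$: from $|t|\ge|z|$ and $zt = q$ we get $|t|^2 \ge |z||t| = |q|$, but that bounds $|t|$ from below, not above; instead I use $|t| \le |z+t| + |z| \le |z+t| + |t|$... which is vacuous, so the right move is $|t| + |z| \le$ something, or better: $|z| = |q|/|t|$ so $|t| = |z+t| - |z|\cdot(\pm1)$... Actually the clean route is $|t| \le |z+t| + |z|$ and $|z|\le|t|$ gives nothing; so I instead bound $|z+t|$ from above by $|p| \le |x||y| + (\text{lower terms}) < 4|x||y|$ say, and separately need $|z|$ small. Since $|z|\le|t|$ and $|z+t|\le 4|x||y|$, if $t$ and $z$ had the same argument we'd get $|t|\le 4|x||y|$; in general $|t| \le |z+t| + |z| \le |z+t| + |t|$ fails — so I must use $zt=q$ as well: $|t|^2 = |t|\cdot|t| \ge |t|\cdot|z| = |q|$ is again the wrong direction. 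The correct trick: write $|t| \le |z+t|+|z|$ and use $|z| = |q|/|t| \le |q|/|z|$, i.e. $|z|^2\le|q|$, so $|z|\le|q|^{1/2}$; then $|t|\le|z+t|+|q|^{1/2} \le |p| + |q|^{1/2}$, and bound both $|p|$ and $|q|^{1/2}$ by a multiple of $|x||y|$ using $|x|,|y|\ge C$ and $\sqrt{|\Re c|}\le C$, arriving at the constant $5$.

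**The expected obstacle.** The genuine difficulty is bookkeeping: the polynomials $P,Q$ in Lawton's presentation (reproduced in the excerpt) are large, and the relevant identity for $z+t$ and $zt$ at a general vertex $v=(X,Y;Z,T)$ must be derived from the fundamental trace relations — it is not literally $P$ and $Q$ unless $(X,Y)=(\alpha,\beta)$. So I would first reduce to the base vertex by mapping-class-group equivariance (the identities are $\Out(F_2)$-covariant), then read off from the explicit $Q = |{\tr}_{aba^{-1}b^{-1}}|^2$ expression and the analogous formula for $z+t$ the precise list of monomials, and finally carry out the term-by-term majorization with the thresholds $C\ge 2\sqrt[4]{3}$ (chosen exactly so that $C^4\ge 48$, i.e. each "degree defect" monomial is beaten) and $C\ge\sqrt{|\Re c|}$ (chosen so the $c$-dependent terms in $q$ are absorbed). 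I expect the constants $1/4$ and $5$ to come out of precisely this majorization with a little slack, and the real work is organizing the (routine but lengthy) inequality chains cleanly rather than any conceptual step.
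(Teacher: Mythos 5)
Your proposal rests on an assumption that turns out to be false in the $\SU(2,1)$ setting, and this sinks the whole plan. You conjecture that the trace identity at a vertex $(X,Y;Z,T)$ should express $z+t$ and $zt$ as polynomials in $x,y,\bar x,\bar y,c$, by analogy with the $\SL(2)$ Cayley--Hamilton relation $\tr(AB)+\tr(AB^{-1})=\tr(A)\tr(B)$. For $\SL(3,\C)$ the degree-three Cayley--Hamilton identity gives instead $A^{-1}=A^2-\tr(A)A+\tr(A^{-1})I$, so that $\tr(AB^{-1})$ is expressed in terms of $\tr(AB^2)$, $\tr(AB)$, $\tr(A)$, $\tr(B)$, $\tr(B^{-1})$ --- it involves a {\it new} trace $\tr(AB^2)$, not a symmetric function of $z$ and $t$ alone. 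There is simply no identity of the form $z+t=p(x,y,\bar x,\bar y)$ at a vertex, and consequently no $zt=q$ either. So neither the lower-bound estimate (which needs $|z+t|\gtrsim|x||y|$) nor the upper-bound argument via $|t|\le|z+t|+|q|^{1/2}$ can get off the ground.

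The identity that actually governs a vertex is the single {\it real} quadratic relation $P=2\Re(c)$, where in the restriction to $\SU(2,1)$ one has
\begin{align*}
P=|x|^2 |y|^2 + |x|^2 + |y|^2 + |z|^2 + |t|^2 - 2 \Re (xy\overline{z} ) - 2 \Re (\overline{x} y t ) - 3.
\end{align*}
This is quadratic in $z,\bar z,t,\bar t$ (it arises as $\tr\,\rho([\alpha,\beta])+\tr\,\rho([\beta,\alpha])$, i.e.\ from the branch-locus polynomial, not from a linear Cayley--Hamilton relation), and it does not factor through the elementary symmetric functions of $z,t$ because $z$ and $t$ enter through {\it different} cross-terms $\Re(xy\bar z)$ and $\Re(\bar x y t)$. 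The paper's proof works directly with this quadratic form: assume the negation of one of the two inequalities, use $|z|\le|t|$ to dominate the cross-terms by $4|x||y||t|$, complete a square such as $(|xy|-2|t|)^2-3|t|^2$, and conclude $P>2C^2$, contradicting $P=2\Re(c)\le 2C^2$. (The threshold $C\ge 2\sqrt[4]{3}$, i.e.\ $C^4\ge 48$, enters exactly as you guessed, but in bounding the completed square from below.) So the step you called ``routine bookkeeping'' is in fact where the real idea lives: identifying the correct one-variable real relation and exploiting $|z|\le|t|$ inside its quadratic structure, rather than looking for an $\SL(2)$-style symmetric-function decomposition that does not exist here.
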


\begin{proof}
	Without loss of generality we can assume that $|x| \geq |y|\geq C$. For the first inequality, assume by contradiction that $|t| \leq \frac{C}{4}|x|$. Recall that : 
	$$P=2 \Re (c) = |x|^2 |y|^2 + |x|^2 + |y|^2 + |z|^2 + |t|^2 - 2 \Re (xy\overline z ) - 2 \Re (x \overline y t ) - 3.$$
	We have:
	\begin{align*}
		P &\geq (|x|^2|y|^2 - 4|x||y||t| +|t|^2) + |x|^2 + |y|^2 + |z|^2 -3 \\
		& \geq \left((|xy| - 2 |t|)^2 - 3|t|^2\right) + 2C^2 +0 - 3 \\
		& \geq \left(\left(|x||y| - \frac{C}{2} |x|\right)^2 - \frac{3}{16} C^2 |x|^2\right) + 2C^2 - 3 \\
		& = |x|^2 \left( \left(|y|-\frac{C}{2}\right)^2 - \frac{3}{16}C^2\right) + 2C^2 - 3 \\
		& \geq C^2 \frac{C^2}{16} + 2C^2 - 3 > 2C^2,
	\end{align*}
	where we used the hypothesis $|t| \leq \frac{C}{4}|x|$ only in the third line, and the fact that $C^4 > 16 \cdot 3$ in the last inequality. This gives a contradiction, as $ 2\Re(c) \leq 2 C^2$, and hence proves the first inequality.
	
	For the second inequality, assume by contradiction that $|t| \geq 5 |x| |y|$ (or equivalently $|x| |y| \leq \frac{|t|}{5}$). In that case, we have :
	\begin{align*}
		P& \geq (|x|^2|y|^2)+(|x|^2+|y|^2)+(|z|^2)+(|t|^2-2 |x||y||t| - 2 |x||y||z|)  - 3 \\
		& \geq  C^4 + 2C^2 + 0 + \frac{|t|^2}{5} - 3 \\
		& > 2C^2,
	\end{align*}
	where the last inequality is justified since $C^4 > 16\cdot 3 > 3$. This gives the same contradiction, as $P \leq 2 C^2$.
\end{proof}

We are now ready to state and prove the Fork Lemma. First, let us define what is a fork. For a vertex $v = (X , Y ; Z, T) \in \mathcal{E}^0$ choose representatives $\alpha, \beta \in F_2$ of $X$ and $Y$ such that $\alpha \beta$ and $\alpha \beta^{-1}$ are representatives of $Z$ and $T$, respectively. Recall that the traces of the four neighboring regions around $v$ are given by:
$$\tr (\alpha \beta^2) = t - x \overline{y} + y z,  \quad\quad\quad\tr \left((\alpha^2 \beta)^{-1}\right) = t - x \overline{y} + \overline{xz},$$ and $$ \tr (\a^{-2} \b)=\overline{tx} - x y + z,  \quad\quad\quad \tr (\a \b^{-2}) = -x y + t \overline{y} + z. $$
Recall also that we have a $4$--coloring of the edges of $\Ec$ defined in Section \ref{ss:simple}, and at each vertex there are only two triangles of two different colors meeting.

\begin{figure}
[hbt] \centering
\includegraphics[height=3 cm]{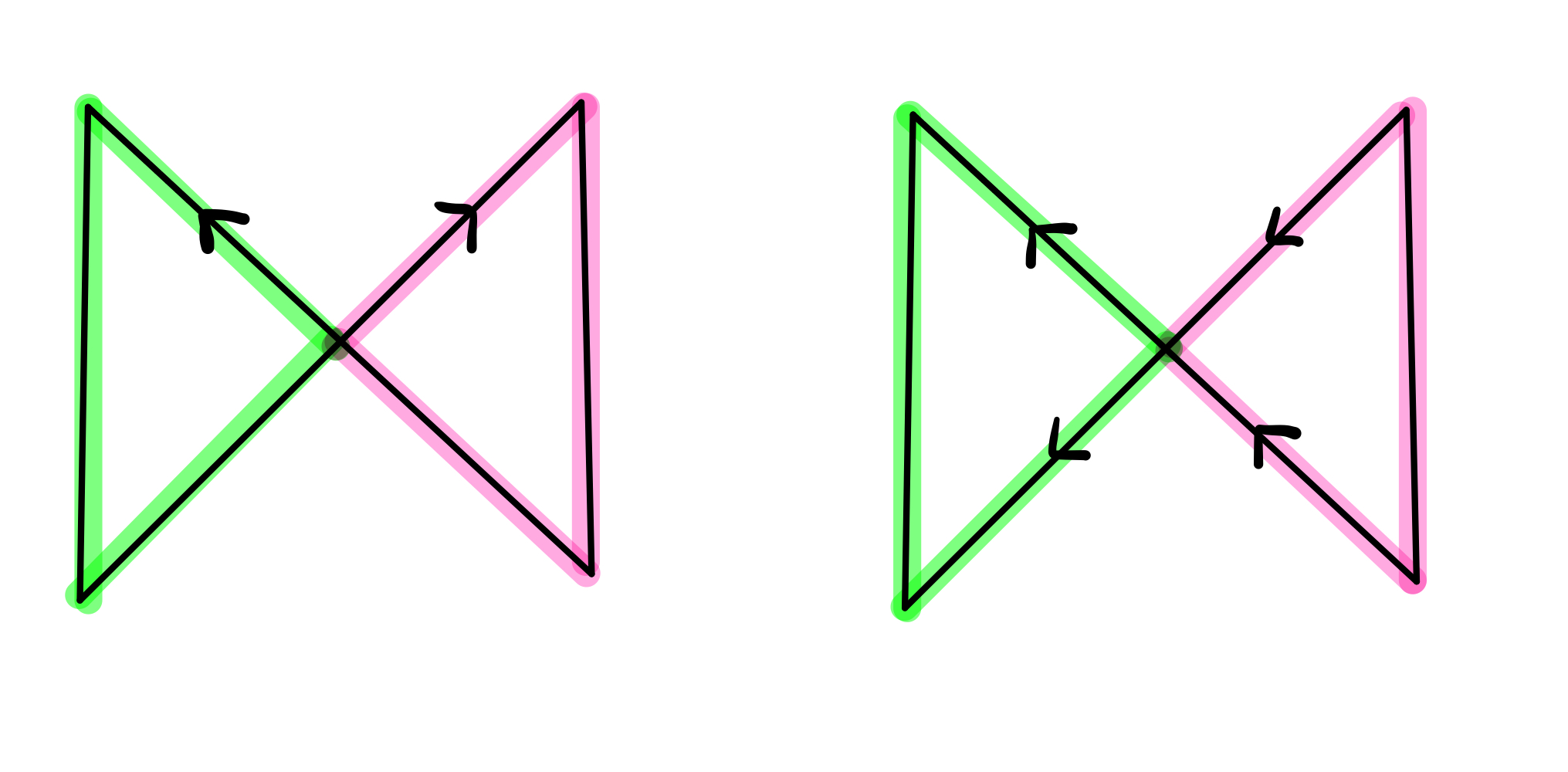}
\caption{A vertex which is a fork (left) and one which is not (right). The two arrows pointing away from the vertex have to be in different triangles to get a fork.}
\label{fig:fork1}
\end{figure}

\begin{figure}
[hbt] \centering
\includegraphics[height=6 cm]{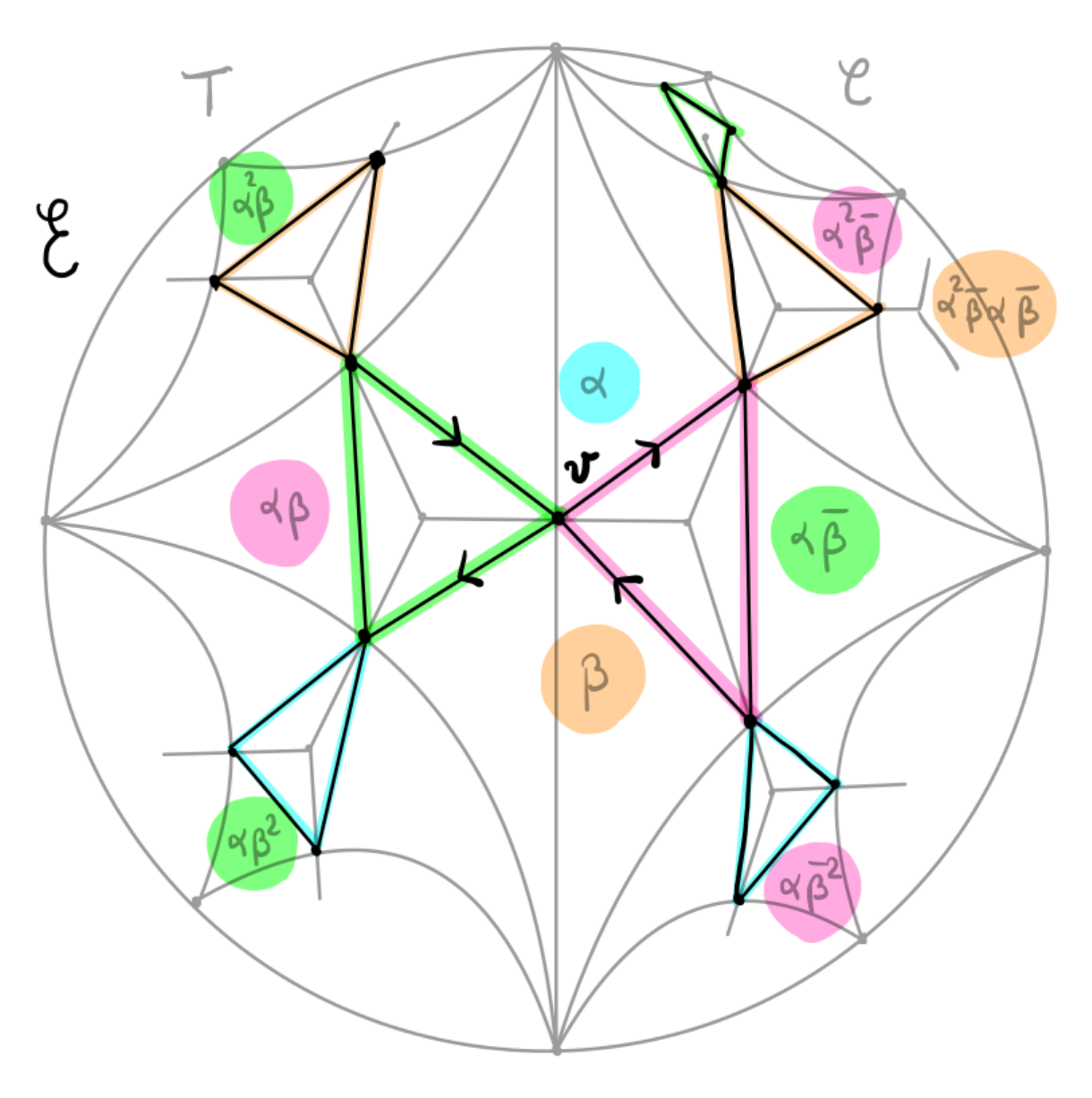}
\caption{A fork with the labeling of the neighboring regions}
\label{fig:fork2}
\end{figure}

\begin{Definition}
We say that a vertex $v  \in \Ec^{(0)}$ is a {\it fork} for $\rho$ if there are (at least) two edges oriented by $\rho$ having $v$ as a source and are of different colors (for the four-coloring). Visually, $v$ is a fork if two arrows point away from $v$ in two different triangles. See Figure \ref{fig:fork1}.

 Equivalently, the vertex $v = (X , Y ; Z, T) \in \mathcal{E}^{(0)}$ is a {\it fork} for $\rho$ if the following conditions hold:
$$\begin{cases}|\Tr\rho(XY^{-1}) | &< \max \{ |\Tr \rho(XY^2) | , |\Tr \rho (X^2Y) | \} \\
 &\mbox{ and } \\
  |\Tr \rho(XY)| &< \max \{ |\Tr \rho(XY^{-2}) | , |\Tr \rho (X^{-2}Y) | \}.\end{cases}$$

In terms of the four traces $(x,y,z,t)$ these conditions can be written as :
$$\begin{cases} |t| < |t- x \overline y + yz| \\ or \\ |t| < |t - x \overline{y} + \overline{xz}|, \end{cases} \quad \mbox{ and } \begin{cases} |z| < |z-xy+\overline{xt}| \\ or \\ |z| < |z - xy + \overline{y}t |. \end{cases} $$
We will call these inequalities the \emph{fork conditions}. See Figure \ref{fig:fork2}.
\end{Definition}

\begin{Lemma}\label{fork}
Let $c \in \C$, $[\rho] \in \X_c$, and $v = (X,Y) \in \Ec^{(0)}$. If $v$ is a fork for $\rho$, then we have that $\min \{|x| , |y| \} \leq \max\{6,\sqrt{|\Re(c)|}\}$.
\end{Lemma}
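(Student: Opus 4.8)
The plan is to argue by contradiction: suppose $v=(X,Y;Z,T)$ is a fork for $\rho$ and yet $\min\{|x|,|y|\}>C:=\max\{6,\sqrt{|\Re(c)|}\}$. The first step is a normalization by the obvious symmetries. The automorphisms $(\alpha,\beta)\mapsto(\beta,\alpha)$ and $(\alpha,\beta)\mapsto(\alpha,\beta^{-1})$ of $F_2$ act on the data by $(x,y,z,t)\mapsto(y,x,z,\bar t)$ and $(x,y,z,t)\mapsto(x,\bar y,t,z)$ respectively; one checks directly from the formulas for the four neighbouring traces that each of them carries the two inequalities defining a fork to the same two inequalities (possibly with the two options inside a bracket, or the two brackets themselves, interchanged), and each preserves $\min\{|x|,|y|\}$. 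Applying them if necessary we may assume $|x|\ge|y|$ and $|t|\ge|z|$.

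Next I would pin down the sizes of $z$ and $t$. Since $C\ge 6>2\sqrt[4]3$ and $C\ge\sqrt{|\Re(c)|}$, Lemma \ref{bound} applies to $v$ and gives $|t|<5|x||y|$, hence $|z|\le|t|<5|x||y|$. Moreover the first half of the proof of Lemma \ref{bound} can be re-run with the hypothesis $|x|,|y|>6$ in place of $|x|,|y|\ge C$ (this only strengthens the contradiction reached from $|t|\le\tfrac14|x||y|$, since $|x|^2+|y|^2>72$ already suffices there), upgrading the lower bound to $\tfrac14|x||y|<|t|$; in particular $|t|>\tfrac32\max\{|x|,|y|\}$. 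Finally, completing the square in the relation $P=2\Re(c)$ gives the identity
\[
|z-xy|^2+|t-x\bar y|^2 \;=\; 2\Re(c)+|x|^2|y|^2-|x|^2-|y|^2+3,
\]
whose right-hand side, under our hypotheses $|x|,|y|>6$ and $|\Re(c)|\le C^2$, is positive and bounded above by a quantity very close to $|x|^2|y|^2$; this forces $|z-xy|$ and $|t-x\bar y|$ to be at most $(1+\varepsilon)|x||y|$ for a tiny $\varepsilon$.

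With these bounds in hand, I would substitute into the fork conditions and derive a contradiction. Writing the four neighbouring traces as $N_1=t-x\bar y+yz$, $N_2=t-x\bar y+\overline{xz}$, $N_3=z-xy+\overline{xt}$, $N_4=z-xy+\overline{y}t$, one expands $|N_1|^2-|t|^2$ and $|N_2|^2-|t|^2$ using $t=(t-x\bar y)+x\bar y$, and $|N_3|^2-|z|^2$ and $|N_4|^2-|z|^2$ using $z=(z-xy)+xy$. Each such difference splits as a \emph{main term} of the form $\pm|y|^2(|z|^2-|x|^2)$, $\pm|x|^2(|z|^2-|y|^2)$, $\pm|x|^2(|t|^2-|y|^2)$ or $\pm|y|^2(|t|^2-|x|^2)$, plus a \emph{cross term} $2\Re\big(\overline{(t-x\bar y)}(\cdots)\big)$ or $2\Re\big(\overline{(z-xy)}(\cdots)\big)$; the cross term is controlled by Cauchy--Schwarz together with the size estimates above and the bound $|z-xy|,|t-x\bar y|\le(1+\varepsilon)|x||y|$ coming from the completed-square identity. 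Splitting into the finitely many cases according to which option of each fork bracket is being used, and according to the relative sizes of $|x|,|y|,|z|,|t|$, one verifies in every case that at least one of the four fork inequalities cannot hold, contradicting the assumption that $v$ is a fork.

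I expect this last step to be where essentially all of the work lies. The estimates are genuinely tight — this is why the constant in the statement is $6$ rather than the $2\sqrt[4]3$ of Lemma \ref{bound}, and why one must use the precise algebraic shape of $N_1,\dots,N_4$ together with the quadratic relation $P=2\Re(c)$, not merely the crude bounds $|z|,|t|\le 5|x||y|$ or the moduli of the traces. Organizing the case analysis so that every case closes with uniform constants is the main obstacle.
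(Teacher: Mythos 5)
Your overall strategy --- argue by contradiction assuming $\min\{|x|,|y|\}>C:=\max\{6,\sqrt{|\Re(c)|}\}$ and exploit the relation $P=2\Re(c)$ together with the fork inequalities --- is exactly the paper's idea, and the preliminary steps you lay out are sound. The symmetry normalization is correct; Lemma~\ref{bound} gives $|z|\le|t|<5|x||y|$; re-running the first part of its proof with the contradiction hypothesis $|t|\le\tfrac14|x||y|$ does indeed yield $|t|>\tfrac14|x||y|>\tfrac32\max\{|x|,|y|\}$; and the completed-square identity and the resulting bound on $|z-xy|$ and $|t-x\bar y|$ are both correct.

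The problem is that the step where the argument must actually close --- expanding $|N_i|^2-|t|^2$ and $|N_i|^2-|z|^2$ as a main term plus a cross term and then verifying, case by case, that the fork inequalities cannot all hold --- is not carried out, and you say yourself that you expect that step to hold essentially all of the work. As written the proposal therefore stops exactly where the proof must be finished; this is a genuine gap. Moreover, the decomposition you describe is pushed toward unnecessarily tight comparisons: the ``main term'' $|y|^2(|z|^2-|x|^2)$ can have either sign, and the cross term has the same order of magnitude, so the bound $|z-xy|,|t-x\bar y|\le(1+\varepsilon)|x||y|$ alone does not force a sign. It is not at all clear that the case analysis would close without importing further structural information.

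For comparison, the paper's proof is substantially more direct and avoids any expansion of $|N_i|^2$. Taking (WLOG) the fork inequalities $|t|>|t-x\bar y+yz|$ and $|z|>|z-xy+\overline{xt}|$, a single reverse triangle inequality applied to each yields the linear estimates
\[
|z|<\frac{2|t|}{|y|}+|x|
\qquad\text{and}\qquad
|t|<\frac{2|z|}{|x|}+|y|.
\]
Adding these and rearranging (using $|x|,|y|>C\ge6$) gives $|z|+|t|<\frac{2C}{C-2}\max\{|x|,|y|\}$, whence
\[
|2\Re(xy\overline z)+2\Re(\overline{x}yt)|\le 2|xy|\,(|z|+|t|)<|x|^2|y|^2.
\]
Together with the easy observation $|z|^2+|t|^2-3>0$ (from the lower bound in Lemma~\ref{bound}), the identity $P=2\Re(c)$ then forces $P>|x|^2+|y|^2>2C^2\ge 2|\Re(c)|$, a contradiction. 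Notice this argument leaves an entire term $|x|^2+|y|^2$ of slack, so your worry that ``the estimates are genuinely tight'' does not apply to the intended route; the tightness you anticipate is an artifact of trying to compare $|N_i|^2$ with $|t|^2$ and $|z|^2$ directly rather than combining the fork inequalities additively.
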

	
\begin{proof}
Without loss of generality, and by using the fork conditions, we can assume that $|t| > | t - x\bar y +yz |$ and $|z| > |z -xy +\overline{xt}|$. Let $C = \max\{6,\sqrt{|\Re(c)|}\}$ and assume by contradiction that $\min \{|x|,|y|\} > C \geq 6$. 
	
Note that $|t - x\bar y +yz | \geq |yz-x\bar y|-|t| \geq  |y||z|-|x||y| - |t|$. So, since we have $|t| > | t - x\bar y +yz|$ by the fork assumption, we deduce that $2|t| +|x||y| > |y||z|$, and hence we obtain: 
	$$|z| < \frac{2 |t|}{|y|} + |x|.$$
Similarly, as $|z -xy +\overline{xt} | \geq |\overline{tx}-xy|-|z| \geq |t||x|-|x||y|- |z|$ and $|z| > |z -xy +\overline{xt}|$, we get: 
						$$|t| < \frac{2 |z|}{|x|} + |y|.$$
We combine both inequalities to get
	\begin{align*}
		|z|+|t| & <  2 \left( \frac{|t|}{|y|} + \frac{|z|}{|x|} \right) + (|x| + |y|)\\
		& < 2 \frac{|t| + |z|}{C} + 2\max\{ |x| , |y| \}.
	\end{align*}
From this it follows that $$|z|+|t| < \frac{2C}{C-2} \max \{|x| , |y|\}.$$
	
Recall that we have 
	$$P=|x|^2 |y|^2 + |x|^2 + |y|^2 + |z|^2 + |t|^2 - 2 \Re (xy\overline z ) - 2 \Re (\overline{x} y t ) - 3.$$
	
Since $C \geq 6$, we have that $\frac{4C}{C-2} \leq C < \min\{|x|,|y|\}$. Hence, using the inequality above, we get:
\begin{eqnarray*}
|2 \Re (xy\overline z ) + 2 \Re (x \overline y t )| \leq 2 |xy| (|z| + |t|) &<& \frac{4C}{C-2} |xy| \max \{|x| , |y|\}\\ &<& |xy| \min \{|x| , |y|\} \max \{|x| , |y|\} = |x|^2 |y|^2.\end{eqnarray*}
Moreover, using Lemma \ref{bound}, we can see that $$|z|^2 + |t|^2 -3 \geq \max (|z| , |t|)^2 -3 \geq \frac{C^2}{16} \max(|x|, |y|)^2 - 3 \geq \frac{C^4}{16}-3 > 0.$$
This implies that:$$P=|x|^2 |y|^2 + |x|^2 + |y|^2 + |z|^2 + |t|^2 - 2 \Re (xy\overline z ) - 2 \Re (x \overline y t ) - 3  > |x|^2+|y|^2 > 2 C^2.$$  
This gives a contradiction, as $P = 2 \Re (c) \leq  2 C^2$. 
	\end{proof}

As we will see, the conclusion of the Fork Lemma (Lemma \ref{fork}) still holds for vertices that satisfy the fork conditions ``up to epsilon,'' and for certain arguments we will need the notion of an $\varepsilon$--fork. More precisely, we give the following definition.

\begin{Definition}
Let $\varepsilon >0$. A vertex $v \in \Ec$ is an $\varepsilon$--{\it fork}, if changing the value of one of the four regions around $v$ by less than $\varepsilon$ makes it a fork. This means that $v$ has (at least) two arrows pointing away from it, or pointing towards it but such that the difference between the values at the opposing regions of the edge, say $x, x' \in \C$ are such that $\vert \vert x \vert - \vert x' \vert \vert < \varepsilon$. 
\end{Definition}

The Fork Lemma (Lemma \ref{fork}) above can be generalized to $\varepsilon$--forks.

\begin{Corollary}
	Let $c \in \C$, $[\rho] \in \X_c$, and $v = (X,Y) \in \Ec^{(0)}$. Let $C = \max\{6,\sqrt{|\Re(c)|}\}$ and $\varepsilon_0 = \varepsilon_0(C) = C^2 \frac{C-6}{4} > 0$.  For all $\varepsilon < \varepsilon_0$, if $v $ is an $\varepsilon$--fork, then $\min \{ |x|, |y| \} < C$.
	\end{Corollary}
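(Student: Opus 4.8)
The plan is to bootstrap off the Fork Lemma~\ref{fork}, re-running its proof but allowing each of the quantities entering the fork conditions to be perturbed by less than $\varepsilon$. Concretely, suppose $v = (X,Y;Z,T)$ is an $\varepsilon$--fork with $\varepsilon < \varepsilon_0(C)$, and assume by contradiction that $\min\{|x|,|y|\} \geq C$. By definition of $\varepsilon$--fork there exist complex numbers $\tilde t, \tilde z$ (obtained from the relevant neighbouring traces by moving one region's value by less than $\varepsilon$) realising genuine fork inequalities $|\tilde t| > |\tilde t - x\bar y + yz|$ (or the companion with $\overline{xz}$) and $|\tilde z| > |\tilde z - xy + \overline{xt}|$ (or the companion with $\bar y t$). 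Since these perturbations change each side of the inequalities by at most $\varepsilon$, the two derived bounds in the proof of Lemma~\ref{fork} become $|z| < \frac{2|t|}{|y|} + |x| + \frac{c_1\varepsilon}{|y|}$ and $|t| < \frac{2|z|}{|x|} + |y| + \frac{c_2\varepsilon}{|x|}$ for small absolute constants $c_i$; combining them as before yields $|z| + |t| < \frac{2C}{C-2}\max\{|x|,|y|\} + O(\varepsilon/C)$.

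From here I would follow the original argument verbatim: the cross terms satisfy $|2\Re(xy\bar z) + 2\Re(x\bar y t)| \leq 2|xy|(|z|+|t|)$, and feeding in the perturbed bound together with $C \geq 6$ (which gives $\tfrac{4C}{C-2} \leq C$) shows this is still strictly less than $|x|^2|y|^2$ provided the $\varepsilon$--error is small enough — this is exactly where the hypothesis $\varepsilon < \varepsilon_0(C) = C^2\frac{C-6}{4}$ is used, to absorb the extra term into the gap between $\tfrac{4C}{C-2}|xy|\max\{|x|,|y|\}$ and $|x|^2|y|^2$. The estimate $|z|^2 + |t|^2 - 3 > 0$ via Lemma~\ref{bound} goes through unchanged (one may need $|t| \geq |z|$ up to relabelling, which is harmless since the fork conditions are symmetric in the two edges at $v$). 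Concluding, $P = |x|^2|y|^2 + |x|^2 + |y|^2 + |z|^2 + |t|^2 - 2\Re(xy\bar z) - 2\Re(x\bar y t) - 3 > |x|^2 + |y|^2 > 2C^2$, contradicting $P = 2\Re(c) \leq 2C^2$.

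The only genuinely delicate point is bookkeeping which quantities get perturbed and by how much: an $\varepsilon$--fork is defined by moving \emph{one} of the four region values around $v$, so one must check that in each of the four combinations of fork conditions the perturbation affects the two displayed inequalities in a controlled way (changing $|\tilde t|$, $|t - x\bar y + yz|$, $|\tilde z|$, or $|z - xy + \overline{xt}|$ by at most $\varepsilon$ via the triangle inequality), and then track the constant in front of $\varepsilon$ through the linear combination step. I expect this to be the main obstacle — not conceptually hard, but requiring care to confirm that $\varepsilon_0(C) = C^2\frac{C-6}{4}$ is exactly the threshold that makes the final strict inequality survive. Everything else is a line-by-line perturbation of the proof of Lemma~\ref{fork}.
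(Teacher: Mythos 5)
Your proposal is correct and runs along exactly the same lines as the paper's proof: assume $\min\{|x|,|y|\} > C$ for contradiction, write the $\varepsilon$--perturbed fork inequalities, re-derive the bounds $|z| < \tfrac{2|t|}{|y|} + |x| + \tfrac{\varepsilon}{C}$ and $|t| < \tfrac{2|z|}{|x|} + |y| + \tfrac{\varepsilon}{C}$, combine to get $|z|+|t| < \tfrac{2C}{C-2}\max\{|x|,|y|\} + \tfrac{2\varepsilon}{C-2}$, use $\varepsilon < \varepsilon_0$ together with $\max\{|x|,|y|\} > C$ to absorb the error term, and conclude $P > 2C^2$. The only cosmetic difference is where you locate the absorption (you phrase it in the cross-term bound $\tfrac{4C}{C-2}|xy|\max\{|x|,|y|\}$ versus $|x|^2|y|^2$, the paper absorbs one step earlier in $|z|+|t| < \tfrac{C}{2}\max\{|x|,|y|\}$), but both accountings give the identical threshold $\varepsilon_0 = C^2\tfrac{C-6}{4}$.
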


\begin{proof}
We will follow an argument similar to the one used to prove the Fork Lemma (Lemma \ref{fork}). Assume by contradiction that $\min \{|x|,|y|\} > C \geq 6$. Let $\varepsilon < \varepsilon_0$. Assume $v$ is an $\varepsilon$--fork. We keep the same notation for $x,y,z,t$ as before. The fork conditions give 
	$|t| > |t - x \overline y + yz | - \varepsilon $ and $|z| > |z - xy + \overline{xt} | - \varepsilon$. The same steps as before give:
	$$|z| < 2\frac{|t|}{|y|} + |x| + \frac{\varepsilon}{C} \quad \mbox{ and } \quad |t| < 2 \frac{|z|}{|x|} + |y| + \frac{\varepsilon}{C}.$$
	
	We combine both inequalities and obtain $$|z| + |t| < \frac{2C}{C-2} \max \{ |x| , |y| \} + 2 \frac{\varepsilon}{C-2}.$$
	As $\varepsilon < \varepsilon_0$, we obtain directly that $|z|+|t| < \dfrac{|C|}{2} \max\{|x|,|y|\}$, since $C < \max\{|x|,|y|\}$.
	
	Now, using the hypothesis  $C < \min \{ |x| , |y| \}$, we get:
	$$|2 \mathfrak{R} (xy\overline z) + 2 \mathfrak{R} (\overline{x} y t) | \leq 2 |xy| (|z|+|t|) < |x|^2 |y|^2.$$
	
	Using reasoning similar to the previous proof, we can then see that $P > 2C^2$, which gives a contradiction. So we have that $\min \{ |x| , |y| \} \leq C$, as we wanted to prove.
\end{proof}

\subsubsection{Connectedness}

Recall that we have the following identification $\Omega = \mathcal{C}^{(0)} = \mathcal{S}$. 

For any $K \geq 0$, and $[\rho] \in \X$, we denote $\Omega_\rho (K)$ the set of regions with traces smaller than $K$:
$$\Omega_\rho (K) = \{ \gamma \in \Omega \mid  |\tr (\rho (\gamma))| \leq K \}\subset \HH.$$
Since every element $\gamma \in \Omega$ correspond to the closure of a region in the complement of the (properly embedded trivalent) tree $\mathcal{T}$ in $\HH$, as defined in Section \ref{ss:simple}, we consider the set $\Omega_\rho (K)$ as a subset of $\HH$. Using the Fork Lemma, in this section we deduce that, for $K$ large enough, these sets are connected when represented as subsets of $\HH$.

\begin{Lemma}
	Let $c \in \C$, and $[\rho] \in \X_c$. For all $C \geq  \max\{6,\sqrt{|\Re(c)|}\}$, the set $\Omega_\rho (C)$ is connected.
\end{Lemma}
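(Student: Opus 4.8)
The plan is to show that if $\gamma_1,\gamma_2\in\Omega_\rho(C)$ then there is a path in the dual tree $\mathcal{T}$ (equivalently, a chain of pairwise-adjacent regions) connecting them, all of whose regions lie in $\Omega_\rho(C)$. Equivalently, working in $\mathcal{E}$, I want to show the subgraph of $\mathcal{E}$ spanned by edges $(X;Y,Z)$ all of whose endpoints have small trace is connected, and then transfer back to $\Omega\subset\HH$. Since $\HH\setminus\mathcal{T}$ is cut into regions $\Omega\cong\Sc$, and two regions are adjacent exactly when the corresponding curves bound a common triangle of $\mathcal{C}$, connectedness of $\Omega_\rho(C)$ as a subset of $\HH$ is equivalent to connectedness of the induced subgraph on $\mathcal{C}^{(0)}$.

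The key idea is a descent/retraction argument driven by the Fork Lemma. Suppose $\Omega_\rho(C)$ is disconnected, and pick two regions $U,V$ in distinct components. Consider a shortest edge-path in $\mathcal{T}$ joining a triangle incident to $U$ to a triangle incident to $V$; along this path the sequence of trace-moduli of the regions one crosses must, at some point, leave $[0,C]$ and come back, so there is a vertex $v=(X,Y;Z,T)\in\mathcal{E}^{(0)}$ "on the far side" of the connected component containing $U$, i.e.\ a vertex at which, following the tree outward away from the component, the traces increase. Concretely, one shows that at a vertex $v$ separating two small regions from each other across a large region, $v$ must be a fork: there are at least two arrows of distinct colors (among the four-coloring) pointing away from $v$, because the two small regions adjacent to $v$ have strictly smaller trace than the large regions one would need to pass through. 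The Fork Lemma \ref{fork} then forces $\min\{|x|,|y|\}\le\max\{6,\sqrt{|\Re(c)|}\}\le C$; combining this with the hypothesis that all of $x,y,z,t$ around $v$ that lie "on the small side" are $\le C$, one deduces that in fact $v$ itself, together with a neighbor, already connects the two regions inside $\Omega_\rho(C)$, contradicting the choice of $U,V$ in distinct components.

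More carefully, the step I would carry out is: (i) reduce to showing that no vertex $v$ of $\mathcal{E}$ can be an "isolated outward fork" relative to $\Omega_\rho(C)$, i.e.\ there is no $v$ all of whose four surrounding regions have trace $>C$ while two regions "behind" it have trace $\le C$; (ii) observe that such a configuration makes $v$ a fork in the sense of the Definition preceding Lemma \ref{fork} (two arrows of different colors pointing away, since the behind-regions are strictly smaller); (iii) invoke Lemma \ref{fork} to get $\min\{|x|,|y|\}\le C$, contradicting that the regions around $v$ all have trace $>C$; (iv) conclude that from any region in $\Omega_\rho(C)$ one can always move to an adjacent region of trace $\le C$ that is strictly closer (in the tree metric on $\mathcal{T}$) to any prescribed target region of $\Omega_\rho(C)$, giving by induction a path inside $\Omega_\rho(C)$ between any two of its elements. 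I expect the main obstacle to be the bookkeeping in step (iv): making precise the notion of "moving strictly closer in $\mathcal{T}$" while staying inside $\Omega_\rho(C)$, and ruling out the degenerate cases where the target region is the large region itself or where equalities $|t|=|t'|$ occur on the separating edges (handled, as in Section \ref{orientation}, by choosing orientations arbitrarily there, since these do not affect the large-scale structure). The Fork Lemma does all the real analytic work; the remaining argument is purely combinatorial on the trivalent tree $\mathcal{T}$.
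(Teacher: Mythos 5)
Your overall strategy---argue by contradiction, walk along a path between two hypothetical components of $\Omega_\rho(C)$, and locate a fork---is the same idea the paper uses, but your argument as written has a genuine gap in the cases where the two components are separated by only one or two triangles of $\mathcal{C}$. The paper splits by the minimal distance between components and handles distance $1$ (a vertex $v=(X,Y;Z,T)$ with $|z|,|t|\le C$ but $|x|,|y|>C$) and distance $2$ (an edge $(X,Y,T;Z,Z')$ with $|z|,|z'|\le C$ but $|x|,|y|,|t|>C$) by \emph{direct} estimates on $P$, showing $P>2C^2\ge 2\Re(c)$; the Fork Lemma is invoked only when the distance is $\ge 3$, where one can genuinely find a fork strictly in the interior of the separating path. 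Your step~(ii), which asserts that the ``isolated outward fork'' configuration forces $v$ to be a fork in the sense of the Definition preceding Lemma~\ref{fork}, is exactly the point where this fails: the fork condition is a comparison between $|t|,|z|$ and the traces of the four \emph{new} regions around $v$ (such as $|t-x\bar y+yz|$), and from $|z|,|t|\le C$, $|x|,|y|>C$ alone one cannot conclude these inequalities. For instance $|t-x\bar y+yz|\ge |y|\bigl(|x|-|z|\bigr)-|t|$ can be negative when $|x|$ is only slightly above $C$ and $|z|$ only slightly below, so $v$ need not be a fork there. Indeed, the paper rules this configuration out not by showing it is a fork but by showing it is impossible outright.

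You also misidentify where the difficulty lies: you flag step~(iv) (the descent bookkeeping) as the main obstacle, but that part is routine once forks are available. The real work is in the base cases, and those require the two additional estimates that bypass the Fork Lemma. So to repair the argument you should add: (a) a direct computation showing that a vertex $v=(X,Y;Z,T)$ with $|z|,|t|\le C$ and $|x|,|y|>C$ forces $P>2C^2$, a contradiction; (b) the analogous computation for an edge with two small opposite regions $Z,Z'$ and three large regions $X,Y,T$; and then (c) run your fork-based argument only when the separating path has length at least $3$, which is exactly where the first and last oriented edges point in opposite directions and a fork must occur strictly in between.
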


\begin{proof} Assume by contradiction that $\Omega_\rho (C)$ is not connected. We make an argument based on the minimal distance between two connected components of $\Omega_\rho (C)$. Recall that if $v = (X,Y ; Z, T) \in \mathcal{E}^{(0)}$, then we have :
$$P=|x|^2 |y|^2 + |x|^2 + |y|^2 + |z|^2 + |t|^2 - 2 \Re (xy\overline z ) - 2 \Re (\overline{x} y t ) - 3 = 2 \Re (c),$$ which implies $P \leq 2C^2$.

If the distance is $1$. Then we have a vertex $v = (X , Y; Z,T) \in \mathcal{E}^{(0)}$ such that $|z| , |t| \leq C$ and $|x| , |y| > C$. In that case we have:
$$|2 \Re (xy\overline z) + 2 \Re (\overline x y t)| < 4C |xy| = 5C |xy| - C|xy| < |x|^2 |y|^2 - 3,$$
since $5C < C^2 < |x||y|$ and since $C |x||y| \geq C^3 > 3$. This gives: $$P > |x|^2 + |y|^2 \geq 2C^2,$$ which is a contradiction.

If the distance is $2$, then we have an edge $(X,Y,T ; Z ,Z')$. We can assume, without loss of generality, that $|z| \leq  C$, $|z'| = |z+\overline{yt} - xy| \leq C$, and $|x|, |y|, |t| >C $. In that case the inequality $|z+\overline{y}t - xy| \leq C$ implies that $C < |t| \leq \frac{C+|z|}{|y|} + |x| < |x| + 2.$ This gives: $$|2 \Re (xy\overline z) + 2 \Re (\overline x y t)|< 2C |xy| + 2 |xy| (|x| + 2) < 2 |xy| (C+|x|+2)< |x|^2 |y|^2,$$
where the last inequality follows from $$2 (C+|x|+2) = |x| \left(2 + \frac{4}{|x|}+\frac{2C}{|x|}\right) < |x| \left(2 + \frac{3C}{|x|}\right) < |x| 5 < |x| C < |x||y|.$$ Since $|t|^2-3> C^2-3 >0$ and $|x|^2 + |y|^2 +|z|^2> 2 C^2$, we also find that $P> 2C^2$, which gives the same contradiction.

If the distance is greater than or equal to $3$, then we can consider the shortest path between these components and orient the edges along this path according to $\rho$. The first and last oriented edges are pointing in opposite directions (each towards one of the connected components). So there exists a vertex in the middle of the path that is a fork for $\rho$. So there is a region with trace less than $C$ with a smaller distance to each connected component, which contradicts the minimality of the distance.
	
\end{proof}

	\subsection{Escaping rays and Neighbors}\label{escaping}
		Let $P$ be a geodesic ray in the edge graph, starting at a vertex $v_0$ and consisting of edges $e_n$ joining $v_n$ to $v_{n+1}$. Such a geodesic only passes once through each triangle of the edge graph. We say that such an infinite geodesic ray is an \emph{escaping ray} if for all $n \in \N$, each edge $e_n$ is directed from $v_n$ to $v_{n+1}$. Given a representation $[\rho] \in \mathfrak{X}_c$, an infinite geodesic ray is called an \emph{escaping ray for $\rho$} if it is an escaping ray with respect to the orientation induced by $\rho$, see Section \ref{orientation}.

		We will prove that the existence of escaping rays for a given representation $\rho$ will imply that either the ray will eventually be contained in the boundary of a region $X \in \mathcal{C}^{(0)}$  such that $\rho (X)$ only has eigenvalues on the unit circle (which is equivalent to $f(x) \leq 0$ according to Theorem \ref{corol_f} ), or there are infinitely many regions with value bounded by a constant depending only on the boundary  parameter $c \in \C$.
		 
		First we consider the case of an escaping ray that is included in the boundary of a given region in $\mathcal{C}^{(0)}$ , and then we will describe the general case. 

	\subsubsection{Neighbors around a region}

	In this subsection we are interested in the behavior of the values of the traces of regions that are all neighbors of a central one.  Let $\a \in F_2$. The neighbors of the region associated to $\a$ are all associated to elements of the form  $\g_n = \a^n \b$ for all $n \in \Z$. 
	
	Let $c \in \C$ and let $[\rho] \in \X_c$ be a representation. Let $x = \tr (\rho (\a))$ and consider the sequence defined by $u_n = \tr (\rho (\g_n))$ for all $n \in \Z$. The trace identity gives the following relation:
	\begin{equation}\label{u_n}
		\forall n \in \Z, \ u_{n+3} = x u_{n+2}  - \overline{x} u_{n+1} + u_n .
	\end{equation}
	
	To study the solutions of this recurrence relation we define $X_n = \begin{pmatrix} u_{n+2} \\ u_{n+1} \\ u_n \end{pmatrix} \in \mathbb{C}^3$ and consider the matrix $M_x = \begin{pmatrix} x & - \overline x & 1 \\ 1 & 0 & 0 \\ 0 & 1 & 0 \end{pmatrix}$, so that we have $X_{n+1} = M_x X_n$.  The matrix $M_x$ is conjugate to a matrix in $\rm{SU} (2,1)$ and its characteristic polynomial is given by $\chi_x(s) = s^3 - x s^2 + \overline x s - 1$. We can use Corollary \ref{corol_f} to determine the behavior of the eigenvalues of $M_x$, using the function $f$ defined by $f(x) = \mathrm{Res} (\chi_x , \chi_x')$, and hence of the sequence $(u_n)_{n\in \Z}$ depending on the value of $f(x)$. 
	
	If $f(x)= 0$, then the coefficients of the matrix $M_x^n$ grow at most quadratically in $n$, and hence the growth of the sequence $(X_n)_{n\in \Z}$, and subsequently of the sequence $(u_n)_{n\in \Z}$, is at most quadratic, see Appendix \ref{app:fzero}.
	
	When $f(x) \neq 0$, we can diagonalize the matrix $M_x$ as $M_x = P D P^{-1}$ with 
	$$P = \begin{pmatrix} \lambda & \frac{\bar \lambda}{\lambda} & \frac{\, 1\, }{\bar \lambda} \\ 1 & 1 & 1 \\ \frac{1}{\lambda} & \frac{\, \lambda \,}{\bar \lambda} & \bar \lambda \end{pmatrix}, \quad D = \begin{pmatrix} \lambda & 0 & 0 \\ 0 & \frac{\bar \lambda}{\lambda} & 0 \\ 0 & 0 &   \frac{\, 1\, }{\bar \lambda}\end{pmatrix}, \;\; \text{ and } \;\; x = \lambda + \frac{\bar \lambda}{\lambda} + \frac{\, 1\, }{\bar \lambda}.$$
	
Hence we get $X_n = P D^n P^{-1} X_0,$ where $X_0 = \begin{pmatrix} u_2 \\ u_1 \\ u_0 \end{pmatrix}$. 

Let $(e_1, e_2, e_3)$ be the basis of $\C^3$ formed by the columns of the matrix $P$, which correspond to the eigenvectors of the matrix $M_x$. We can decompose $X_0 = m_1 e_1 + m_2 e_2 + m_3 e_3$ with $(m_1, m_2, m_3) \in \C^3$. In that case, we get that:
	
	$$X_n = m_1 \lambda^n e_1 + m_2 \left( \frac{\bar \lambda}{\lambda} \right)^n e_2 + m_3 \left( \frac{1}{\bar \lambda} \right)^n e_3.$$
	
	If $f(x)< 0$, then $|\lambda| = 1$ and we see that the sequence $(X_n)_{n\in \Z}$ stays bounded in $\C^3$, and subsequently the sequence $(u_n)_{n\in \Z}$ is bounded as the projection of $X_n$ on $\C \cdot e_3$. 
	
	When $f(x) > 0$, then we have $|\lambda| > 1$. If $m_1 m_3 \neq 0$ we see that the sequence $X_n$ grows exponentially in both directions. However, it is possible to have $m_1 m_3 = 0$ and in that case the sequence will stay bounded in one direction. The purpose of the next lemma is to prove that the bound on such a sequence will only depend on the boundary parameter $c$.
	
	\begin{Lemma}\label{prop:bounded_neighbors}
		There exists a constant $D = D(c)$, such that if the sequence $(u_n)_{n\in \N}$ defined above with $f(x) >0$ does not grow exponentially in both directions, then there are infinitely many terms of the sequence such that $|u_n| < D$. 
	\end{Lemma}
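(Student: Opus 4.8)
The plan is to reduce to the case $m_1=0$, to show that then $|u_n|\to|m_2|$ as $n\to+\infty$, and to bound $|m_2|$ by a function of $c$ using that every pair $(\rho(\a),\rho(\a^n\b))$ is a free basis of $\rho(F_2)$ whose commutator is conjugate to $[\a,\b]$. If instead $m_3=0$, replace $u_n$ by $v_n:=\overline{u_{-n}}=\tr(\rho(\a^n\b^{-1}))$: this sequence satisfies the same recursion \eqref{u_n} with the same $x$, has its coefficient along the eigenvalue $\lambda$ equal to $0$, and $\{|v_n|\}=\{|u_{-n}|\}$, so the conclusion transfers. Thus assume $m_1=0$, so that $u_n=m_2\,\mu^n\,(\lambda/\bar\lambda)+m_3\,\nu^n\,\bar\lambda$ with $\mu:=\bar\lambda/\lambda$ of modulus $1$ and $\nu:=1/\bar\lambda$ of modulus $1/|\lambda|<1$; then $u_{n+1}-\mu u_n\to0$, $u_{n-1}-\bar\mu u_n\to0$, and $|u_n|\to|m_2|=:L$ as $n\to+\infty$. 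It therefore suffices to bound $L$.

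\textbf{Algebraic input.} For every $n$ the pair $(\a,\a^n\b)$ generates $F_2$, the element $\a(\a^n\b)\a^{-1}(\a^n\b)^{-1}$ is conjugate in $F_2$ to $[\a,\b]$, $\a(\a^n\b)=\a^{n+1}\b$, and $\a(\a^n\b)^{-1}$ is conjugate to $(\a^{n-1}\b)^{-1}$. Substituting $\mathbf X=\rho(\a)$, $\mathbf Y=\rho(\a^n\b)$ into Lawton's relations and using $\tr(w^{-1})=\overline{\tr(w)}$ in $\SUtwo$ yields, for all $n$,
\[
P\big(x,u_n,u_{n+1},\overline{u_{n-1}}\big)=2\Re(c),\qquad Q\big(x,u_n,u_{n+1},\overline{u_{n-1}}\big)=|c|^2 .
\]

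\textbf{Passing to the limit.} Letting $n\to\infty$ in the first identity (with $u_{n\pm1}\approx\mu^{\pm1}u_n$ and $|u_n|\to L$) and simplifying by means of $x=\lambda+\bar\lambda/\lambda+1/\bar\lambda$ gives
\[
|x|^2-3+L^2\big(|x|^2+3-4\Re(x\lambda/\bar\lambda)\big)=2\Re(c).
\]
Writing $\lambda=re^{i\theta}$ and $a:=r+1/r\ge2$, one checks the two identities $|x|^2+3-4\Re(x\lambda/\bar\lambda)=a(a-2\cos3\theta)$ and $f(x)=(a^2-4)(a-2\cos3\theta)^2$; since $f(x)>0$ forces $a>2$, the coefficient of $L^2$ above is positive, so
\[
L^2=\frac{2\Re(c)+3-|x|^2}{a(a-2\cos3\theta)},\qquad\text{and in particular }|x|^2\le 2\Re(c)+3 .
\]
If $a\ge 2+\delta$ for some fixed $\delta>0$, then $a(a-2\cos3\theta)\ge a(a-2)\ge (2+\delta)\delta$, hence $L^2\le (2\Re(c)+3)/((2+\delta)\delta)$; this is a bound depending only on $c$ and $\delta$, and it already proves the Lemma outside a neighbourhood of the unit circle for $\lambda$.

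\textbf{The near-cusp case and the main obstacle.} The only way $L$ can blow up is $a\to2$, i.e. $\lambda$ approaching the unit circle; together with $|x|^2\le 2\Re(c)+3$ this forces $x$ to approach a cusp of the deltoid $f^{-1}(0)$, that is $x\to3\zeta_3$, where all three eigenvalues of $\rho(\a)$ collide and the eigenbasis splitting used above degenerates. In this regime the plan is to use the second identity: the limiting trace--tuple is $(x,u_n,u_{n+1},\overline{u_{n-1}})\to(3\zeta_3,p,p,\bar p)$ for some $p\in\C$ with $|p|=L$ (after the symmetry $x\mapsto\omega x$ one may take the cusp to be $3$), and a direct substitution shows $Q(3,p,p,\bar p)=9$ \emph{independently of} $L$; hence $|c|^2\to9$, and combined with $\Re(c)\le|c|$ and $2\Re(c)+3\ge|x|^2\to9$ this forces $c\to3\zeta_3$. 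Thus for $c$ outside a neighbourhood of $3\zeta_3$ the quantity $L$ is bounded in terms of $c$, and tracking the constants through the two limiting identities produces the required $D(c)$; the remaining values $c\in3\zeta_3$ are handled by a second--order expansion of $P$ and $Q$ at the cusp (equivalently, by a compactness argument applied to the closed solution set of the two limiting identities for fixed $c$). The heart of the proof --- and its only delicate point --- is exactly this near--cusp estimate: away from the cusps the single relation $P=2\Re(c)$ suffices, but at the cusps one genuinely needs the second Lawton relation $Q=|c|^2$ to keep $L$ under control, and the (mechanical but lengthy) expansion of $Q$ there is where the real work lies.
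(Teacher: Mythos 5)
Your reduction to $m_1=0$, the choice of limiting tuple $(x,B,B\bar\lambda/\lambda,\bar B\bar\lambda/\lambda)$, and the derivation of $L^2=\dfrac{2\Re(c)+3-|x|^2}{a(a-2\cos 3\theta)}$ with $a=r+1/r$ match the paper's strategy and arithmetic exactly (including the identity $f(x)=(a^2-4)(a-2\cos3\theta)^2$, which is correct). The first half of your argument is sound. But the second half --- the ``near-cusp'' case --- is precisely where the proof is not finished, and you say so yourself: you describe a program (``a second-order expansion of $P$ and $Q$ at the cusp, equivalently a compactness argument'') without executing it. As written, the proposal establishes the bound only when $a\ge 2+\delta$ for a fixed $\delta$, and the case $a\to 2$ is left as a sketch. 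A limit argument showing $Q\to 9$ (which is indeed a correct computation: $P(3,p,p,\bar p)=6$ and $Q(3,p,p,\bar p)=9$ for all $p$) only tells you that $L$ \emph{cannot tend to infinity} when $c\ne 3\zeta_3$; it does not by itself produce a finite $D(c)$, and it is silent about $c$ equal to or near $3\zeta_3$. You would also need to rule out the possibility that, for $c=3\zeta_3$, unbounded $L$ actually occurs --- a priori your argument allows it.

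The paper closes this gap by a different, and decisive, use of the $Q$-relation. Substituting the $P$-derived expression for $|B|^2$ into the $Q$-equation and simplifying, one finds that all the $s$-dependence cancels and
\[
Q=\frac{\left(Pr^4-r^6+1\right)\left(Pr^2+r^6-1\right)}{r^4\left(r^2+1\right)^2},
\]
a polynomial equation in $r=|\lambda|$ alone, with coefficients determined by $c$. Hence $r$ is confined to a finite set of values depending only on $c$; since $f(x)>0$ excludes $r=1$, the admissible $r$ are bounded away from $1$ by a $c$-dependent constant, which immediately bounds $|B|^2$ by a $c$-dependent constant because the denominator $a(a-2\cos3\theta)\ge a(a-2)$ is then bounded below. (In particular, for $c=3$ the only root turns out to be $r=1$, so the bounded-on-one-side regime is vacuous there.) This $s$-independence of the $Q$-constraint is the key algebraic fact your proposal is missing; once you have it, no expansion at the cusp and no compactness argument are needed. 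I would encourage you to carry out the simplification of $Q$ (it is mechanical, as you suspected) and observe that the result depends on $r$ only --- this replaces the open-ended near-cusp discussion with a clean, finite argument.
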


	\begin{proof}
	
	With the notation above, we are necessarily in the case $m_1 m_3 = 0$. We can assume, without loss of generality, that $m_1 = 0$,  so that we have 
	$$X_0 = m_2 e_2 + m_3 e_3 \;\text{ and }\;\;X_n = m_2 \left( \frac{\bar \lambda}{\lambda} \right)^n e_2 + m_3 \left( \frac{1}{\bar \lambda} \right)^n e_3.$$
	
	Since $|\lambda| > 1$, we see that the sequence $X_n$ will accumulate on a circle of radius $|m_2 |$ in the complex line $e_2$. More precisely, for all $\varepsilon >0$, there exists $n_0 \in \N$, such that for all $n \geq n_0$, there exists $\theta_n \in \R$ such that $\Vert X_n - |m_2| e^{i\theta_n} e_2 \Vert < \varepsilon$. In other words, for $n$ large enough, the vector $X_n$ gets arbitrarily close to the set $\{ |m_2| e^{i\theta}  e_2 , \theta \in \R \}$.
	
	Hence, to prove the lemma, it suffices to find a uniform bound on $|m_2|$ that only depends on the boundary parameter $c$. Let $\varepsilon$ be arbitrarily small, and $n \geq n_0$ as above. 
	
By definition of the sequence, we have :
 $$x = \tr (\rho (\a)), \ u_n = \tr (\rho (\alpha^{n} \beta)), \ u_{n+1} = \tr (\rho (\alpha^{n+1} \beta )), \text{  and  } u_{n+2} = \tr ( \rho (\alpha (\alpha^{n+1}) \beta)).  $$
 
 	By denoting $\beta' = \alpha^{n+1} \beta$ we can write these values as :  	
 	$$x = \tr (\rho (\a)),  \quad u_{n+1} = \tr (\rho ( \beta' )), \quad u_{n+2} = \tr ( \rho (\alpha  \beta' )), \quad \overline{u_n} = \tr (\rho (\a \beta'^{-1} )),$$ so we can identify the quadruple $(x, u_{n+1} , u_{n+2} , \overline{u_n})$ as the character of a representation in $\X_c$. This means that the quadruple $(x, u_{n+1} , u_{n+2} , \overline{u_n})$ will satisfy the equations for $P$ and $Q$ with $y=u_{n+1} , z = u_{n+2}$ and $t = \overline{u_n}$. 
	
	Using our hypothesis on $n$, this quadruple is arbitrarily close to a quadruple of the form $\left(x, |m_2| e^{i \theta}, |m_2[ e^{i\theta} \frac{\bar \lambda}{\lambda} , |m_2| e^{-i\theta} \frac{\bar \lambda}{\lambda} \right),$ for some $\theta \in \R.$ 
	
	By continuity of the expression relating $P$ and $Q$ with the quadruple of trace coordinates, we know that this ``limit'' quadruple also satisfies the  equations for $P$ and $Q$ and corresponds to an equivalence class of a representation in $\X_c$.

	We can first write the equation for $P$ with the quadruple $\left(x, |m_2| e^{i \theta}, |m_2[ e^{i\theta} \frac{\bar \lambda}{\lambda} , |m_2| e^{-i\theta} \frac{\bar \lambda}{\lambda} \right)$, which gives : 
	\begin{align*}
		P & = |x|^2 |m_2|^2 + |x|^2 + 3 |m_2|^2 - 2 \Re \left(x |m_2| e^{i\theta} \displaystyle\frac{\, \lambda \,}{\bar \lambda} |m_2|e^{-i\theta} \right) - 2 \Re \left( \overline{x} |m_2|e^{i\theta} \frac{\, \bar \lambda\, }{\lambda} |m_2|e^{-i\theta} \right) -3 \\
		& = |m_2|^2 \left( |x|^2 - 4 \Re \left( x \frac{\lambda}{\, \bar \lambda\, } \right) +3\right) - 3 + |x|^2.
		\end{align*}
		
So we obtain an expression for $|m_2|$ in terms of $P$ and $x$:
		\begin{equation}\label{eqn:B} |m_2|^2 = \dfrac{P + 3 - |x|^2}{|x|^2 - 4 \Re \left( x \frac{\, \lambda\, }{\bar \lambda} \right) +3}.\end{equation}

The equation for $Q$ gives :	
	\begin{align*}
		&Q =  |m_2|^4 \left(5 + |x|^2 + 2 \Re \left( 2 \left(\frac{\bar \lambda}{\lambda}\right)^3- 2 \overline x  \frac{\bar \lambda}{\lambda} - x  \left(\frac{\bar \lambda}{\lambda}\right)^2 \right) \right) \\
		& + |m_2|^2 \left( |x|^4+3|x|^2-18+2\Re \left( 2 x^2 \frac{\bar \lambda}{\lambda}-3 x  \left(\frac{\bar \lambda}{\lambda}\right)^2+6 \overline x  \frac{\bar \lambda}{\lambda} - 2 |x|^2 \overline x  \frac{\bar \lambda}{\lambda}+\overline x^2 \left( \frac{\bar \lambda}{\lambda}\right)^2-x^3 \right) \right) \\
		& + 9 - 6|x|^2+2\Re (x^3).
\end{align*}
	
Using the expression of $|m_2|^2$ found above and replacing $x = \lambda + \frac{\bar \lambda}{\lambda} + \frac{1}{\bar \lambda}$ and writing $\lambda = r e^{is}$ with $r >0$ and $s \in \R$, we can obtain (for example, using {\it Mathematica}) the following formula:
\begin{equation}\label{eqn:Q} Q = \dfrac{(Pr^4 - r^6 + 1)(Pr^2 + r^6 - 1)}{r^4 (r^2+1)^2}.\end{equation}

This means that $r$ is one of the finitely many solutions of the equation above. At this point, we have already proved that if the sequence $(u_n)_{n\in \N}$ does not grow exponentially, then $|\lambda|$ and hence $|x|$ can only take finitely many values that only depend on $P$ and $Q$. Moreover, we cannot have $r=1$ as a solution as in that case we would have $f(x) = 0$ which is excluded from our consideration.

Finally, if we look at Equation \ref{eqn:B} we see that its denominator can be written as:

$$|x|^2 - 4 \Re \left( x \frac{\lambda}{\bar \lambda} \right) +3 = |\lambda|^2 - 2 \Re \left( \frac{\lambda^2}{\bar \lambda}\right) - 2 \Re \left(\frac{\lambda}{\bar{\lambda}^2}\right) + 2 + \frac{1}{|\lambda|^2} =\dfrac{(|\lambda|^2+1)|\lambda^2 - \bar{\lambda} | ^2 }{|\lambda|^4},  $$
which makes it clear that the denominator of the expression can only be equal to $0$ when $\lambda \in \zeta_3$ and hence when $|\lambda| = 1$. This means that if $r = |\lambda|$ is one of the finitely many solutions of Equation \eqref{eqn:Q}, then the expression of $|m_2|^2$ is bounded by a value that depends only on $P$ and $Q$, and hence that depends only on $c$. 

We can conclude that there exists $D = D(c)$, only depending on $c$, such that $|m_2| < D$. Thus, if the sequence $(u_n)_{n\in \Z}$ does not grow exponentially for $n \to + \infty$, then $|u_n|$ will converge to a value smaller than $D$.
\end{proof}
	
\begin{Remark} We can use the previous proof to produce an algorithm to find the constant $D$.  Precisely, first solve for $r^2$ using Equation \eqref{eqn:Q} and then plug it into Equation \eqref{eqn:B} to obtain a formula for $|m_2|$ in terms of $P,Q$ and $s$.  A value for $D$ can be taken to be $|m_2|+\epsilon$.  So we need to find the maximum of $|m_2|$ on the circle (parametrized by $s$), which can always be done numerically.
\end{Remark}

For the rest of the paper, for all $c \in \C$, we set 
\begin{equation}\label{def:Mc}
M(c):=\max \{6, \sqrt{|\Re (c)|} , D(c)\}. \end{equation}

Lemma \ref{prop:bounded_neighbors} implies the following result.

\begin{Lemma}\label{functionH}
Let $c \in \C$. There exists a function $H_c \co \C \rightarrow \R \cup \{ \infty \}$ with the following properties, for all $x \in \C$: 
\begin{enumerate}
\item[$(1)$] $H_c(x) = H_c(\overline x)$.
\item[$(2)$] $H_c(x) \geq M(c)$.		
\item[$(3)$]  $H_c(x) = \infty$ if and only if $f(x)\leq 0$. 
\item[$(4)$] If $f(x) >0$, then for any sequence $u_n$ as above satisfying Equation \eqref{u_n} and coming from a representation $[\rho] \in \X_c$, there exists $n_1, n_2 \in \Z \cup \{ \pm \infty \}$ such that:
\begin{enumerate}
\item[$(i)$] $|u_n| \leq H_c(x)$ if and only if $n_1 \leq n \leq n_2$;
\item[$(ii)$] $|u_n|$ is monotonically decreasing for $n < n_1$ and increasing for $n > n_2$. 
\end{enumerate}
\end{enumerate}
\end{Lemma}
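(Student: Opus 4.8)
The plan is to \emph{construct} the function $H_c$ explicitly and then verify the four properties. Set $H_c(x)=\infty$ whenever $f(x)\le 0$; this gives property $(3)$ in one direction, makes the hypothesis of $(4)$ vacuous there, and is trivially consistent with $(1)$ and $(2)$. When $f(x)>0$ we will set $H_c(x):=\max\{M(c),\Psi(x,c)\}$ for a finite quantity $\Psi(x,c)$ depending only on $x$ and $c$. Taking the maximum with $M(c)$ gives $(2)$; the identity $f(\bar x)=f(x)$ together with the fact that the eigenvalue data of $M_{\bar x}$ is the complex conjugate of that of $M_x$ lets us take $\Psi$ symmetric, giving $(1)$; and finiteness of $\Psi$ gives the remaining direction of $(3)$. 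So everything reduces to producing $\Psi(x,c)$ and checking $(4)$, and from now on we assume $f(x)>0$.

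By Corollary~\ref{corol_f}, the matrix $M_x$ of \eqref{u_n} is loxodromic, so $M_x=PDP^{-1}$ with eigenvalues $\lambda,\ \bar\lambda/\lambda,\ 1/\bar\lambda$ of moduli $r>1$, $1$ and $1/r$, where $r=|\lambda|$ depends only on $x$; one also checks that $f(x)>0$ forces $|x|>1$ (the region $\{f\le 0\}$ contains the closed unit disc). Writing $u_n=a\lambda^{n}+b\,(\bar\lambda/\lambda)^{n}+c'\,(1/\bar\lambda)^{n}$ (with $a,b,c'$ depending on $(u_0,u_1,u_2)$, hence on $\rho$) and dividing by the unimodular factor $(\bar\lambda/\lambda)^n$ — which changes neither $|u_n|$ nor $b$ — reduces us to $u_n=a\lambda^n+b+c'\mu^n$ with $|\mu|=1/r$. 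Now argue by cases. If $a=c'=0$ then $|u_n|\equiv|b|$, and the explicit computation proving Lemma~\ref{prop:bounded_neighbors}, applied to this constant quadruple (which lies in $\X_c$), bounds $|b|$ by $D(c)$; take $n_1=-\infty$, $n_2=+\infty$. If exactly one of $a,c'$ vanishes, say $a=0\ne c'$, then $u_n\to b$ with $|b|\le D(c)$ as $n\to+\infty$ by Lemma~\ref{prop:bounded_neighbors}, while $|u_n|\sim|c'|r^{-n}\to\infty$ as $n\to-\infty$; since $|b|$ is controlled, the comparison $|u_{n-1}|/|u_n|\to r>1$ becomes effective and produces $N_0=N_0(x,c)$ past which $|u_n|$ strictly decreases toward the valley, and one takes $n_2=+\infty$. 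The two-sided case $a\ne 0\ne c'$ is handled the same way on each tail, giving strict monotone growth for $n\le -N_0$ and for $n\ge N_0$.

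What remains, and what $(4)(i)$ genuinely needs beyond ``$|u_n|\to\infty$ in both directions'', is to control the ``low window'' so that $\{n : |u_n|\le H_c(x)\}$ is a single interval. Evaluating the identity $P=2\Re(c)$ at the vertex $(x,u_n,u_{n+1},\overline{u_{n-1}})\in\X_c$ and completing squares yields
\begin{equation*}
(|u_{n-1}|-|x|\,|u_n|)^2+(|u_{n+1}|-|x|\,|u_n|)^2+(1-|x|^2)\,|u_n|^2\ \le\ 3+2\Re(c)-|x|^2 ,
\end{equation*}
which, using $|x|>1$, gives: $|u_{n\pm1}|=O(\sqrt{|\Re(c)|})$ whenever $|u_n|$ is small; a bound $|u_{n\pm1}|/|u_n|\le |x|+\sqrt{|x|^2-1}$ on consecutive ratios; and, by the same identity near the bottom of the valley (where $u_n\approx u_{n\pm1}$, so $P\approx(|x-2|^2-1)|b|^2+|x|^2-3$), a bound on the central value $|b|$, and hence on $\min_n|u_n|$, in terms of $x$ and $\Re(c)$ — with the quadratic $Q=|c|^2$ brought in to handle the degenerate case $|x-2|=1$. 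Feeding these facts back into the eigenvalue expansion shows that the set of indices at which $|u_n|$ is not yet under the exponential control of the previous paragraph has both length and height bounded in terms of $x$ and $c$; letting $\Psi(x,c)$ be this height bound and $H_c(x)=\max\{M(c),\Psi(x,c)\}$ puts $\{n : |u_n|>H_c(x)\}$ inside the two strictly monotone tails, so $\{n : |u_n|\le H_c(x)\}=[n_1,n_2]$ and $(4)(i)$ and $(4)(ii)$ follow.

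The main obstacle is precisely this uniformity: $H_c$ must depend on $x$ and $c$ only, so one has to rule out sequences whose low window is wide or high for reasons invisible to $x$ and $c$. In the one-sided cases this is exactly the content of Lemma~\ref{prop:bounded_neighbors}; in the two-sided case it rests on the values of $P$ (and, if necessary, $Q$), which are conserved along the recurrence, together with the quadratic estimate above, and turning the ``center'' heuristic into a genuinely $\rho$-independent bound $\Psi(x,c)$ is the delicate point. By contrast the monotonicity in $(4)(ii)$ is routine once the comparison $|u_{n\pm1}|\approx r^{\mp1}|u_n|$ on the tails has been made effective.
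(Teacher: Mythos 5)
Your high‐level architecture matches the paper's exactly: set $H_c=\infty$ on $\{f\le 0\}$, diagonalize $M_x$ with eigenvalues $\lambda,\ \bar\lambda/\lambda,\ 1/\bar\lambda$, split into cases according to which of the two non‐unimodular coefficients vanish, invoke Lemma~\ref{prop:bounded_neighbors} for the one‐sided case, and handle the $-n$ direction by the symmetry $x\leftrightarrow\bar x$. The computational machinery you bring in, however, is genuinely different: you derive a completed‐square reformulation of the $P$‐equation,
$(|u_{n-1}|-|x||u_n|)^2+(|u_{n+1}|-|x||u_n|)^2+(1-|x|^2)|u_n|^2\le 3+2\Re(c)-|x|^2$,
together with the observation $f(x)>0\Rightarrow|x|>1$, and you propose to use these to control the ``low window'' directly. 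The paper instead reindexes so that $|m_3|\le|m_1|\le|\lambda|^2|m_3|$, extracts from the $P$ and $Q$ equations a bound $K_c(x)$ on both $|m_1m_3|$ and $|m_2|/\sqrt{|m_1m_3|}$, and estimates $\bigl(|u_{n+2}|-|u_{n+1}|\bigr)/\sqrt{|m_1m_3|}$ from below; the bounds on $|m_1|$, $|m_3|$ that follow from the normalization give both the monotonicity threshold $L_c(x)$ and the conversion from $|u_{n+1}|\ge H_c(x)$ to $|\lambda|^n\ge L_c(x)$.

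The gap in your proposal is precisely at the point you flag yourself: in the two‐sided case $a\ne 0\ne c'$ you never actually produce the $\rho$‐independent bound $\Psi(x,c)$. ``Feeding these facts back into the eigenvalue expansion shows that the set of indices \dots has both length and height bounded in terms of $x$ and $c$'' is the entire content of what must be proved, and the completed‐square inequality alone does not deliver it. The difficulty is that without an a priori upper bound on the product $|a\,c'|$ (the paper's $|m_1m_3|\le K_c(x)$), the ``valley'' centered where $|a|\,r^n\approx|c'|\,r^{-n}$ can have floor value of order $\sqrt{|a||c'|}$ with no control in terms of $x$ and $c$, and the number of steps needed for the exponential term to dominate the constant $b$ likewise scales with the coefficient magnitudes. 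Your $P$‐inequality constrains consecutive triples $(u_{n-1},u_n,u_{n+1})$ but does not by itself constrain the global coefficient data $(a,b,c')$; and your one‐sided step invokes Lemma~\ref{prop:bounded_neighbors}, which (by its proof) applies only when $m_1m_3=0$, so it cannot be ``applied the same way on each tail'' of a genuinely two‐sided sequence. To close the gap you should carry out the paper's reindexing argument (normalize $|m_1|/|m_3|\in[1,r^2]$, then use both $P$ and $Q$, not merely $P$, to bound $|m_1m_3|$ and $|m_2|/\sqrt{|m_1m_3|}$), rather than postponing it as a heuristic. A small correction along the way: at the bottom of the valley the quadruple is $\bigl(x,B,B\bar\lambda/\lambda,\bar B\bar\lambda/\lambda\bigr)$ and the coefficient of $|B|^2$ in $P$ is $|x|^2-4\Re\!\left(x\lambda/\bar\lambda\right)+3$, not $|x-2|^2-1$; these agree only when $\lambda$ is real, and the non‐vanishing of the former for $|\lambda|\ne 1$ is exactly what Lemma~\ref{prop:bounded_neighbors} verifies via the factorization $\bigl(|\lambda|^2+1\bigr)\,|\lambda^2-\bar\lambda|^2/|\lambda|^4$.
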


There are similar statements in previous works (\cite{bow_mar} 3.14,  \cite{tan_gen} 3.20, \cite{mal_ont} 4.15, and \cite{MaPa1} 3.4.1) that all rely on the same basic principle. We refer the interested reader to the proofs given in these papers that are in simpler cases. 
\begin{proof} We will omit certain computational details for clarity.

First, we simply set $H_c (x) = \infty$ when $f(x) \leq 0$.

	Now, let $x \in \C$, such that $f(x) > 0$. Assume that we have $[\rho] \in \X_c$, a region $X \in \Omega$ such that $x = \tr (\rho (X))$ and the sequence $(u_n)$ defined as before. So $(u_n)$ satisfies Equation \eqref{u_n} and is such that $(x, u_1, u_2, \overline{u_0} )$ is the character of the representation $\rho$. As discussed above, there exists $(m_1, m_2, m_3) \in \C^3$ such that  
	$$u_{n+1} = m_1 \lambda^n + m_2 \left( \frac{\bar{\lambda}}{\lambda} \right)^n + m_3 \left( \frac{1}{\bar \lambda} \right)^n.$$

	Assume first that $m_1 m_3 \neq 0$. Up to reindexing the sequence we can assume that $|m_3| \leq |m_1| \leq |\lambda|^2 |m_3|$. The quadruple $(x,u_1, u_2, \overline{u_0} )$ satisfies the equations for $P$ and $Q$, so using an argument similar to the one for finding the bound on $|m_2|$ in the proof of Lemma \ref{prop:bounded_neighbors}, we can show that there exists a constant $K_c (x)$ such that  $\frac{|m_2|}{\sqrt{|m_1m_3|}}$ and $|m_1 m_3|$ are both less than $K_c (x)$. One can see that 
	\begin{align*}
		\frac{|u_{n+2}| - |u_{n+1}|}{\sqrt{|m_1 m_3|}} &\geq |\lambda|^n \sqrt{\frac{|m_1|}{|m_3|} } ( |\lambda| - 1) - 2 \frac{m_2}{\sqrt{|m_1 m_3|}} - \sqrt{\frac{|m_3|}{|m_1|}} \frac{1}{|\lambda|^{n+1}} (|\lambda|+1) \\
		& \geq |\lambda|^n (|\lambda|-1) - 2 K_c(x) - (|\lambda|+1).
	\end{align*}
	
	Hence, if $|\lambda|^n$ is larger than some constant $L_c (x):= \frac{2K_c (x) + (|\lambda| + 1)}{|\lambda|-1}$, then we have that $|u_{n+2}| \geq |u_{n+1}|$. 
	
	On the other hand, as $|u_{n+1}| \leq |m_1| |\lambda|^n + |m_2| + |m_3| \frac{1}{|\lambda|^n}$, we infer that there exists $H_c(x)$ such that if $|u_{n+1}| \geq H_c (x)$, then $|\lambda|^n \geq L_c (x)$. Hence, for indices such that $|u_n| \geq H_c (x)$ we have that the sequence is monotonically increasing, which is the desired condition.
	
	We can choose $H_c (x)$ to be always bigger than $M(c)$ so that in the case $m_1 m_3 = 0$, Lemma \ref{prop:bounded_neighbors} gives the desired result. The sequence $u_{-n}$ behaves in a similar way, simply replacing $x$ with $\bar x$, and hence this shows that $H_c( x) = H_c (\bar x)$.  
	\end{proof}
	
\subsubsection{Escaping rays}
	
Using the constant $M(c)$ defined in (\ref{def:Mc}) we now prove : 

\begin{Lemma}\label{lem:escaping}
	Let $c \in \C$ and $[\rho] \in \X_c$. Set $M(c):=\max \{6, \sqrt{|\Re (c)|} , D(c)\}$. Suppose that $\{ e_n \}_{n\in \N}$ is an escaping ray. Then there are two cases: 
	\begin{enumerate}
		\item[$(1)$] there exists region $\a$ such that the ray is eventually contained in the boundary of $\a$, such that $f (x ) \leq 0$ with $x = \tr (\rho (\a))$, or 
		\item[$(2)$] the ray meets infinitely many distinct regions with trace smaller than $M(c)$.
	\end{enumerate}
\end{Lemma}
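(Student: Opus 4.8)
The plan is to analyze an escaping ray $\{e_n\}$ by tracking the four trace values $(x_n,y_n,z_n,t_n)$ around the successive vertices $v_n$ of the ray, and to exploit the fact that the orientation of each edge records which of the two ``opposite'' regions across that edge has the larger trace. The dichotomy in the conclusion will come from distinguishing whether or not the ray eventually stays in the boundary of a single region.

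First I would set up the trichotomy of cases based on how the ray sits inside the trivalent tree $\mathcal{T}$. Since an escaping ray passes through each triangle of $\mathcal{E}$ at most once, either (a) the ray eventually enters the boundary of a fixed region $\a$ and stays there, running through the neighbors $\g_n = \a^n\b$, or (b) the ray changes the ``central'' region infinitely often. In case (a) the relevant sequence is exactly the sequence $u_n = \tr(\rho(\a^n\b))$ studied in Lemma \ref{functionH}: escaping along this ray forces $|u_n|$ to be monotone (the orientation says each step goes toward the smaller-trace side, so the values are eventually monotone in one direction). If $f(x)>0$, Lemma \ref{functionH}(4) says the region $\{|u_n|\le H_c(x)\}$ is a finite interval $[n_1,n_2]$ and outside it the sequence is strictly monotone; but an escaping ray can only be consistently oriented ``outward'' if it never turns around, which contradicts having an interior interval unless that interval is a half-line or all of $\Z$ — and the latter cases are precisely where $|u_n|$ stays bounded, giving infinitely many regions with trace $\le M(c)$, i.e. conclusion (2). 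If instead $f(x)\le 0$ we are immediately in conclusion (1). So in case (a) I would argue: either $f(x)\le 0$ (case (1)), or $f(x)>0$ and the monotonicity from Lemma \ref{functionH} combined with the escaping orientation forces the sequence to stay bounded by $H_c(x)$, hence (using $H_c(x)\ge M(c)$ is not quite enough — one needs that the bounded portion is infinite; this is exactly the ``does not grow exponentially in both directions'' situation of Lemma \ref{prop:bounded_neighbors}) yields infinitely many terms below $D(c)\le M(c)$, which is conclusion (2).

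Next I would handle case (b), where the ray changes central region infinitely often. Here the key tool is the Fork Lemma (Lemma \ref{fork}) and the connectedness lemma. Suppose toward a contradiction that only finitely many regions met by the ray have trace $\le M(c)$; then past some index $N$ every vertex $v_n=(X_n,Y_n;Z_n,T_n)$ on the ray has at least one of $|x_n|,|y_n|$ exceeding $M(c)\ge\max\{6,\sqrt{|\Re(c)|}\}$, and in fact — using that the ray changes region infinitely often and Lemma \ref{bound} to propagate largeness — both coordinates $|x_n|,|y_n|$ and indeed all four become large. The escaping orientation means every edge along the ray points forward; I would show this forces each such vertex to be a fork (two forward-pointing arrows of distinct colors, since along a region-changing escaping ray the two triangles at $v_n$ are distinct colors and each contributes an outgoing arrow). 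But Lemma \ref{fork} says a fork vertex satisfies $\min\{|x_n|,|y_n|\}\le\max\{6,\sqrt{|\Re(c)|}\}\le M(c)$, contradicting the largeness of both coordinates. Hence infinitely many regions on the ray have trace $\le M(c)$, which is conclusion (2). One must be slightly careful: a single vertex being a fork requires outgoing arrows in \emph{both} the $\{t\text{-side}\}$ pair and the $\{z\text{-side}\}$ pair; along a region-changing ray the two consecutive edges at $v_n$ lie in the two different triangles at $v_n$, so their forward orientations do land in the two different color classes, giving the fork.

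The main obstacle I expect is the bookkeeping in case (a): precisely matching the combinatorial statement ``the escaping ray is eventually contained in $\partial\a$ and consistently oriented outward'' with the analytic statement of Lemma \ref{functionH}, namely that consistent outward orientation of the edges $e_n$ — each pointing from the larger of $\{|u_{n}|,|u_{n+?}|\}$ to the smaller, in the indexing of the neighbor sequence — is incompatible with $|u_n|$ having a finite ``valley'' $[n_1,n_2]$ unless $n_1=-\infty$ or $n_2=+\infty$. Getting the indexing conventions to line up (which neighbor $\g_{n+1}$ corresponds to the next edge of the ray, and how the monotonicity in Lemma \ref{functionH}(4)(ii) translates into the arrow directions) is the delicate point; the rest is a direct application of the Fork Lemma and Lemma \ref{bound}. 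A secondary subtlety is ruling out the ``$|t|=|t'|$ for infinitely many edges'' degeneracy, but as noted in Section \ref{orientation} the orientation there is chosen arbitrarily and does not affect the large-scale behavior, so this does not genuinely interfere with the argument.
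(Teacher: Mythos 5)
Your Case (a) (ray eventually trapped in the boundary of a single region) is essentially the paper's argument, modulo bookkeeping: an escaping ray in $\partial\alpha$ forces the neighbor sequence $(|u_n|)$ not to grow exponentially in the forward direction, so either $f(x)\le 0$ (conclusion (1)), or Lemma~\ref{prop:bounded_neighbors} gives infinitely many terms below $D(c)\le M(c)$ (conclusion (2)). That part is fine.

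Case (b) has a genuine gap. You claim that along a region-changing escaping ray, each vertex $v_n$ (once all four traces are large) is a \emph{fork}, with the justification that ``the two triangles at $v_n$ are distinct colors and each contributes an outgoing arrow.'' This is not true. At $v_n$ the edge $e_n$ (to $v_{n+1}$) does point away from $v_n$, so one triangle contributes an outgoing arrow. But the edge $e_{n-1}$ (from $v_{n-1}$) points \emph{into} $v_n$, not away, and the remaining two edges at $v_n$ (one per triangle, off the ray) have unknown orientation. ``Forward along the ray'' and ``away from $v_n$'' coincide for $e_n$ but are opposite for $e_{n-1}$. So you only have one outgoing arrow at $v_n$, which is not enough for a fork, and Lemma~\ref{fork} does not apply.

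This is precisely the point where the paper's proof uses the notion of $\varepsilon$-fork and its corollary rather than the Fork Lemma itself. The paper observes that, since the ray does not eventually stay in the boundary of a single region, the four colored sequences $|x_i|,|y_i|,|z_i|,|t_i|$ of traces along the ray are decreasing (because the ray is escaping) and bounded below, hence convergent; so the drop in trace across the incoming edge $e_{n-1}$ tends to zero. Thus for large $n$ the vertex $v_n$ is an $\varepsilon$-fork with $\varepsilon$ as small as one likes, and the $\varepsilon$-fork Corollary then forces $\min\{|x_n|,|y_n|\}<C\le M(c)$, yielding infinitely many distinct small-trace regions as the ray keeps moving. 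To fix your proof you should replace the appeal to Lemma~\ref{fork} by this convergence-plus-$\varepsilon$-fork step; as written, the key claim ``this forces each such vertex to be a fork'' is false.
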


\begin{proof}
	Let $(e_i)_{i\in \N}$ be an escaping ray. First, assume that the ray is eventually contained in the neighborhood of some region $\a$, and note $x = \tr (\rho (\a))$. If  $f(x) \leq 0$, then we are in the first case. Otherwise, we have $f(x) >0$. Note that the sequence $(|u_n|)_{n\in \N} $ of traces of regions around $x$ cannot be exponentially growing with respect to $n$, or else the ray would not be. From the previous lemma, it means that there are infinitely many distinct regions with trace smaller than $D (c)$.
	
Now assume that the escaping ray $(e_i)_{i \in \N}$ does not eventually end in the neighborhood of a given region. We consider a $4$-coloring of the complex $\mathcal{E}$ as described in Section \ref{ss:simple}. Let $(X_i)_{i\in \N}, (Y_i)_{i\in \N}, (Z_i)_{i\in \N}, (T_i)_{i\in \N}$ be the four sequences of regions corresponding to the four regions around the head of edge $e_i$, with the letters $X,Y,Z,T$ each corresponding to a given color. Let $x_i = \tr (\rho (X_i))$; respectively, for $y_i, z_i, t_i$.
	
	 As the escaping ray does not end in the neighbors of a given region, we see that the four sequences $|x_i|, |y_i|, |z_i|, |t_i|$ are infinite, decreasing, and bounded below. So the modulus of each sequence is converging towards some limit.
	
	For all $\e >0$, there exists $n_0$ such that, for all $n \geq n_0$, the vertex $v_n$ between the two consecutive directed edges  $e_n$ and $e_{n+1}$ is an $\e$-fork. For $\e$ small enough, we can then apply the $\e$-fork Lemma to conclude that for all $n \geq n_0$ there is at least one region around the vertex $v_{n}$ that has trace smaller than $C:= \max \{6 , \sqrt{|\Re (c)|} \}$. As the escaping ray is not contained in the neighborhood of a given region, we know that there will be infinitely many distinct such regions. 
\end{proof}

\subsection{Bowditch representations}\label{definition}

We can now define Bowditch representations. 
	
		\begin{Definition}\label{def_bow}
			Let $c \in \C$, and let $\rho \co F_2 \rightarrow \SUtwo$ be a representation such that $[\rho]\in\X_c$.  We say that $\rho$ is a {\it Bowditch representation} if
\begin{enumerate}
				\item $\forall \gamma \in \Sc$, we have that $\rho(\gamma)$ is loxodromic, and
				\item the set $\Omega(M) = \{ \gamma \in \Sc \ | \ \tr (\rho (\gamma)) < M \}$ is finite,
\end{enumerate}
where $M = M(c)$ as defined in (\ref{def:Mc}). 

We denote by $\mathfrak{X}_{BQ,c} = \mathfrak{X}_{BQ, c}(F_2 , \SUtwo)$ the set of conjugacy classes of representation in $\X_c$ that are Bowditch, and more generally, we denote  $$\mathfrak{X}_{BQ} = \bigcup_{c \in \C} \mathfrak{X}_{BQ, c}(F_2 , \SUtwo).$$ 
		\end{Definition}
		
		The following result is an immediate consequence of Lemma \ref{prop:bounded_neighbors} and Lemma \ref{lem:escaping}.
		\begin{Lemma}
			If $[\rho] \in \mathfrak{X}_{BQ}$, then there will be no escaping rays for $\rho$. Moreover, for any region $X \in \Omega$ and neighboring regions $\{Y_n\}$ around $X$, the absolute value of traces $|\tr\left(\rho(Y_n)\right)|$ will have exponential growth in both directions.
		\end{Lemma}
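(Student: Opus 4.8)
The plan is to combine the two preceding lemmas in the section — Lemma~\ref{prop:bounded_neighbors} and Lemma~\ref{lem:escaping} — with the defining conditions of a Bowditch representation. Fix $c\in\C$ and $[\rho]\in\mathfrak{X}_{BQ,c}$, so that by Definition~\ref{def_bow} every $\rho(\gamma)$ is loxodromic for $\gamma\in\Sc$ and the set $\Omega_\rho(M)$ of regions with trace bounded by $M=M(c)$ is finite.

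\emph{No escaping rays.} First I would argue by contradiction: suppose $\{e_n\}_{n\in\N}$ is an escaping ray for $\rho$. By Lemma~\ref{lem:escaping} one of two things happens. In case~$(1)$, the ray is eventually contained in the boundary of a single region $\alpha$ with $f(x)\le 0$ where $x=\tr(\rho(\alpha))$; but by Corollary~\ref{corol_f} (parts $(2)$ and $(3)$) the condition $f(x)\le 0$ means $\rho(\alpha)$ is either regular elliptic or has an eigenvalue of multiplicity $\ge 2$ with all eigenvalues on the unit circle — in either case $\rho(\alpha)$ is not loxodromic, contradicting the first Bowditch condition (note $\alpha\in\Sc$ since it labels a region in $\Omega\cong\Sc$). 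In case~$(2)$, the ray meets infinitely many distinct regions with trace smaller than $M(c)$, so $\Omega_\rho(M)$ is infinite, contradicting the second Bowditch condition. Hence no escaping ray exists.

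\emph{Exponential growth of neighbors.} For the second assertion, fix a region $X\in\Omega$ with $x=\tr(\rho(X))$ and let $\{Y_n\}$ with $Y_n$ corresponding to $\alpha^n\beta$ (for a suitable choice of representatives) be the neighbors around $X$, and set $u_n=\tr(\rho(Y_n))$, which satisfies the recurrence~\eqref{u_n}. Since $\rho(X)$ is loxodromic (first Bowditch condition), $f(x)>0$ by Corollary~\ref{corol_f}, so the associated matrix $M_x$ diagonalizes with an eigenvalue $\lambda$ of modulus $>1$ and we may write $u_{n+1}=m_1\lambda^n+m_2(\bar\lambda/\lambda)^n+m_3(1/\bar\lambda)^n$. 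I would then argue that $m_1 m_3\ne 0$: if instead $m_1 m_3=0$, then by Lemma~\ref{prop:bounded_neighbors} the sequence $(u_n)$ does not grow exponentially in both directions, so infinitely many terms satisfy $|u_n|<D(c)\le M(c)$, which would make $\Omega_\rho(M)$ infinite — again a contradiction. Therefore $m_1 m_3\ne 0$, and then the dominant term is $m_1\lambda^n$ as $n\to+\infty$ and $m_3(1/\bar\lambda)^n=m_3\lambda^n$ (since $|\lambda|=1/|\bar\lambda|$... rather: $1/\bar\lambda$ has modulus $|\lambda|$ when $|\lambda\bar\lambda/\lambda\cdot\ldots|$) — more carefully, the three eigenvalues are $\lambda$, $\bar\lambda/\lambda$, $1/\bar\lambda$ with moduli $|\lambda|>1$, $1/|\lambda|<1$, $|\lambda|>1$ respectively, so $|u_{n+1}|\sim|m_1|\,|\lambda|^n\to\infty$ as $n\to+\infty$ and $|u_{-n}|$ is governed by the eigenvalue $1/\bar\lambda$ of modulus $|\lambda|>1$ so $|u_{-n}|\sim|m_3|\,|\lambda|^n\to\infty$ as $n\to+\infty$, i.e.\ exponential growth in both directions, as claimed.

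\emph{Main obstacle.} The routine part is the bookkeeping with the eigenvalue moduli; the only genuine content is invoking Lemma~\ref{prop:bounded_neighbors} correctly to rule out $m_1 m_3=0$, and checking that every region appearing along an escaping ray or among the neighbors $\{Y_n\}$ genuinely corresponds to an element of $\Sc$ so that the Bowditch conditions apply to it. Both of these are already essentially established by the identification $\Omega\cong\mathcal{C}^{(0)}\cong\Sc$ and the setup of Section~\ref{escaping}, so I expect the proof to be short.
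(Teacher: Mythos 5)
Your proposal is correct and takes the same route the paper has in mind: the paper states this lemma as an ``immediate consequence'' of Lemma~\ref{prop:bounded_neighbors} and Lemma~\ref{lem:escaping} with no written proof, and your argument is precisely that deduction made explicit. One small slip worth fixing: you list the moduli of the three eigenvalues of $M_x$ as $|\lambda|>1$, $1/|\lambda|<1$, $|\lambda|>1$, but with $\lambda=re^{is}$ and $r>1$ they are in fact $|\lambda|=r>1$, $|\bar\lambda/\lambda|=1$, and $|1/\bar\lambda|=1/r<1$ — matching Corollary~\ref{corol_f}(1) (one outside, one on, one inside the unit circle). This does not affect your conclusion, since what you actually use is that $(1/\bar\lambda)^{-n}$ has modulus $r^n$, so the $m_3$-term dominates as $n\to-\infty$; but the stated ``eigenvalue $1/\bar\lambda$ of modulus $|\lambda|>1$'' is literally false and should be corrected.
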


	\subsection{Attracting subgraph} \label{sec:tree}
	
		\subsubsection{Attracting arc}
		
		Recall that we have the following identifications: $\Omega = \mathcal{C}^{(0)} = \mathcal{S}$. We will make use of Lemma \ref{functionH} to define an attracting subgraph in the neighborhood of certain regions. 
		
Let $c \in \C$, $[\rho] \in \mathfrak{X}_c$ and $K \geq M(c)$. For each region $X \in \Omega (K)$, we define:
		$$J_\rho (K ,X) := \bigcup \left\{ e \in \mathcal{E}^{(1)} | \ e = (X, Y_n, Y_{n+1}) \mbox{ and } |y_n| , |y_{n+1}| \leq \max\{H_c(x) , K\} \right\}.$$
		 This is a connected nonempty subarc in $\partial X$ with the following properties:
			\begin{enumerate}
				\item Every directed edge $\vec e_\rho$ such that $e \in \partial X$ that is not in $J_\rho (K, X)$ points towards $J_\rho (K, X)$.
				\item If $Y \in \Omega(K)$ and $e \in \partial X$ has one vertex $v = (X, Y) \in \mathcal{E}^{(0)}$, then $e \in J_\rho (K, X)$. 
			\end{enumerate} 
			
	This definition is similar to the definitions used in Tan-Wong-Zhang \cite{tan_gen} and Maloni-Palesi-Tan \cite{mal_ont}. However, in our setting, we need to define a subgraph that is slightly larger than this subarc, but with similar properties, because of the way we define a subtree to be ``attracting." Recall that the graph $\mathcal{E}$ is a ``trivalent tree of triangles" in the sense that it is a graph made of triangles, one for each triangle $T$ in $\mathcal{C}^{(2)}$. We want to define a bigger set $\widetilde{J}_\rho (K, X) \subset \mathcal{E}^{(1)}$ with the property that, if one edge $e$ is in $J_\rho (K, X)$, we want to add, in $\widetilde{J}_\rho (K, X)$, the whole triangle that $e$ is part of. It is clear that $J_\rho (K, X) \subset \widetilde{J}_\rho (K, X)$. We say that a triangle $\Delta \subset \mathcal{E}^{(1)}$ is in the boundary $\partial X$ of the region $X$ if one of its edges has this property. So we define the set $\widetilde{J}_\rho (K, X)$ as a connected union of triangles in $\partial X$ satisfying the following properties:
		\begin{enumerate}
			\item For each triangle $\Delta$ in $\partial X$ such that $\Delta$ is not in $\widetilde{J}_\rho (K,X)$, then for each vertex $v$ of $\Delta$, the directed edge from $v$ forming a geodesic toward $\widetilde{J}_\rho (K, X)$ is pointing towards $\widetilde{J}_\rho (K, X)$.\footnote{Note that for one edge of $\Delta$ this is an empty condition.}
			\item If $Y \in \Omega (K)$ and $\Delta$ is a triangle adjacent to $X$ and $Y$, then $\Delta$ is in $\widetilde{J}_\rho (K, X)$. 		 
		\end{enumerate}
	
	We can now explain better why we need to define this extension. Let $e$ be an edge which is part of a ``triangle" $\Delta$. It could happen that $e\in J_\rho (K, X)$ but no other edge of $\Delta$ is. In this situation, the geodesics from the vertex of $\Delta\setminus e$ to $J_\rho (K, X)$ are not unique and it could happen that it is not true any longer that both those geodesic paths point toward $J_\rho (K, X)$, as we will require in the definition of $\rho$--attracting below.   
	
		\subsubsection{Attracting subgraph}
	
\begin{Definition}
	A non-empty subgraph $T \subset \mathcal{E}$ is said to be $\rho$--{\it attracting}, if for every vertex $v\in \mathcal{E}^{(0)}$ such that $v \notin T$ and for every geodesic in $\mathcal{E}$ from $v$ to $T$, each $\rho$--oriented edge along the geodesic points towards $T$. 
\end{Definition}		
	
For all $K \geq M(c)$ we constructed a subarc $\widetilde{J}_\rho (K, X)$ for all regions $X \in \Omega = \Sc$. We can now define the graph $$T_\rho (K) := \bigcup_{X \in \Omega (K)} \widetilde{J}_\rho (K, X).$$
This set is a subgraph of $\mathcal{E}$ and we can prove the following result.
	\begin{Proposition}\label{attracting}
		If $K \geq M(c)$, then the subgraph $T_\rho(K)$ is connected and $\rho$-attracting.
	\end{Proposition}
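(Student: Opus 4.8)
The plan is to prove the two assertions separately: connectedness is essentially formal, while the attracting property is where the real work lies.

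\smallskip
\noindent\emph{Connectedness.} Since $K\geq M(c)\geq\max\{6,\sqrt{|\Re(c)|}\}$, the set $\Omega(K)=\Omega_\rho(K)$ is connected as a subset of $\HH$ by the connectedness lemma proved above. Because the closures of two distinct regions of $\HH\setminus\mathcal{T}$, whenever they meet, already meet along an edge of $\mathcal{T}$, any two regions $X,X'\in\Omega(K)$ are joined by a chain $X=X_0,X_1,\dots,X_\ell=X'$ of regions in $\Omega(K)$ with consecutive $X_i,X_{i+1}$ adjacent. For each such pair, property $(2)$ in the definition of $\widetilde J_\rho(K,\cdot)$ puts the (two) triangles of $\mathcal{E}$ incident to both $X_i$ and $X_{i+1}$ into $\widetilde J_\rho(K,X_i)\cap\widetilde J_\rho(K,X_{i+1})$. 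Since each $\widetilde J_\rho(K,X)$ is by construction a connected union of triangles, it follows that $T_\rho(K)=\bigcup_{X\in\Omega(K)}\widetilde J_\rho(K,X)$ is connected.

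\smallskip
\noindent\emph{The attracting property: set-up.} Suppose not. Then there is a vertex $v\notin T_\rho(K)$ and a geodesic $\sigma=(v=v_0,v_1,\dots,v_n)$ in $\mathcal{E}$ from $v$ to $v_n\in T_\rho(K)$, which we pick realizing $d(v,T_\rho(K))$ (so $v_0,\dots,v_{n-1}\notin T_\rho(K)$), along which some $\rho$-oriented edge $e_i=[v_i,v_{i+1}]$ fails to point towards $T_\rho(K)$; let $j$ be the largest index with $e_j$ pointing towards $v_j$. Recall that $\mathcal{E}$ is a trivalent tree of triangles: each vertex lies in exactly two triangles, which carry distinct colours (vertex‑adjacent triangles are coloured differently), and a geodesic through a vertex must use one edge from each of its two triangles, since it cannot traverse two edges of a single triangle (whose far endpoints are adjacent). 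Hence $e_j$ and $e_{j+1}$ lie in distinct triangles at $v_{j+1}$ and carry distinct colours.

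\smallskip
\noindent\emph{The two cases.} If $j+1=n$, then $e_{n-1}$ points away from $v_n\in T_\rho(K)$. Pick $X\in\Omega(K)$ with $v_n\in\widetilde J_\rho(K,X)$, and let $\Delta$ be the triangle of $\mathcal{E}$ containing $e_{n-1}$; it has $v_n$ as a vertex, hence has $X$ among its three regions, so $\Delta$ is in $\partial X$, and $\Delta\not\subseteq\widetilde J_\rho(K,X)$ (else $v_{n-1}\in\widetilde J_\rho(K,X)\subseteq T_\rho(K)$). Property $(1)$ of $\widetilde J_\rho(K,X)$, applied to $\Delta$ and its vertex $v_{n-1}$ — exactly the configuration the enlargement $J_\rho\rightsquigarrow\widetilde J_\rho$ was introduced to control — forces the directed edge $e_{n-1}$ from $v_{n-1}$ realizing a geodesic towards $\widetilde J_\rho(K,X)$ to point towards $\widetilde J_\rho(K,X)$, i.e.\ towards $v_n$, contradicting the choice of $j$. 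If instead $j+1<n$, then by maximality of $j$ the edge $e_{j+1}$ points towards $v_{j+2}$, hence away from $v_{j+1}$, while $e_j$ points towards $v_j$, also away from $v_{j+1}$; having distinct colours, these two edges make $v_{j+1}$ a fork for $\rho$. Writing $v_{j+1}=(X,Y)$, the Fork Lemma (Lemma~\ref{fork}) gives $\min\{|x|,|y|\}\leq\max\{6,\sqrt{|\Re(c)|}\}\leq K$, say $X\in\Omega(K)$. One then wants $v_{j+1}\in\widetilde J_\rho(K,X)$, contradicting $v_{j+1}\notin T_\rho(K)$: the four edges at $v_{j+1}$ split into two on $\partial X$ (one per triangle at $v_{j+1}$) and two on $\partial Y$; by Lemma~\ref{functionH} the traces of the neighbours of $X$ along $\partial X$ have a single ``valley'' outside which they are monotone, so (by the definition of the orientation and property $(1)$ of $J_\rho(K,X)$ and $\widetilde J_\rho(K,X)$) every edge of $\partial X$ outside $\widetilde J_\rho(K,X)$ points towards $\widetilde J_\rho(K,X)$; in particular no vertex of $\partial X$ lying outside $\widetilde J_\rho(K,X)$ can have both incident $\partial X$-edges pointing away from it. Tracking which of $e_j,e_{j+1}$ is an $X$-edge and in which direction it runs along $\partial X$ then excludes $v_{j+1}$ from lying outside $\widetilde J_\rho(K,X)$, giving the contradiction.

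\smallskip
\noindent\emph{Where the difficulty lies.} The connectedness half and the terminal case $j+1=n$ are direct consequences of the stated properties of $\widetilde J_\rho(K,X)$ and of the reason the enlargement of $J_\rho$ was introduced. The genuinely delicate step is the fork case: the Fork Lemma controls only $\min\{|x|,|y|\}$, so just one of the two regions at $v_{j+1}$ is known to be small, and forcing $v_{j+1}$ into $\widetilde J_\rho(K,X)$ requires combining the colouring, the precise shape of $\widetilde J_\rho(K,X)$ along $\partial X$ coming from the monotonicity in Lemma~\ref{functionH}, and a careful analysis of how $\sigma$ crosses $\partial X$ near $v_{j+1}$. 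This bookkeeping — only sketched above — is the part needing care; the corresponding step in \cite{tan_gen, mal_ont, MaPa1} provides a template to be adapted to the $\SUtwo$ setting.
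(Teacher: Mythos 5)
Your overall architecture matches the paper's: connectedness via connectedness of $\Omega(K)$ and a chain of adjacent regions, and the attracting property via (a) the observation that edges leaving the circular neighborhood of $T_\rho(K)$ must point inward (your case $j+1=n$) and (b) the Fork Lemma applied to a vertex where the orientation changes (your case $j+1<n$). The connectedness half and the terminal case are fine and essentially what the paper does.

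The genuine gap is exactly the place you flag: your fork case. You want to conclude $v_{j+1}\in\widetilde J_\rho(K,X)$, but this does not follow from the Fork Lemma, and it is in fact the wrong target. The Fork Lemma only bounds $\min\{|x|,|y|\}$, so writing $v_{j+1}=(X,Y)$ with $X\in\Omega(K)$ tells you nothing about $|y|$; if $|y|>\max\{H_c(x),K\}$, then the two $\partial X$--edges at $v_{j+1}$ have an unbounded endpoint and are not in $J_\rho(K,X)$, and there is no a priori reason for $v_{j+1}$ to land in the enlarged $\widetilde J_\rho(K,X)$ either. Moreover, the monotonicity statement you invoke from Lemma~\ref{functionH} gives only that the $\partial X$--edge forming a geodesic \emph{towards} $\widetilde J_\rho(K,X)$ points towards it (that is literally what property $(1)$ of $\widetilde J_\rho$ says); it does not forbid a vertex of $\partial X$ outside $\widetilde J_\rho(K,X)$ from having its other $\partial X$--edge (the one running away from $\widetilde J_\rho(K,X)$) also pointing outward, and that is exactly the configuration a fork can produce. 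So the claim ``no vertex of $\partial X$ outside $\widetilde J_\rho(K,X)$ has both incident $\partial X$--edges pointing away'' is not justified and the ``bookkeeping'' you defer is not merely routine. The paper's own proof takes a different exit here: from the fork it extracts a region of $\Omega_\rho(K)$ sitting on the geodesic, strictly between the attracting set and the terminal vertex, and then invokes the already-established connectedness of $\Omega_\rho(K)$ (equivalently, of $T_\rho(K)$) to get a contradiction, rather than trying to force $v_{j+1}$ into any particular $\widetilde J_\rho(K,X)$. You should replace your final paragraph of the fork case with an argument of this connectedness type.
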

	
	\begin{proof}
		 First, let us show that $\mathcal{T} = T_\rho (K)$ is connected. Let $e, e'$ be two edges of $\mathcal{T}$. Then $e$ and $e'$ belong to a triangle adjacent to  regions $X, X' \in \Omega(K)$. As $\Omega(K)$ is connected, we can find a sequence of regions $X = X_0 , X_1 , \dots , X_n = X'$ such that $X_k$ is adjacent to $X_{k+1}$ for all $k$. Hence there are triangles $\Delta_0, \ldots, \Delta_n$, such that each $\Delta_k$ is adjacent to both $X_k$ and $X_{k+1}$, and by construction this triangle is in $T_{\rho}(K)$. Since the union of triangles in $\widetilde{J}_{\rho} (K, X_k)$ is connected, the triangles $\Delta_k$ and $\Delta_{k+1}$ are connected in $\widetilde{J}_{\rho} (K, X)$, and hence the two edges $e$ and $e'$ are connected by a path in $\Tc$. 
		 
		 Second, let us show that $\mathcal{T} = T_\rho (K)$ is $\rho$-attracting. Let $e = (X,Y,Z ; T , T')$ be an edge that intersects $\Tc$ at a vertex $v = (X,Y) \in \mathcal{E}^{(0)}$, but that is not in $\Tc$ (so $e$ is in the circular neighborhood of $\Tc$). This means that the triangle $\Delta$ adjacent to $X,Y,T$ is in $\Tc$, but the triangle $\Delta'$ adjacent to $X,Y,Z$ is not. 

By construction, one of the regions $X,Y,T$ is in $\Omega_\rho (K)$. If $X$ or $Y$ is in $\Omega_\rho (K)$ then we can assume, without loss of generality, that $\Delta$ is in $\widetilde{J}_\rho (K, X)$ and $\Delta'$ is adjacent to $X$. In that case, the construction of $\widetilde{J}_\rho (K, X)$ ensures that the directed edges of $\Delta'$ are pointing towards $\Delta$. If $X$ and $Y$ are not in $\Omega_\rho (K)$, then we necessarily have $T \in \Omega_\rho (K)$. In that case, we cannot have $|t'| < |t|$, as we would have $T' \in \Omega_\rho (K)$, and this would contradict the connectedness of $\Omega_\rho (K)$. So all the edges in a circular neighborhood of $T$ are pointing towards $\Tc$. 

Now assume that $e$ is an edge belonging to a geodesic from a vertex to $T$ that does not point towards $\Tc$. Then, as the last edge of that geodesic is pointing towards $\Tc$, there is necessarily a fork along that geodesic, which means that one of the regions adjacent to $e$ is in $\Omega_\rho (K)$, and this again contradicts the connectedness of $T_\rho (K)$. 
	\end{proof}
	
	We can now infer directly the following property for Bowditch representations. 
	\begin{Proposition}\label{finite}
		Let $c \in \C$, $[\rho]$ in $\mathfrak{X}_{BQ}(F_2 , \SUtwo)$ and let $M = M(c)$ be the constant defined in (\ref{def:Mc}). For any $K\geq M$, the subgraph $T_\rho(K)$ is finite $($in addition to being connected and $\rho$--attracting$).$
	\end{Proposition}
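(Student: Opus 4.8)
The plan is to argue by contradiction, assuming $T_\rho(K)$ is infinite. Proposition~\ref{attracting} tells us $T_\rho(K)$ is connected and $\rho$--attracting, and it is a subgraph of the locally finite graph $\mathcal{E}$; these structural facts, together with the Bowditch hypothesis (used in three forms: (a) every $\rho(\gamma)$ with $\gamma\in\Sc$ is loxodromic, (b) there is no escaping ray for $\rho$, by the Lemma following Definition~\ref{def_bow}, and (c) $\Omega_\rho(M(c))$ is finite), are the only inputs I need.

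The first observation is that every thickened arc $\widetilde{J}_\rho(K,X)$ is finite. Indeed, writing $x=\tr(\rho(X))$, hypothesis (a) and Corollary~\ref{corol_f} give $f(x)>0$, hence $H_c(x)<\infty$ by Lemma~\ref{functionH}; and by the Lemma following Definition~\ref{def_bow} the traces of the neighbours $\{Y_n\}$ of $X$ grow exponentially as $n\to\pm\infty$. Thus only finitely many neighbours satisfy $|\tr(\rho(Y_n))|\le\max\{H_c(x),K\}$, so $J_\rho(K,X)$, and with it $\widetilde{J}_\rho(K,X)$, is a finite subarc of $\partial X$. Two consequences I will reuse: only finitely many neighbours of $X$ lie in $\Omega_\rho(K)$, and the two regions other than $X$ adjacent to any triangle of $\widetilde{J}_\rho(K,X)$ have trace at most $\max\{H_c(x),K\}$.

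Since $T_\rho(K)$ is connected, infinite, and locally finite, K\"onig's lemma yields an infinite simple path in it, which I would straighten, using that $\mathcal{E}$ is a tree of triangles and that $T_\rho(K)$ is a union of full triangles, into a genuine geodesic ray $\mathcal{R}$ in $\mathcal{E}$ whose tail lies in $T_\rho(K)$. If $\mathcal{R}$ is eventually contained in the boundary $\partial X$ of a single region $X$, then by the previous paragraph its triangles eventually belong to $\widetilde{J}_\rho(K,W)$ for $W$ ranging over $X$ and the finitely many neighbours of $X$ in $\Omega_\rho(K)$, a finite union of finite sets, contradicting that $\mathcal{R}$ is infinite. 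Otherwise I would orient $\mathcal{R}$ by $\rho$ and run the dichotomy of Lemma~\ref{lem:escaping}: if $\mathcal{R}$ is eventually an escaping ray this contradicts (b); if not, $\mathcal{R}$ has infinitely many ``backward'' edges, and if it also has infinitely many ``forward'' edges then at each backward--to--forward transition $\mathcal{R}$ has a source vertex which, because consecutive edges of a geodesic in $\mathcal{E}$ lie in adjacent triangles and hence carry distinct colours of the four--colouring, is a fork; the Fork Lemma~\ref{fork} then attaches to it a region of trace at most $M(c)$, these regions are pairwise distinct (as $\mathcal{R}$ is not eventually near a single region), and (c) is contradicted. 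The one remaining case, $\mathcal{R}$ eventually oriented entirely backwards, I would treat by a parallel argument, along the lines of the ``non-region'' case of Lemma~\ref{lem:escaping}.

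I expect the main obstacle to be precisely this last, purely combinatorial, part: promoting an abstract infinite connected subgraph of $\mathcal{E}$ to a geodesic ray lying inside $T_\rho(K)$, controlling the interaction between the four--colouring of $\mathcal{E}$ and the fork conditions along such a ray, and disposing of the ``converging'' ray. The analytic machinery required — the Fork Lemma, Lemmas~\ref{prop:bounded_neighbors} and \ref{functionH}, the absence of escaping rays, and the finiteness of $\Omega_\rho(M(c))$ — is already in place, so the argument should follow the pattern of \cite{bow_mar,tan_gen,mal_ont,MaPa1}.
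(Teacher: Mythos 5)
You have all the pieces but miss the far shorter route that the paper takes. By definition,
$$T_\rho(K)=\bigcup_{X\in\Omega_\rho(K)}\widetilde{J}_\rho(K,X),$$
and you have already established in your first paragraph that each $\widetilde{J}_\rho(K,X)$ is finite: since $\rho$ is Bowditch, every $\rho(\gamma)$ is loxodromic so $f(\tr\rho(X))>0$, hence $H_c(x)<\infty$ by Lemma~\ref{functionH}, and the traces of the neighbours of $X$ grow exponentially, so only finitely many neighbours satisfy $|y_n|\le\max\{H_c(x),K\}$. It only remains to observe that the index set $\Omega_\rho(K)$ is finite --- for $K=M(c)$ this is precisely condition (2) of Definition~\ref{def_bow}, which you list as input (c); for $K>M(c)$ it follows from the lower Fibonacci bound of Proposition~\ref{lower_fib}, whose proof in turn only invokes the present proposition at $K=M(c)$, so there is no circularity --- and then $T_\rho(K)$ is a finite union of finite subgraphs, hence finite. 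That is essentially the paper's entire proof, and it is a direct argument, not a contradiction.

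Your contradiction route via K\"onig's lemma is therefore unnecessary, and it leaves a genuine gap which you yourself flag as ``the main obstacle'': an infinite simple path produced by K\"onig's lemma inside $T_\rho(K)$ is not automatically a geodesic ray of $\mathcal{E}$, and promoting it to one that still lies in $T_\rho(K)$ (so that the escaping-ray dichotomy of Lemma~\ref{lem:escaping} and the Fork Lemma~\ref{fork} can be applied along it) requires a nontrivial combinatorial argument about the ``tree of triangles'' structure of $\mathcal{E}$ that you do not supply. The eventually-backward-oriented case is likewise dismissed by analogy rather than proved. None of this machinery is needed once you use that $T_\rho(K)$ is a union indexed by the finite set $\Omega_\rho(K)$ of the finite arcs you already controlled.
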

	
	\begin{proof}
		Given such a representation $\rho$, we have that the set $\Omega(K)$ is finite, and each $X \in \Omega(K)$ is such that $f(\tr (\rho (X))>0$ and the neighbors around the region $X$ have exponential growth. Hence, the subgraph $\widetilde{J}_\rho (K, X)$ is finite. This proves that the subgraph $T_\rho(K)$ is a finite union of finite subgraphs and so is finite.
	\end{proof}

\section{Fibonacci Growth} \label{sec:fibonacci}	

\subsection{Fibonacci functions}
	
	In this section, we give the definition of the Fibonacci function $F_v : \Omega \rightarrow \N$ that is associated to a vertex $v = (\a , \b) \in \mathcal{E}$, and say what it means for a function $g \co \Omega \rightarrow [0 , \infty )$ to have Fibonacci growth.
	
	The map $F_v$ is constructed by induction on the distance $d_v (X)$ from a region $X \in \Omega$ to $v$ which is the minimal graph distance in $\mathcal{E}$ from $v$ to a vertex $v' = (X,Y)$. We have naturally $d_v (\a) = 0$ and $d_v (\b) = 0$. Given any region $Z \neq \a , \b$, there are precisely two regions $X,Y$ such that $X,Y,Z$ are the three regions around a triangle in $\mathcal{E}$ and $d_v (X)< d_v (Z)$ and $d_v (Y) < d_v (Z)$. So we define: 
$$F_v (Z) = \begin{cases} 1 & \mbox{if } d_v (Z) = 0 \\ 
F_v (X) + F_v (Y) & \mbox{if } d_v (Z) \neq 0 \text{ and } X, Y \text{ are as defined above.} \end{cases}$$

	\begin{Definition} Suppose $g \co\Omega \rightarrow [0, \infty)$ and $\Omega' \subset \Omega$. Let $v \in \mathcal{E}$. We say that:
		\begin{itemize}
			\item $g$ has an \emph{upper Fibonacci bound on} $\Omega'$ if there is some constant $\kappa >0$ such that $g(X) \leq \kappa F_v (X)$ for all $X \in \Omega'$,
			\item $g$ has a \emph{lower Fibonacci bound on} $\Omega'$ if there is some constant $\kappa >0$ such that $g(X) \geq \kappa F_v (X)$ for all $X \in \Omega'$,
			\item $g$ has \emph{Fibonacci growth on} $\Omega'$ if it has both upper and lower Fibonacci bound on $\Omega'$.
		\end{itemize}
	\end{Definition}	
	
	The following result will be our basic tool to prove the Fibonacci growth for the logarithm of the norm of trace functions associated to representations in the Bowditch set. This result also shows that the definition of Fibonacci growth is independent of the vertex $v$, since the definition only depends on the asymptotic growth of the function and not on the local behavior. In fact, if in the definition above we would use a different function $F_w$ defined with respect to a different vertex $w$, then the only difference is that we might have to use different constants.
	
	\begin{Lemma}\label{fib_cor_upper}{\rm (Corollary 2.1.2 in Bowditch \cite{bow_mar})}\label{bound_fibonacci}
	Suppose $g \co\Omega \rightarrow [0, \infty)$ satisfies an inequality of the form $g(Z) \le g(X)+g(Y)+c$ $($respectively, $g(Z) \ge g(X)+g(Y)+c)$ for some fixed constant $c$, whenever the regions $X, Y, Z \in \Omega$ meet at a vertex of the trivalent tree $\mathcal{T}$. Then $g$ has lower $($respectively, upper$)$ Fibonacci growth. 
	\end{Lemma}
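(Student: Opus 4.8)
The plan is to prove this by induction on the distance function $d_v$ to the base vertex $v$, exploiting the fact that the Fibonacci function $F_v$ satisfies the \emph{same} descent recursion $F_v(Z)=F_v(X)+F_v(Y)$ that defines it, together with a preliminary shift that absorbs the additive constant. I would write out the case of the hypothesis $g(Z)\le g(X)+g(Y)+c$, which yields a bound $g\le\kappa F_v$; the case $g(Z)\ge g(X)+g(Y)+c$ follows by reversing every inequality and exchanging $\max$ with $\min$ in the choice of constant.

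First I would replace $g$ by $h:=g+c$ (for $c\le 0$ the hypothesis already holds with $c$ replaced by $0$, so we may assume $c\ge 0$). For any triple $X,Y,Z$ meeting at a vertex of $\mathcal{T}$ this gives $h(Z)=g(Z)+c\le g(X)+g(Y)+2c=h(X)+h(Y)$, so $h\ge 0$ is genuinely subadditive over such triples. Put $\kappa:=\max\{h(\alpha),h(\beta)\}$, which is positive unless $h\equiv 0$ (a case where the statement is vacuous). I claim $h(Z)\le\kappa F_v(Z)$ for all $Z\in\Omega$, by induction on $n=d_v(Z)$. If $n=0$ then $Z\in\{\alpha,\beta\}$, $F_v(Z)=1$, and $h(Z)\le\kappa$ by definition of $\kappa$. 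If $n\ge 1$, the description of $F_v$ recalled just above the lemma furnishes the unique pair $X,Y$ with $d_v(X),d_v(Y)<n$, with $X,Y,Z$ meeting at a vertex of $\mathcal{T}$, and with $F_v(Z)=F_v(X)+F_v(Y)$; subadditivity of $h$ on that triple followed by the inductive hypothesis for $X$ and $Y$ gives $h(Z)\le h(X)+h(Y)\le\kappa F_v(X)+\kappa F_v(Y)=\kappa F_v(Z)$, closing the induction. Since $c\ge 0$, it follows that $g=h-c\le\kappa F_v-c\le\kappa F_v$, the desired bound. The reversed hypothesis, handled by the mirror induction with $\kappa:=\min\{h(\alpha),h(\beta)\}$ after the same shift, produces $g\ge\kappa' F_v$ for a positive $\kappa'$; this requires the base values $g(\alpha),g(\beta)$ to be large enough for the constant to be positive, which is automatic in the setting where the lemma is applied (there $g$ is the logarithm of a trace norm, bounded below away from $0$ off a finite set).

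I do not anticipate a real obstacle: the content is just that $F_v$ is the extremal solution of the descent recursion with unit boundary data, so any sub- or super-solution is pinched between constant multiples of it. The single point requiring care is the constant $c$: plugging the raw inequality $g(Z)\le g(X)+g(Y)+c$ directly into the induction accumulates an extra $+c$ at each level and fails to close, which is exactly why one first passes to $h=g+c$ (or, equivalently, carries the strengthened inductive claim $g(Z)\le\kappa F_v(Z)-c$ and absorbs the surplus using $F_v(Z)\ge 2$ for $d_v(Z)\ge 1$). Finally, the conclusion does not depend on which vertex $v$ is used to build $F_v$: for a second vertex $w$, each of $F_v,F_w$ satisfies the hypotheses of the lemma relative to the other --- they differ only through the finite combinatorics of the segment joining $v$ to $w$ --- so they are comparable up to multiplicative constants, which is what makes ``Fibonacci growth'' a well-defined asymptotic notion.
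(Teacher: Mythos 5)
Your argument is correct and is essentially Bowditch's proof of his Corollary 2.1.2; the paper itself does not prove this Lemma but simply cites \cite{bow_mar}, so there is no in-paper proof to compare against. The shift $h:=g+c$ (equivalently, carrying the strengthened inductive hypothesis $g\le\kappa F_v-c$) is exactly what makes the descent induction on $d_v$ close, and the unique parent pair $(X,Y)$ with $d_v(X),d_v(Y)<d_v(Z)$ supplied by the definition of $F_v$ furnishes the recursion $F_v(Z)=F_v(X)+F_v(Y)$ needed at each inductive step.

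Two remarks. The Lemma's phrasing ``lower (resp.\ upper) Fibonacci growth'' does not match the paper's own Definition, which calls $g\le\kappa F_v$ an \emph{upper} Fibonacci bound and $g\ge\kappa F_v$ a \emph{lower} Fibonacci bound, nor does it match Proposition~\ref{prop:upperFib}, where the subadditive inequality $\phi_\rho^+(Z)\le\phi_\rho^+(X)+\phi_\rho^+(Y)+\log 5$ is used to conclude an \emph{upper} Fibonacci bound; the reading you proved ($\le$ gives $g\le\kappa F_v$ and $\ge$ gives $g\ge\kappa' F_v$) is the one that the applications require. Second, you are right to flag that the superadditive direction only yields a \emph{positive} constant when the base values $g(\alpha),g(\beta)$ exceed $-c$ (after reducing to $c\le 0$). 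This hypothesis is not stated in the Lemma and is genuinely necessary: with $g\equiv 0$ and $c<0$ the hypothesis $g(Z)\ge g(X)+g(Y)+c$ holds trivially, yet no positive lower Fibonacci bound exists. In the paper's application (Proposition~\ref{lower_fib}) the regions involved have trace exceeding $6$, so $\phi_\rho^+\ge\log 6>0$ there and the constant is positive; so the caveat you raise is real but does not affect the paper, and your handling of it is appropriate.
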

	
	Another important property of Fibonacci functions is explained by the following result, which relates $F_v$ to the word length of the elements of $\Sc\subset \Gamma/\!\sim$; see  Section \ref{ss:simple} for the definition of $\Gamma/\!\sim$. In fact, we can write out explicit representatives in $\Gamma$ of elements of $\Gamma/\!\sim$ corresponding to given elements of $\Sc$, as we described there. 
	Let $v = (X, Y; T, T') \in \mathcal{E}^{(0)}$. The regions $X, Y \in \Sc$ are represented, respectively, by a pair of free generators $\alpha$ and $\beta$ for $\Gamma$. As explained in Section \ref{ss:simple}, the regions $T$ and $T'$ are represented by $\a\b$ and $\a\b^{-1}$, respectively, and similarly all other regions in $\Omega$ can be written as words in $\a^{\pm 1}$ and $\beta^{\pm 1}$. Note that all the words arising in this way are cyclically reduced. Let $W(\omega)$ denote the minimal cyclically reduced word length of an element $\omega$ with respect to the generating set $\{\a, \b\}$. From this we deduce the following:

	\begin{Proposition}\label{prop:word}
	Suppose $\{\alpha, \beta\}$ is a set of free generators for $\Gamma$ corresponding to regions $X$ and $Y$. Let $v$ be the vertex $v =(X, Y) \in \mathcal{E}^{(0)}$. If $\omega_Z \in \widehat\Omega$ corresponds to a region $Z\in \Omega$, then $W(\omega_Z) = F_v(Z)$. 
	\end{Proposition}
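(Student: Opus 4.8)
The plan is to prove the equality $W(\omega_Z)=F_v(Z)$ by induction on the distance $d_v(Z)$, exploiting that the recursion defining $F_v$ is the \emph{exact} equality $g(Z)=g(X)+g(Y)$ whenever $X,Y,Z$ meet at a vertex of $\mathcal T$ with $d_v(X),d_v(Y)<d_v(Z)$ --- so a fortiori the hypothesis of Lemma \ref{bound_fibonacci} holds, although here we want the equality itself rather than a two-sided Fibonacci bound. The starting point is the standard fact that, in a free group, every conjugacy class contains a cyclically reduced word, all cyclically reduced words in a fixed conjugacy class have the same length, and that common length is the minimal word length realized in the class. Hence $W(\omega_Z)$ is just the length of any cyclically reduced representative of $\omega_Z$, and it suffices to verify that the explicit representative produced by the labeling of Section \ref{ss:simple} --- with the edge $v$ playing the role of the central edge $e$ and $\{\alpha,\beta\}$ the corresponding generators --- is already cyclically reduced, of length $F_v(Z)$.

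The structural fact driving the argument is that every label $\omega_Z$ is a \emph{positive} word. Removing the edge $v$ from the tree $\mathcal T$ disconnects it into two pieces, and each region other than $X$ and $Y$ lies in the closure of exactly one of them; call these the two \emph{sides} of $v$. I claim, by induction along the construction of the labeling, that the label of a region on one side is a product of positive powers of $\alpha$ and $\beta$ only, while the label of a region on the other side is a product of positive powers of $\alpha$ and $\beta^{-1}$ only. Indeed, the labeling rule assigns to $Z$ the product of the labels of its two parent regions --- the two regions adjacent to $Z$ that are closer to $v$ --- and never introduces an inversion; the sole sign choice is the prescribed replacement $\beta\leftrightarrow\beta^{-1}$, which is constant on each side. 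Now a word all of whose letters lie in a two-element set $\{g,h\}$ with $\{g,h\}\cap\{g^{-1},h^{-1}\}=\emptyset$ is automatically reduced and cyclically reduced, since neither an internal pair nor the first/last pair of letters can be mutually inverse. Consequently $W(\omega_Z)=|\omega_Z|$, the ordinary word length of the label.

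It then remains to show $|\omega_Z|=F_v(Z)$, again by induction on $d_v(Z)$. If $d_v(Z)=0$ then $Z\in\{X,Y\}$, its label is $\alpha$ or $\beta$, and both sides equal $1$. If $d_v(Z)=n\ge 1$, let $X',Y'$ be the two regions that form a triangle with $Z$ and satisfy $d_v(X'),d_v(Y')<n$; these are precisely the two parent regions used in the labeling, so $\omega_Z=\omega_{X'}\omega_{Y'}$ up to the order of the factors. Since $X'$ and $Y'$ lie on the same side of $v$ as $Z$, the words $\omega_{X'}$ and $\omega_{Y'}$ are positive in the same two-element alphabet, so no cancellation occurs in the concatenation and $|\omega_Z|=|\omega_{X'}|+|\omega_{Y'}|$. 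Combining this with the inductive hypothesis and the previous paragraph gives $W(\omega_Z)=|\omega_Z|=|\omega_{X'}|+|\omega_{Y'}|=F_v(X')+F_v(Y')=F_v(Z)$, which closes the induction.

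The one step that needs genuine care --- and the only place where the combinatorics of Section \ref{ss:simple} has to be unwound in detail --- is the claim that the two parents of $Z$ lie on the same side of $v$ as $Z$ and carry side-consistent labels, so that $\omega_{X'}\omega_{Y'}$ really is a positive word in the appropriate alphabet; this amounts to tracking the Stern--Brocot-type tree structure underlying the labeling, and is otherwise pure bookkeeping. A minor additional point --- presumably already recorded when the bijection $\Sc\cong\Omega$ was set up --- is that the word $\omega_Z$ output by the labeling indeed represents the unoriented conjugacy class of the simple closed curve $Z$; granting that, the chain of equalities above proves the Proposition.
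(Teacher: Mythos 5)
Your proof is correct and takes essentially the same route as the paper, which gives no formal proof of this Proposition but, in the paragraph immediately preceding it, observes that the labels produced by the scheme of Section~\ref{ss:simple} are cyclically reduced words and leaves the rest implicit. Your write-up supplies exactly the missing details---the positivity/sidedness observation that forces the labels to be reduced and cyclically reduced, and the induction on $d_v(Z)$ matching the label-length recursion with the defining recursion for $F_v$.
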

	
\subsection{Upper and Lower Fibonacci growth}
		
For any $[\rho] \in \mathfrak{X}$, we denote by $\phi_\rho \co\Omega \rightarrow \C$ the function $\phi_\rho (X) := \log | \tr (\rho (X)) |$. We consider the function $\phi_\rho^+ := \max \{ \phi_\rho , 0 \}$. The purpose of this section is to prove that, if the representation $[\rho]$ is in the Bowditch set $\mathfrak{X}_{BQ}$, then the map $\phi_\rho^+$ has Fibonacci growth. 

First we show that any representation in $\mathfrak{X}(F_2 , \SUtwo)$ has upper Fibonacci bound. We will use Lemma \ref{bound}. 
	\begin{Proposition}\label{prop:upperFib}
		For any $[\rho]$ in $\mathfrak{X}(F_2 , \SUtwo)$, the function $\phi_\rho^+$ has an upper Fibonacci bound on $\Omega$. 
	\end{Proposition}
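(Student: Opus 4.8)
The plan is to reduce the statement to the subadditivity criterion of Bowditch, Lemma~\ref{bound_fibonacci}: to conclude that $\phi_\rho^+$ has an upper Fibonacci bound on $\Omega$ it suffices to exhibit a constant $\kappa_0 = \kappa_0(\rho)$ with
$$\phi_\rho^+(Z) \le \phi_\rho^+(X) + \phi_\rho^+(Y) + \kappa_0$$
whenever the regions $X,Y,Z \in \Omega$ meet at a vertex of the trivalent tree $\mathcal{T}$. Since any $[\rho] \in \mathfrak{X}(F_2,\SUtwo)$ lies in $\mathfrak{X}_c$ for $c = \mathfrak{b}([\rho])$, the number $P = 2\Re(c)$ is a fixed real constant attached to $\rho$, and the trace identities of Section~\ref{sec:bow} are at our disposal.

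First I would fix such a triple and, as in Section~\ref{ss:simple}, choose free generators $\alpha,\beta$ of $F_2$ with $\alpha$ representing $X$, $\beta$ representing $Y$, and $\alpha\beta$ representing $Z$; let $T$ be the fourth region meeting the edge $(X,Y) \in \mathcal{T}^{(1)}$, represented by $\alpha\beta^{-1}$, and write $x,y,z,t$ for the corresponding traces, so that the quadruple satisfies
$$P = |x|^2|y|^2 + |x|^2 + |y|^2 + |z|^2 + |t|^2 - 2\Re(xy\overline z) - 2\Re(\overline x y t) - 3.$$
The heart of the argument is a uniform bound $|z| \le (4+C_1)\max\{1,|x|\}\max\{1,|y|\}$, where $C_1 := \sqrt{\max\{P+3,\,0\}}$. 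To obtain it, set $m := \max\{|z|,|t|\}$; since the terms $-|x|^2|y|^2 - |x|^2 - |y|^2$ are nonpositive and $|2\Re(xy\overline z)| \le 2|x||y||z|$, $|2\Re(\overline x y t)| \le 2|x||y||t|$, the $P$-equation gives $m^2 \le |z|^2 + |t|^2 \le (P+3) + 4|x||y|\,m$, hence $m \le 4|x||y| + C_1$, and therefore $|z| \le m \le 4|x||y| + C_1 \le (4+C_1)\max\{1,|x|\}\max\{1,|y|\}$. In the regime $|x|,|y| \ge \max\{2\sqrt[4]{3},\,\sqrt{|\Re(c)|}\}$ one could instead invoke Lemma~\ref{bound} to get the sharper $|z| < 5|x||y|$, but the estimate above already covers every vertex simultaneously.

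Taking logarithms and using $\log\max\{1,|x|\} = \phi_\rho^+(X)$ (and likewise for $Y$) yields $\log|z| \le \log(4+C_1) + \phi_\rho^+(X) + \phi_\rho^+(Y)$, so that $\phi_\rho^+(Z) = \max\{\log|z|,\,0\} \le \kappa_0 + \phi_\rho^+(X) + \phi_\rho^+(Y)$ with $\kappa_0 := \max\{\log(4+C_1),\,0\}$. Applying Lemma~\ref{bound_fibonacci} then gives the desired upper Fibonacci bound. The one point that needs care is uniformity and symmetry of the inequality over the tree: the trace $z$ of the region we wish to control need not be the larger of $|z|$ and $|t|$ at the edge $(X,Y)$, and it is precisely in order to make the estimate insensitive to which of the two regions sits at the vertex under consideration that one bounds $m = \max\{|z|,|t|\}$ rather than $|z|$ itself. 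Beyond this, the proof is a direct manipulation of the $P$-equation, so I do not expect a genuine obstacle.
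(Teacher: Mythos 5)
Your proof is correct and follows the same overall strategy as the paper: extract a multiplicative bound of the form $|z| \lesssim |x||y|$ from the $P$-equation, take logarithms, and feed the resulting subadditive inequality into Bowditch's criterion (Lemma~\ref{bound_fibonacci}). Where you differ from the paper is in how the multiplicative bound is produced. The paper's proof simply cites Lemma~\ref{bound} to get $|z| \leq 5|x||y|$; but Lemma~\ref{bound} carries the hypotheses $|x|,|y| \geq C$ and $|t|\geq |z|$, and the paper's proof does not address the vertices at which these fail (in particular when $|x|$ or $|y|$ is small). Your direct manipulation of the $P$-equation --- bounding $m=\max\{|z|,|t|\}$ by $m^2 \leq (P+3)+4|x||y|m$ and then solving the quadratic to get $|z|\leq m\leq (4+C_1)\max\{1,|x|\}\max\{1,|y|\}$ --- holds unconditionally for every quadruple coming from $\X_c$, at the modest cost of a weaker and $c$-dependent multiplicative constant. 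This is slightly more elementary, avoids having to separate cases or worry about the orientation of the edge $(Z,T)$, and closes the small-$|x|$, small-$|y|$ gap in the paper's argument. Both versions conclude by taking $\log$ and invoking Lemma~\ref{bound_fibonacci} identically, so the end result is the same; yours is the more complete write-up.
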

	
	\begin{proof}
		Let $v$ be an arbitrary vertex in $\mathcal{E}^{(0)}$. For all regions $Z \in \Omega$ not adjacent to $v$, let $X, Y$ be the regions adjacent to $Z$ such that $d_u (X) < d_u (Z)$ and $d_u (Y) < d_u (Z)$. Applying Lemma \ref{bound} to the vertex $u = (X,Y; Z, T) \in \mathcal{E}^{(0)}$, we obtain that $|z| \leq 5 |x| |y|$ and hence $\phi_\rho^+ (Z) \leq \phi_\rho^+ (X) + \phi_\rho^+ (Y) + \log (5)$. Using Lemma \ref{bound_fibonacci}, we can see that $\phi_\rho^+$ has an upper Fibonacci bound.	
	\end{proof}
		
	The lower Fibonacci bound is not satisfied for a general representation in $\mathfrak{X}(F_2 , \SUtwo)$, but it will be true for representations in the Bowditch set. Recall that a vertex $v \in \mathcal{E}^{(0)}$ corresponds to an edge of the trivalent tree $\mathcal{T}$. Hence if we remove such a vertex from $\mathcal{E}$ we split the graph into two connected components. We will refer to properties satisfied on one such component as occurring ``on one side.'' To prove this, we need the following lemma which shows that if a vertex $v$ is ``attracting on one side'' (meaning that all the geodesics from vertices on one component of $\mathcal{E} \setminus \{v\}$ are pointing towards $v$), then the Fibonacci bound is satisfied for the regions adjacent to that component. 

\begin{Lemma}\label{lem:lowerFib1}
	Let $[\rho] \in \mathfrak{X}(F_2 , \SUtwo)$. Assume that $v$ is $\rho$--attracting on one side, and all the regions adjacent to that component of $\mathcal{E} \setminus \{v\}$ have values greater than $C$. Then there exists $s>0$ such that for all vertices $p \neq v$ in that component, if $X,T,Z'$ are the three regions around $p$ such that $d(Z', v) > \max \{d(v,T) , d(v,X)\}$, then we have:
	$$\log |\phi_\rho^+(Z')| > \log |\phi_\rho^+(X)| + \log |\phi_\rho^+(T)| - s.$$ 
\end{Lemma}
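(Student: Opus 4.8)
The plan is to reduce the assertion to a multiplicative lower bound on the trace of $Z'$ and to obtain that bound from the attracting hypothesis together with the boundary relation $P=2\Re(c)$ and the growth estimates of Section~\ref{sec:fork_lemma}. \emph{Setup.} Let $\mathcal{E}_v^+$ be the component of $\mathcal{E}\setminus\{v\}$ on which $v$ is $\rho$--attracting, and fix a vertex $p\neq v$ in $\mathcal{E}_v^+$. Write $p=(X,T)$ as an edge of $\mathcal{T}$; let $Z'$ be the region across $p$ with $d(Z',v)>\max\{d(X,v),d(T,v)\}$ and let $D$ be the fourth region incident to $p$. Choose free generators $\alpha\in X$, $\beta\in T$ with $\alpha\beta\in Z'$ and $\alpha\beta^{-1}\in D$, and set $x=\tr\rho(\alpha)$, $y=\tr\rho(\beta)$, $z'=\tr\rho(\alpha\beta)$, $t=\tr\rho(\alpha\beta^{-1})$. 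All four regions are incident to $p\in\mathcal{E}_v^+$, so $|x|,|y|,|z'|,|t|>C$; taking $C=C(c)$ large, each of these traces then lies outside the deltoid, so the four elements are loxodromic by Corollary~\ref{corol_f}, and $\phi_\rho^+=\phi_\rho$ on these regions. As in the proof of Lemma~\ref{prop:bounded_neighbors}, $(x,y,z',t)$ is a system of trace coordinates for $\rho$, hence $P=2\Re(c)$.

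\emph{Reduction.} I would first reduce to proving $|z'|\ge\delta\,|x||y|$ for some $\delta=\delta(c)>0$, from which the stated reverse triangle inequality for $\phi_\rho^+$ follows with $s=\log(1/\delta)$. Completing the square in $P=2\Re(c)$ gives
\begin{equation}\label{eq:lowFib-square}
|z'-xy|^2+|t-x\bar y|^2 \;=\; 2\Re(c)+3+|x|^2|y|^2-|x|^2-|y|^2 ,
\end{equation}
whose right--hand side, for $C$ large in terms of $c$, satisfies $\tfrac12|x|^2|y|^2\le\text{RHS}\le(1+\varepsilon)|x|^2|y|^2$ with $\varepsilon$ as small as desired. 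Since $|z'-xy|^2=\bigl(\text{RHS}\bigr)-|t-x\bar y|^2$ and $|z'|\ge|xy|-|z'-xy|$, the bound $|z'|\ge\delta|x||y|$ will follow \emph{provided} $t$ keeps a definite distance from $x\bar y$, i.e.\ $|t-x\bar y|\ge\theta_0\,|x||y|$ for a fixed $\theta_0=\theta_0(c)>0$. So the whole content of the Lemma is to establish this lower bound for $|t-x\bar y|$. (In $\SL(2,\C)$ the analogous step is immediate, because the Markov identity $z'+t=xy$ collapses \eqref{eq:lowFib-square} to a single affine constraint; in $\SU(2,1)$ one genuinely has to use more than the boundary relation.)

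\emph{Exploiting attraction.} To get $|t-x\bar y|\ge\theta_0|x||y|$ I would argue by contradiction, assuming $t\approx x\bar y$, and use the neighbour--trace identities around $p$,
$$\tr\rho(\alpha^2\beta)=xz'-\bar x\,y+\bar t,\qquad \tr\rho(\alpha\beta^2)=yz'-x\bar y+t ,$$
together with the attracting hypothesis. The two edges of $\mathcal{E}$ joining $p$ to $(X,Z')$ and to $(T,Z')$ lie on geodesics from deeper vertices to $v$, so by $\rho$--attraction on $\mathcal{E}_v^+$ they point back toward $p$, which by the definition of the $\rho$--orientation forces $|\tr\rho(\alpha^2\beta)|>|t|$ and $|\tr\rho(\alpha\beta^2)|>|t|$. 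With $t\approx x\bar y$ (hence $\bar t\approx\bar x y$) these read $|x||z'|\gtrsim|t|\approx|x||y|$ and $|y||z'|\gtrsim|t|\approx|x||y|$, i.e.\ $|z'|\gtrsim\max\{|x|,|y|\}$. One then propagates this along the two fans issuing from the loxodromic region $Z'$: since every neighbour of $Z'$ has modulus $>C\ge D(c)$, Lemma~\ref{prop:bounded_neighbors} forces the trace sequence around $Z'$ to grow exponentially in both directions, and substituting this growth into the coordinate quadruples $(z',u_{n+1},u_{n+2},\overline{u_n})$ attached to $\partial Z'$ (as in Lemma~\ref{functionH}), together with the fact that $P$ has the fixed value $2\Re(c)$, produces a contradiction after the standard bookkeeping. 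This yields $|t-x\bar y|\ge\theta_0|x||y|$, hence $|z'|\ge\delta|x||y|$ and, taking logarithms, $\phi_\rho(Z')\ge\phi_\rho(X)+\phi_\rho(T)-s$ with $s=s(c)$ — exactly the reverse triangle inequality feeding Lemma~\ref{bound_fibonacci}.

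\emph{Main obstacle.} The delicate part is the propagation step: converting the orientation data at $p$ and along $\partial Z'$ (and, if necessary, along the fans near $v$ controlling $D$) into a \emph{uniform} lower bound $|t-x\bar y|\ge\theta_0|x||y|$, valid for every $p$ in $\mathcal{E}_v^+$ and in the borderline case where $p$ is adjacent to $v$. This is the $\SU(2,1)$ counterpart of the estimates carried out in \cite{tan_gen,mal_ont,MaPa1}; as there, explicit values of $\theta_0$, and hence of $s$, can if desired be obtained numerically, in the spirit of the Remark following Lemma~\ref{prop:bounded_neighbors}.
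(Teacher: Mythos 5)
Your proof takes a genuinely different route from the paper, and it leaves the crucial step unresolved.

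The paper's argument is short and works directly with the flip relation $z' = z + \overline{xt} - xy$, where $Z$ is the region one step \emph{closer} to $v$. The attracting orientation gives $|z'| > |z|$ immediately, so $2|z'| > |z'| + |z| \geq |\overline{xt} - xy| - 0 \geq |x|\bigl(|t| - |y|\bigr)$; Lemma~\ref{bound} gives $|t| > \tfrac{C}{4}\max\{|x|,|y|\}$, hence $|t| - |y| > \tfrac{C-4}{C}|t|$, and the multiplicative bound $|z'| \geq \tfrac{C-4}{2C}|x||t|$ drops out at once. The only remaining issue (the hypothesis $|t|>|z|$ needed for Lemma~\ref{bound}) is disposed of by a one-step induction starting at the vertices adjacent to $v$, after which the condition propagates. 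No completion of the square, and no auxiliary estimate on $|t - x\bar y|$, are ever needed.

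Your approach instead completes the square in the $P$-equation to obtain $|z'-xy|^2+|t-x\bar y|^2 = 2\Re(c)+3+|x|^2|y|^2-|x|^2-|y|^2$, and then proposes to deduce $|z'|\ge\delta|x||y|$ from a putative lower bound $|t-x\bar y|\ge\theta_0|x||y|$. Two problems. First, this is \emph{not} an equivalent reformulation of the Lemma, contrary to your assertion that ``the whole content of the Lemma is to establish this lower bound'': the Lemma only controls $|z'|$, and a priori one could have $z'$ close to $2xy$ (so $|z'|\gtrsim|xy|$ holds) while $t$ is close to $x\bar y$. You have reduced the Lemma to a strictly stronger claim, which the paper never proves and which may fail. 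Second, and more seriously, your sketch for that stronger claim is not carried through. The orientation constraints $|\tr\rho(\alpha^2\beta)|>|t|$ and $|\tr\rho(\alpha\beta^2)|>|t|$ only yield the additive bound $|z'|\gtrsim\max\{|x|,|y|\}$, which is far weaker than the multiplicative estimate you ultimately need, and the phrase ``propagate along the fans around $Z'$ $\ldots$ after the standard bookkeeping'' does not explain how the exponential growth from Lemma~\ref{prop:bounded_neighbors} produces a contradiction with the fixed value of $P$. The quantitative input that closes the gap in the paper is precisely Lemma~\ref{bound} applied to the flip relation; once you invoke that, the completing-the-square machinery and the detour through $|t-x\bar y|$ become unnecessary.
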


\begin{proof}
The notation is chosen so that the arrow points away from the region $Z'$ and towards the region $Z$. We have the relation $z' = z+ \overline{xt} - x y$. The orientation of the arrow gives that $|z'| > |z|$. So we have: 
	$$|z'| = |z + \overline{xt} - xy| \geq |x| (|t| - |y|) - |z|.$$
	
	Now assume that $|t| > |z|$. This allows us to directly use Lemma \ref{bound}, so that 
	$$2 |z'| > |z'| + |z| > |x| \left( \frac{C}{4} \max\{|x|,|y|\} - |y| \right)  \geq |x| |t| \left( \frac{C-4}{2C} \right).$$ 
This gives  $\log |z'| > \log |x| + \log |t| - \log \left( \frac{C-4}{2C} \right)$.
	
	Note that here we have $|z'|> |y|$, and hence, by induction, if for a given vertex we have $|t|>|z|$, then for all subsequent vertices we also have that inequality.
	
	Finally, we prove that all vertices, except possibly $v$, satisfy that inequality. If $v$ is such that $|t|<|z|$, then the previous lemma states that $|z| > \frac{C}{4} \max\{|x| , |y| \}$. The two vertices at distance $1$ from $v$, which corresponds to $(x,t,z')$ and $(y,t,z''),$ satisfy $|z'|> |y|$ and $|z''| > |x|$, which is the required inequality to apply the previous reasoning to these two vertices.
\end{proof}

We are now going to use Lemma \ref{lem:lowerFib1} to prove the lower Fibonacci bound of representations in $\mathfrak{X}_{BQ}$.

	\begin{Proposition} \label{lower_fib}
		Let $[\rho] \in \mathfrak{X}_{BQ}$. Then the function $\phi_\rho^+$ has lower Fibonacci bound on $\Omega$.
	\end{Proposition}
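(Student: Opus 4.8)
The plan is to reduce, by the combinatorial inequality of Bowditch recorded in Lemma~\ref{bound_fibonacci}, to establishing a single additive recursion bound: there is a constant $s=s(\rho)>0$ such that $\phi_\rho^+(Z)\ge\phi_\rho^+(X)+\phi_\rho^+(Y)-s$ whenever the regions $X,Y,Z\in\Omega$ meet at a vertex of $\mathcal T$ with $Z$ the (unique) region farthest from the chosen base vertex. Once this holds for all such triples, Lemma~\ref{bound_fibonacci} immediately yields the desired lower Fibonacci bound for $\phi_\rho^+$ on $\Omega$.

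Before splitting into cases I would record two facts. First, since $[\rho]\in\mathfrak{X}_{BQ}$ every $\rho(\gamma)$, $\gamma\in\Sc$, is loxodromic, so $f(\tr\rho(\gamma))>0$; because $|t|\le 1$ forces $f(t)=|t|^4-8\Re(t^3)+18|t|^2-27\le 0$, this gives $|\tr\rho(\gamma)|>1$, hence $\phi_\rho^+(Z)>0$ for every $Z\in\Omega$, and together with the finiteness of $\Omega(M)$ (with $M=M(c)$) this shows $\delta_0:=\inf_{Z\in\Omega}\phi_\rho^+(Z)$ is a strictly positive minimum. Second, by Propositions~\ref{attracting} and~\ref{finite} the subgraph $T:=T_\rho(M)$ is finite, connected and $\rho$--attracting, and since it is a union of whole triangles, $\mathcal E\setminus T$ has finitely many connected components $E_1,\dots,E_m$, each separated from $T$ at a single vertex $v_j$ and, because $T$ is $\rho$--attracting, attracting toward $v_j$ on the side of $E_j$. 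Moreover any region $V$ with $|\tr\rho(V)|\le M$ lies in $\Omega(M)$, so its arc $\widetilde{J}_\rho(M,V)$ sits inside $T$ and in particular $V$ touches $T$; hence, away from a fixed finite neighbourhood of $T$, every region meeting a branch $E_j$ has trace of modulus $>M\ge C:=\max\{6,\sqrt{|\Re c|}\}$.

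I would then verify the recursion bound in three cases. (1) For triples $\{X,Y,Z\}$ contained in the part of a branch $E_j$ all of whose regions have modulus $>C$: there $E_j$ is attracting toward its local root with all values $>C$, so Lemma~\ref{lem:lowerFib1} applies and gives $\phi_\rho^+(Z)\ge\phi_\rho^+(X)+\phi_\rho^+(Y)-s_0$ with $s_0=\log\frac{2C}{C-4}$ depending only on $c$. (2) For the finitely many triples lying in a bounded neighbourhood of $T$: only finitely many trace values occur, so the bound holds after enlarging the constant. (3) For the bi-infinite ``tails'' of the boundary $\partial W$ of the finitely many small regions $W$ (those with $|\tr\rho(W)|\le M$), where the triple has the form $\{W,Y_n,Y_{n+1}\}$ with $W$ a parent: here I would use that, $\rho$ being Bowditch, the neighbour sequence $(\tr\rho(Y_n))_n$ around $W$ grows exponentially in both directions (the lemma following Definition~\ref{def_bow}), so $\phi_\rho^+(Y_{n+1})-\phi_\rho^+(Y_n)\to\log r(W)>0$ with $r(W)>1$; since there are finitely many such $W$, with $r(W)$ bounded away from $1$ and $|\tr\rho(W)|\le M$, the inequality $\phi_\rho^+(Y_{n+1})\ge\phi_\rho^+(W)+\phi_\rho^+(Y_n)-s_1$ holds along every tail for a suitable $s_1=s_1(\rho)$. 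Setting $s=\max\{s_0,s_1,\dots\}$ gives the global recursion bound, and Lemma~\ref{bound_fibonacci} then finishes the proof.

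The hard part will be the bookkeeping at the interface between the finite attracting ``core'' $T$ and the infinite branches: one must check that the trace lower bounds demanded by Lemma~\ref{lem:lowerFib1} (and hence by Lemma~\ref{bound}) really do hold on a cofinite sub-branch of each $E_j$, and one must control the contribution to the recursion of the finitely many small-trace regions, whose boundary tails are themselves infinite and reach arbitrarily far into the branches. This is exactly where both defining properties of Bowditch representations are used: loxodromicity, which forces $f>0$ and $\phi_\rho^+>0$ everywhere, and the finiteness of $\Omega(M)$, which makes the exceptional set finite and underlies the exponential growth of neighbours around each region.
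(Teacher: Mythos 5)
Your proof takes essentially the same route as the paper's: reduce to a single-step recursion via Lemma~\ref{bound_fibonacci}, use the finite, connected, $\rho$--attracting subgraph $T_\rho(M)$, and apply Lemma~\ref{lem:lowerFib1} on the branches of $\mathcal{E}\setminus T_\rho(M)$. The paper's published proof is a single paragraph that asserts ``by construction, all the vertices on the boundary of $T_\rho(M)$ satisfy the hypothesis of Lemma~\ref{lem:lowerFib1}''; but that hypothesis requires \emph{every} region adjacent to the far component of $\mathcal{E}\setminus\{v\}$ to have modulus $>C$, and at a boundary vertex $v$ sitting at an end of an arc $\widetilde{J}_\rho(M,W)$ the small region $W$ itself (possibly with $|\tr\rho(W)|\le C$) is still adjacent to the far component, so the hypothesis is not literally met along the infinite tails of $\partial W$. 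Your case (3) is precisely the missing ingredient: you handle the triples $\{W,Y_n,Y_{n+1}\}$ on those tails separately, using the exponential growth of the neighbor sequence around each small region. So your argument is the same in outline but fills a gap that the paper leaves implicit, and your cases (1) and (2) recover the paper's use of Lemma~\ref{lem:lowerFib1} on the sub-branches that stay away from $\partial W$ and on the finite core. One remark: case (3) can be made lighter. Since $[\rho]\in\X_{BQ}$, the sequence $|\tr\rho(Y_n)|$ is eventually strictly increasing as $n$ moves away from $\widetilde{J}_\rho(M,W)$, so for all but finitely many $n$ one already has $\phi_\rho^+(Y_{n+1})>\phi_\rho^+(Y_n)\ge\phi_\rho^+(W)+\phi_\rho^+(Y_n)-\log M$ (using $\phi_\rho^+(W)\le\log M$), and the finitely many remaining $n$ fold into your case (2); the limit $\log r(W)$ and the uniform lower bound on $r(W)$ over the finitely many small $W$ are correct but not needed.
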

	
	\begin{proof}
		Let $[\rho] \in \X_{BQ, c}$. We denote $M = M(c)$ and we consider the subgraph $T_\rho (M)$, which is finite, connected, and $\rho$--attracting by Propositions \ref{attracting} and \ref{finite}. By construction, all the vertices on the boundary of $T_\rho (M)$ satisfy the hypothesis of Lemma \ref{lem:lowerFib1} and hence the map $\phi_\rho^+$ has lower Fibonacci growth on each connected component of $\mathcal{E} \setminus T_\rho (M)$. As there are only finitely many such connected components (one for each vertex on the boundary of $T_\rho(M)$), we can then obtain the lower Fibonacci bound on all of $\Omega$.
\end{proof}

\subsection{Characterizations of Bowditch Set} \label{sec:bowditch-characterization}

We can now state different characterizations for the representations in the Bowditch set. Our first characterization is a slight reformulation of the definition. 

\begin{Proposition}
Let $[\rho] \in \mathfrak{X}_c(F_2 , \SUtwo)$. Then $[\rho] \in \X_{BQ}$ if and only if the following two conditions are satisfied:
		\begin{enumerate}
			\item[$(1)$] for all $\gamma \in \Sc$, we have $f(\tr (\gamma)) > 0$,
			\item[$(2)$] for all $K \geq 0$, the set $\Omega_\rho (K)$ is finite.
		\end{enumerate}
\end{Proposition}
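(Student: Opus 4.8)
The plan is to unwind both directions of the equivalence by comparing the conditions in the proposition with those in Definition \ref{def_bow}. Recall that Definition \ref{def_bow} says $[\rho]\in\X_{BQ,c}$ iff (a) every $\rho(\gamma)$ is loxodromic for $\gamma\in\Sc$, and (b) the set $\Omega(M(c))$ is finite. So the entire content of this proposition is that condition (1) here is equivalent to ``$\rho(\gamma)$ loxodromic for all $\gamma$,'' and that, \emph{given} (1), finiteness of $\Omega_\rho(M(c))$ is equivalent to finiteness of $\Omega_\rho(K)$ for \emph{every} $K\ge 0$.

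For the first equivalence I would invoke Corollary \ref{corol_f}: for $A\in\SUtwo$ with $t=\tr(A)$ one has $A$ loxodromic $\iff f(t)>0$. Applying this with $A=\rho(\gamma)$ and $t=\tr(\rho(\gamma))$ gives immediately that ``$\rho(\gamma)$ loxodromic for all $\gamma\in\Sc$'' is the same as condition (1). (One must note that $\tr(\rho(\gamma))$ is well defined up to conjugacy and inversion, and that $f$ is invariant under $t\mapsto\overline t$ since $f(t)=|t|^4-8\Re(t^3)+18|t|^2-27$; hence $f(\tr\rho(\gamma))$ is well defined on $\Sc$.)

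The substantive direction is deriving (2) for all $K$ from the definition. Suppose $[\rho]\in\X_{BQ,c}$, so (1) holds and $\Omega_\rho(M(c))$ is finite. I would argue as follows. By Proposition \ref{finite}, for any $K\ge M(c)$ the attracting subgraph $T_\rho(K)$ is finite, connected and $\rho$-attracting. Any region $X\in\Omega_\rho(K)$ contributes a (nonempty) subarc $\widetilde J_\rho(K,X)$ to $T_\rho(K)$, and distinct regions contribute to distinct parts of the graph; since $T_\rho(K)$ is finite it meets only finitely many regions, so $\Omega_\rho(K)$ is finite. For $K< M(c)$ we have $\Omega_\rho(K)\subset\Omega_\rho(M(c))$ which is finite by hypothesis. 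Hence $\Omega_\rho(K)$ is finite for all $K\ge 0$. Conversely, if (1) and (2) hold, then in particular $\Omega_\rho(M(c))$ is finite, which with (1)$\iff$(loxodromic) gives exactly Definition \ref{def_bow}, so $[\rho]\in\X_{BQ}$.

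The main obstacle is making precise the claim that finiteness of $T_\rho(K)$ forces $\Omega_\rho(K)$ to be finite: one needs the observation that each $X\in\Omega_\rho(K)$ contributes at least one edge to $\widetilde J_\rho(K,X)\subset T_\rho(K)$ lying in $\partial X$, and that an edge of $\mathcal E$ lies in the boundary of only finitely many (in fact two or three) regions, so finitely many edges can ``see'' only finitely many regions of $\Omega_\rho(K)$. Everything else is a direct citation of Corollary \ref{corol_f}, Proposition \ref{attracting}, and Proposition \ref{finite}, together with the elementary monotonicity $\Omega_\rho(K)\subset\Omega_\rho(K')$ for $K\le K'$.
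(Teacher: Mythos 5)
Your reformulation of condition (1) via Corollary \ref{corol_f}, including the remark that $f$ is invariant under $t\mapsto\overline t$, and the ``if'' direction are both correct and match the paper. The ``only if'' direction for condition (2), however, takes a genuinely different route from the paper, and that route contains a circularity that you should be aware of.

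The paper's own argument invokes Proposition \ref{lower_fib}: the Bowditch hypothesis gives the lower Fibonacci bound $\phi_\rho^+(X)\geq\kappa F_v(X)$, and since only finitely many regions $X$ satisfy $F_v(X)\leq K/\kappa$, the set $\Omega_\rho(K)$ is finite for every $K$. You instead cite Proposition \ref{finite} for an arbitrary $K\geq M(c)$ and then deduce finiteness of $\Omega_\rho(K)$ from finiteness of $T_\rho(K)$ via the pairwise-disjoint arcs $J_\rho(K,X)\subset\partial X$. That last deduction is fine, but the citation is the problem: the proof of Proposition \ref{finite} opens with the assertion that $\Omega(K)$ is finite, which for $K>M(c)$ is precisely what this Proposition is meant to establish (Definition \ref{def_bow} only postulates finiteness of $\Omega(M(c))$). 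So unfolding your citation produces a circular argument. The correct logical order in the paper is: Proposition \ref{finite} is only needed, and only justified directly, for $K=M(c)$; that feeds Proposition \ref{lower_fib}; the lower Fibonacci bound then yields finiteness of $\Omega_\rho(K)$ for all $K$; and only afterwards is Proposition \ref{finite} for general $K$ on solid ground. You should replace the appeal to Proposition \ref{finite} (for general $K$) with an appeal to the lower Fibonacci bound of Proposition \ref{lower_fib}, or restrict yourself to $K=M(c)$ and then run your arc-counting argument inside a direct Fibonacci-growth estimate.
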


\begin{proof}
	For the first condition, Corollary \ref{corol_f} implies that an element $A \in \mathrm{SU} (2, 1)$ is loxodromic if and only if $f(\tr(A)) >0$.
	
	For the second condition, we can see that if $[\rho] \in \mathcal{X}_{BQ, c}$, then $\Omega_\rho(M(c))$ is finite  and hence $\phi_\rho^+$ has lower Fibonacci bound. From this it follows that for all $K \geq M(c)$ we have that the set $\Omega_\rho (K)$ is finite, as we wanted. In addition when $0 \leq K < M(c)$, then we have the inclusion $\Omega_\rho(K) \subset \Omega_\rho(M(c))$. 
\end{proof}

For the next characterization, we use the fact that $\mathrm{SU}(2 , 1)$ can be identified with the group of orientation-preserving isometries of the complex hyperbolic plane, and relate that to the following notion of well-displacing representations. This will be the link with the notion of Bowditch representation developed by Schlich \cite{schlich}.

Recall that for every element $\varphi\in\mathrm{Isom}(\mathbf{H}_\C^2)$ one can consider the displacement $l(\varphi)$, which is defined by: 
$$l(\varphi) := \inf \{ d_{ \mathbf{H}_\C^2}(x , \phi (x)) \mid x \in \mathbf{H}_\C^2 \}.$$
Recall also that for any element $\gamma \in F_2$, we denote by $W(\gamma)$ the minimal cyclically reduced word length with respect to some generating set. In these terms, we now prove the following alternative characterization.

\begin{Proposition}
	A representation $[\rho]$ belongs to $\X_{BQ}$ if and only if there exists $k, m >0$ such that $|l (\rho (\gamma))| \geq k W(\gamma) - m $ for all $\gamma \in \Sc$.
\end{Proposition}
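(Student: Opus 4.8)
The plan is to connect the displacement $l(\rho(\gamma))$ of a loxodromic isometry to the logarithm of the absolute value of its trace, and then to invoke the Fibonacci growth results (Propositions \ref{prop:upperFib} and \ref{lower_fib}) together with Proposition \ref{prop:word} identifying $F_v$ with cyclically reduced word length $W$. More precisely, for a loxodromic $A\in\SUtwo$ with eigenvalues $\lambda$ (of modulus $>1$), $\mu$ (on the unit circle), and $1/(\lambda\bar\mu)$ or similar (of modulus $<1$), the translation length along its axis is $l(A) = 2\log|\lambda|$ (up to a fixed normalizing constant depending on the choice of metric on $\mathbf{H}^2_\C$). So I would first establish a two-sided comparison: there exist universal constants $a,b,a',b'>0$ such that for any loxodromic $A$,
$$a\,\phi^+(A) - b \;\le\; l(A) \;\le\; a'\,\phi^+(A) + b',$$
where $\phi^+(A) = \max\{\log|\tr(A)|,0\}$. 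The upper bound is immediate since $|\tr(A)| \le |\lambda| + |\mu| + |1/(\lambda\bar\mu)| \le |\lambda|+2$, so $\log|\tr A|$ is controlled by $l(A)$ up to additive error. The lower bound (bounding $l(A)$ below by $\phi^+$) is the delicate direction: when $|\lambda|$ is large, $|\tr A|\approx |\lambda|$ so it is fine, but one must rule out cancellation making $|\tr A|$ small while $|\lambda|$ is moderate — here one uses that $A$ is loxodromic and that near the deltoid $f^{-1}(0)$ things are controlled, so on the relevant range there is a uniform bound. This is exactly the kind of estimate implicit in \cite{bow_mar, schlich}, and I would cite that literature for the routine parts.

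Given this comparison, the forward direction ($[\rho]\in\X_{BQ}\Rightarrow$ the linear lower bound on $l$) proceeds as follows. By Proposition \ref{lower_fib}, $\phi_\rho^+$ has a lower Fibonacci bound on $\Omega$: there is $\kappa>0$ with $\phi_\rho^+(Z)\ge \kappa F_v(Z)$ for all $Z$. By Proposition \ref{prop:word}, $F_v(Z) = W(\omega_Z)$. Combining with the lower comparison $l(\rho(Z))\ge a\,\phi_\rho^+(Z) - b$, we get $l(\rho(\gamma)) \ge a\kappa\, W(\gamma) - b$ for all $\gamma\in\Sc$, which is the desired inequality with $k = a\kappa$ and $m = b$. (One should note here that $W$ as used in \ref{prop:word} is cyclically reduced word length with respect to a generating set adapted to the vertex $v$; since any two notions of $W$ for different generating sets are comparable up to multiplicative and additive constants, the statement is insensitive to this choice, as remarked after Lemma \ref{bound_fibonacci}.)

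For the converse, suppose $|l(\rho(\gamma))|\ge k\,W(\gamma) - m$ for all $\gamma\in\Sc$. First, this forces every $\rho(\gamma)$ to be loxodromic: if some $\rho(\gamma_0)$ were non-loxodromic, i.e.\ $f(\tr\rho(\gamma_0))\le 0$, then $\rho(\gamma_0)$ is elliptic or parabolic, so $l(\rho(\gamma_0))=0$; but then for every $n$ the element $\gamma_0^n$ (or rather a simple closed curve realizing it — one must be slightly careful that powers of simple curves are not simple, so instead one argues: a non-loxodromic $\rho(\gamma_0)$ would, via the neighbor analysis of Lemma \ref{functionH}(3), force $H_c(x)=\infty$ for $x=\tr\rho(\gamma_0)$, hence infinitely many neighboring regions $Y_n$ of $\gamma_0$ with $|\tr\rho(Y_n)|\le H_c(x)$ bounded, contradicting $W(Y_n)\to\infty$ together with the assumed lower bound on $l$, once we know $\phi_\rho^+(Y_n)$ is bounded implies $l(\rho(Y_n))$ is bounded via the upper comparison). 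This shows condition (1) of the previous Proposition holds. For condition (2): if some $\Omega_\rho(K)$ were infinite, pick a sequence $\gamma_i\in\Omega_\rho(K)$ of distinct simple closed curves; then $W(\gamma_i)\to\infty$ (only finitely many elements of $\Sc$ have bounded word length), while $\phi_\rho^+(\gamma_i)\le \log^+ K$ is bounded, so by the upper comparison $l(\rho(\gamma_i))$ is bounded, contradicting $l(\rho(\gamma_i))\ge k\,W(\gamma_i)-m\to\infty$. Hence both conditions of the characterization hold and $[\rho]\in\X_{BQ}$.

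The main obstacle I anticipate is the lower comparison $l(A)\ge a\,\phi_\rho^+(A)-b$: one needs a uniform (constant-independent, or at worst $c$-dependent) bound showing that a loxodromic element with large $|\tr|$ has proportionally large translation length, controlling the potential cancellation in $\tr A = \lambda + \mu + 1/(\lambda\bar\mu)$. Away from a neighborhood of the deltoid this is elementary, and inside such a neighborhood $|\tr A|$ is itself bounded (by $\approx 3$), so $\phi_\rho^+$ is bounded there and the inequality is trivial after adjusting $b$; assembling these two regimes into one clean statement is the technical heart. Everything else is bookkeeping with the already-established Fibonacci machinery.
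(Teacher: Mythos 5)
Your proof is correct and takes essentially the same route as the paper's: the forward direction combines Proposition~\ref{lower_fib} (lower Fibonacci bound), Proposition~\ref{prop:word} (identifying $F_v$ with $W$), and a displacement--trace comparison, while the converse verifies the two conditions of the Bowditch characterization by extracting infinitely many short curves of growing word length. The only stylistic difference is that the paper simply asserts the one-sided bound $\log^+|\tr(A)|\le |l(A)|$ and is terser about the converse, whereas you spell out the two-sided comparison (with additive constants) and make explicit the appeal to Lemma~\ref{functionH} and the upper comparison; both amount to the same argument.
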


\begin{proof}
For the forward direction, we use the fact that if $[\rho] \in \X_{BQ}$, then Proposition \ref{lower_fib} implies that $\phi_\rho^+$ has lower Fibonacci bound on all of $\Omega$, and hence that there exists $\kappa >0$ such that $\phi_\rho^+(X) \geq \kappa F_v (X)$ for all $X \in \Omega$. Proposition \ref{prop:word} tells us that for the appropriate $v \in \mathcal{E}^{(0)}$ we have that $F_v (\gamma) = W(\gamma)$ for all $\gamma \in \Sc$. We can use the fact that $\mathrm{log}^+(|\tr(A)|) \leq |l(A)|$ for all $A \in \mathrm{SU}(2 , 1)$ to see that for all $\gamma \in \Sc$ we have:
$$|l(\rho(\gamma))| \geq \mathrm{log}^+(|\tr(\rho(\gamma))|) \geq \kappa W(\gamma).$$

Conversely, if we have a representation $\rho$ which satisfies $|l (\rho (\gamma))| \geq k W(\gamma) - m $ for all $\gamma \in \Sc$, then we will show that $\rho$ satisfies conditions $(1)$ and $(2)$ from Definition \ref{def_bow}. Firstly, if we have an element $\gamma \in \Sc$ such that $f(\tr(\rho(\gamma))) \leq 0$, then we can find sequences of regions adjacent to $\gamma$ such that the word length increases, but the length remains bounded. 
Secondly, assume $[\rho] \in \mathfrak{X}_c$. If the set $\Omega(M(c))$ is infinite, then we can see that the inequality $|l (\rho (\gamma))| \geq k W(\gamma) - m$ cannot be satisfied for all $\gamma \in \Sc$, because we only have a finite amount of elements with bounded word length. 
\end{proof}

 \begin{Remark}
 	Note that the proof above shows that if $[\rho] \in \X_{BQ}$, then there exists $k>0$ such that $|l (\rho (\gamma))| \geq k W(\gamma)$ for all $\gamma \in \Sc$. In particular, we do not need the additive constant in the statement of the theorem. On the other hand, the result above is stronger for the converse implication when there is the additive constant, and this is why we are leaving the statement as it is written.
 \end{Remark}

Using this, we can show that all primitive-stable representations are in the Bowditch set. This characterization corresponds to the definition given by Schlich of Bowditch representations, where she proves that Bowditch representations from the free group $F_2$ into isometry groups of Gromov hyperbolic spaces are primitive stable. So a corollary of the characterization above and of Schlich's result is that $\X_{BQ}(F_2 , \SUtwo) = \X_{PS}(F_2 , \SUtwo)$.

\begin{Proposition}	\label{finite_graph}
	Let $c \in \C$ and $K \geq M(c)$, and let $[\rho] \in \X_c$. We have $[\rho]$ is in $\X_{BQ}$ if and only if the subgraph $T_\rho (K)$ is finite.
\end{Proposition}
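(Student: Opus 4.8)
The forward implication is immediate: if $[\rho]\in\X_{BQ}$, then writing $c=\mathfrak{b}([\rho])$, Proposition \ref{finite} already asserts that $T_\rho(K)$ is finite for every $K\ge M(c)$, in particular for the given $K$. So the work is entirely in the converse, which I would split according to the two defining conditions of a Bowditch representation in Definition \ref{def_bow}.

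Assume $T_\rho(K)$ is finite. First I would establish condition $(1)$, that $\rho(\gamma)$ is loxodromic for every $\gamma\in\Sc$. Arguing by contradiction, suppose there is a region $\alpha\in\Omega$ with $x:=\tr(\rho(\alpha))$ satisfying $f(x)\le 0$. By Corollary \ref{corol_f} all eigenvalues of $\rho(\alpha)$ lie on the unit circle, hence $|x|\le 3<6\le M(c)\le K$, so $\alpha\in\Omega_\rho(K)$. On the other hand $H_c(x)=\infty$ by Lemma \ref{functionH}$(3)$, so in the defining union for $J_\rho(K,\alpha)$ the cutoff $\max\{H_c(x),K\}=\infty$ imposes no constraint at all, forcing $J_\rho(K,\alpha)=\partial\alpha$; consequently $\widetilde J_\rho(K,\alpha)$ contains every triangle of $\mathcal E$ bordering $\alpha$, and there are infinitely many of these since $\alpha$ is an ideal polygon with infinitely many sides in $\mathbf H^2\setminus\mathcal T$. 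Since $\alpha\in\Omega_\rho(K)$, this infinite subgraph lies inside $T_\rho(K)=\bigcup_{X\in\Omega_\rho(K)}\widetilde J_\rho(K,X)$, contradicting finiteness. Hence $f(\tr\rho(\gamma))>0$ for all $\gamma\in\Sc$.

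Next I would establish condition $(2)$, that the set in Definition \ref{def_bow}$(2)$ is finite; since it is contained in $\Omega_\rho(M(c))\subseteq\Omega_\rho(K)$, it suffices to show $\Omega_\rho(K)$ is finite. For each $X\in\Omega_\rho(K)$ the subgraph $\widetilde J_\rho(K,X)\subseteq T_\rho(K)$ is nonempty (it contains the nonempty arc $J_\rho(K,X)$) and, by construction, is a union of triangles of $\mathcal E$ each of which borders $X$; choose one such triangle $\Delta_X$. A triangle of $\mathcal E$ corresponds to a vertex of $\mathcal T$, so it borders exactly three regions; hence the assignment $X\mapsto\Delta_X$ is at most three-to-one into the finite set of triangles of $T_\rho(K)$, and therefore $\Omega_\rho(K)$ is finite. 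With conditions $(1)$ and $(2)$ in hand, $[\rho]\in\X_{BQ}$, which completes the converse and the proof.

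The only delicate point, and the step I expect to require the most care, is this last counting argument: one must be sure that every $X\in\Omega_\rho(K)$ genuinely occupies at least one triangle of $T_\rho(K)$ — i.e. that $\widetilde J_\rho(K,X)$ is nonempty and contains a full triangle rather than a stray edge, which is exactly what the enlargement of $J_\rho(K,X)$ to $\widetilde J_\rho(K,X)$ in Section \ref{sec:tree} guarantees — and that a single triangle of $\mathcal E$ is shared by only boundedly many (precisely three) regions. Everything else is a matter of unwinding the definitions of $\Omega_\rho$, $J_\rho$, $\widetilde J_\rho$ and $T_\rho$, together with Corollary \ref{corol_f} and Lemma \ref{functionH}.
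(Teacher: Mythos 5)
Your proof is correct and follows the same overall strategy as the paper's: the forward direction is Proposition~\ref{finite}, and the converse is handled by showing that failure of either Bowditch condition would force $T_\rho(K)$ to be infinite. You also supply two details the paper's proof leaves implicit, namely that a non-loxodromic region has trace of modulus at most $3$ and hence automatically lies in $\Omega_\rho(K)$ so that $\widetilde J_\rho(K,\alpha)$ really does contribute to $T_\rho(K)$, and the three-to-one counting showing that infinitely many regions in $\Omega_\rho(K)$ cannot all reuse finitely many triangles.
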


\begin{proof}
	The forward implication follows from Proposition \ref{finite}, so we only need to prove that if $T_\rho (K)$ is finite for $K \geq M(c)$, then $[\rho] \in \X_{BQ}$.  We prove the contrapositive. Suppose $[\rho] \notin \X_{BQ}$, then we have two cases.
	
	The first case is that there exists $X \in \Omega$ such that $\rho(\gamma)$ is not loxodromic (or equivalently that $f(\tr(\rho(\gamma))) \leq 0$). In this case the arc $\widetilde{J}(K, X)$ (and hence the subgraph $T_\rho (K)$) is infinite.

	The second case is that the set $\Omega(K)$ is infinite, so we can see again that the subgraph $T_\rho (K)$ is infinite because for each $X \in \Omega(K)$ the arc $\widetilde{J}(K, X)$ is non-empty.
	
	Having addressed both cases, we have proved the proposition.
\end{proof}

In the final part of this section we will show why the Bowditch set $\X_{BQ}$ strictly contains the set $\X_{CC}$ of convex cocompact representations in $\X(F_2, \SU(2,1))$. Recall that convex cocompact representations can be defined in many equivalent ways, especially since $\SU(2,1)$ is a rank $1$ Lie group. For example we say that a representation $[\rho]\in \X(F_2, \SU(2,1))$ is \textit{convex cocompact} if there exists $k, m >0$ such that $|l (\rho (\gamma))| \geq k W(\gamma) - m $ for all $\gamma \in F_2$ (or, equivalently, if the (any) orbit map $\tau_\rho$ is a quasi-isometric embedding). We can see from the definition that $\X_{CC} \subset \X_{BQ}.$ We now want to show that this is a proper inclusion. We will use a result of Will \cite{Will-07, Will-12} that describes a 3 dimensional family of discrete, faithful and type-preserving representations of the once-punctured torus in $\X(F_2, \SU(2,1))$. These representations will take simple closed curves to loxodromic elements and will map the commutator to a parabolic unipotent element. Let $F_2:=\langle \alpha, \beta \rangle$. Since simple closed curves in $S_{1,1}$ corresponds to primitive elements of $F_2$, the representations described by Will are in the Bowditch set. One can see this using one of our characterizations of $\X_{BQ}$, or the equivalence of $\X_{BQ}$ with the set of primitive stable representations in $\X(F_2, \SU(2,1))$ implied by Schlich \cite{schlich}. Remember that the peripheral element corresponds to the commutator, which is not a primitive element. The fact that the commutator $[\alpha, \beta]$ is mapped to a unipotent element by Will's representations prevents them from being convex cocompact representations. If one would like to see other examples of representations that are in $\X_{BQ} \setminus \X_{CC}$ and such that the peripheral element is mapped to a parabolic non-unipotent element, one can use the work of Falbel-Parker \cite{Falbel-Parker}, Falbel-Koseleff \cite{Falbel-Kos-rig, Falbel-Kos-circ}, or Parker-Gusevskii \cite{Gus-Parker-03, Gus-Parker-00}. See also Will \cite{Will-16} for a discussion of these references. These examples imply the following result.

\begin{Proposition}
	The set $\X_{BQ}$ strictly contains the set of convex cocompact representations $\X_{CC}$.
\end{Proposition}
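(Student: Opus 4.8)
The plan is to combine the (easy) inclusion $\X_{CC}\subset\X_{BQ}$ with the production of an explicit family of representations lying in $\X_{BQ}$ but not in $\X_{CC}$.

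\emph{The inclusion.} First I would recall that a convex-cocompact $[\rho]$ satisfies, by definition, $|l(\rho(\gamma))|\geq kW(\gamma)-m$ for all $\gamma\in F_2$ with $k,m>0$; restricting this inequality to $\gamma\in\Sc$ is exactly condition $(BQ3)$, so by the displacement characterization of $\X_{BQ}$ proved above, $[\rho]\in\X_{BQ}$. Along the way one notes that for such $\rho$ the peripheral element $\rho([\alpha,\beta])$ is loxodromic, so $c=\tr\rho([\alpha,\beta])\notin f^{-1}(0)$ and $[\rho]\in\X_c$ lies in the smooth part; this is consistent with, but not needed for, the inclusion.

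\emph{Strictness.} To show the inclusion is proper I would exhibit $[\rho]\in\X_{BQ}\setminus\X_{CC}$. Take $\rho\co F_2\to\SU(2,1)$ to be one of the discrete, faithful, geometrically finite representations of $\pi_1(S_{1,1})$ constructed by Will \cite{Will-07,Will-12}, chosen so that $\rho([\alpha,\beta])$ is unipotent — so $c=\tr\rho([\alpha,\beta])\in 3\zeta_3$, say $c=3$ — or, for a variant with $\rho([\alpha,\beta])$ parabolic but not unipotent (hence $c$ a non-cusp point of the deltoid), one of the representations of Falbel--Parker \cite{Falbel-Parker}. Since $\rho([\alpha,\beta])$ is parabolic, $l(\rho([\alpha,\beta]^n))=0$ for every $n\in\N$, while $W([\alpha,\beta]^n)\to\infty$; hence no inequality $|l(\rho(\gamma))|\geq kW(\gamma)-m$ with $k>0$ can hold for all $\gamma\in F_2$ — equivalently, the orbit map is not a quasi-isometric embedding because the peripheral cyclic subgroup is parabolic, hence distorted. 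Thus $[\rho]\notin\X_{CC}$. It remains to see $[\rho]\in\X_{BQ}$: the single cusp of $S_{1,1}$ is carried by $[\alpha,\beta]$, which is \emph{not} primitive in $F_2$, whereas every element of $\Sc$ is primitive and non-peripheral, so for a geometrically finite $\rho$ whose only cusp is at the commutator the orbit map restricted to conjugacy classes not supported on the peripheral subgroup — in particular to $\Sc$ — is a quasi-isometric embedding, giving $k,m>0$ with $|l(\rho(\gamma))|\geq kW(\gamma)-m$ for all $\gamma\in\Sc$, hence $[\rho]\in\X_{BQ}$ by the characterization above. Equivalently, and more cleanly, one invokes the identity $\X_{BQ}(F_2,\SU(2,1))=\X_{PS}(F_2,\SU(2,1))$ of Schlich \cite{schlich}: Will's representations are primitive-stable precisely because geometric finiteness forces primitive elements to be sent to loxodromics with translation length comparable to word length while the parabolicity is confined to the non-primitive element $[\alpha,\beta]$, so $[\rho]\in\X_{PS}=\X_{BQ}$. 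Combining this with the previous paragraph gives $[\rho]\in\X_{BQ}\setminus\X_{CC}$.

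\emph{Main obstacle.} The delicate point is the last step: verifying that Will's (or Falbel--Parker's) representations genuinely lie in $\X_{BQ}$, i.e. that a cusped geometrically finite representation of $F_2$ restricts to a quasi-isometric embedding on the non-peripheral curves. One must ensure the cusp does not ``leak'' into primitive conjugacy classes, and this is exactly where it is essential that $[\alpha,\beta]$ is not primitive. Rather than reproving this I would cite it — either directly from Will's analysis of the limit set and the Ford domain of these groups, or, most economically, from Schlich's primitive-stability theorem together with the standard fact that a geometrically finite representation whose parabolics lie in the peripheral subgroup is primitive-stable.
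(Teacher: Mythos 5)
Your proposal is correct and follows essentially the same route as the paper: the inclusion $\X_{CC}\subset\X_{BQ}$ is read off from the displacement characterization $(BQ3)$, and strictness is witnessed by Will's geometrically finite representations (with Falbel--Parker as a variant), which fail to be convex-cocompact because $\rho([\alpha,\beta])$ is parabolic, yet lie in $\X_{BQ}$ because $[\alpha,\beta]$ is not primitive and, via Schlich's equivalence $\X_{BQ}=\X_{PS}$, primitive stability holds. The paper makes precisely these moves, including the same citations, so there is nothing substantive to add.
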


\subsection{Openness and Proper Discontinuity} \label{sec:proof}

In this section we prove that the set $\X_{BQ}$ is open and the group $\mathrm{Out}(F_2)$ acts properly discontinuously on it. First we prove the following lemma which will imply the openness of $\X_{BQ, c}$.

\begin{Lemma}
	Let $c \in C$ and let $K \geq M(c)$. For each $[\rho] \in \X_{BQ, c}$, if $T_\rho (K)$ is non-empty, there exists a neighborhood $U_\rho$ of $[\rho]$ in $\X_c$ such that for all $[\sigma] \in U_\rho$, the subgraph $T_\sigma (K) $ is contained in $T_\rho (K)$. 
\end{Lemma}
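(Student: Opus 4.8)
The plan is to exploit the finiteness and combinatorial rigidity of $T_\rho(K)$ together with continuity of trace functions. Since $[\rho]\in\X_{BQ,c}$, Proposition \ref{finite} tells us $T_\rho(K)$ is a finite, connected, $\rho$--attracting subgraph of $\mathcal{E}$. Write $\Omega_\rho(K)=\{X_1,\dots,X_N\}$, which is finite by the definition of Bowditch. For each region $X_i\in\Omega_\rho(K)$ the arc $\widetilde J_\rho(K,X_i)$ is finite (the neighbours of $X_i$ have exponentially growing traces, so only finitely many edges of $\partial X_i$ satisfy $|y_n|,|y_{n+1}|\le\max\{H_c(x_i),K\}$), and $T_\rho(K)$ is their finite union. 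The key point is that $T_\rho(K)$ is determined by a \emph{finite} list of trace inequalities: which regions $X$ have $|\tr(\rho(X))|\le K$, and, for the finitely many edges in a bounded neighbourhood of these regions, which of the two opposing-region traces is strictly larger (i.e.\ the $\rho$--orientation of those edges). All of these are strict inequalities at $[\rho]$, except possibly some equalities $|t|=|t'|$; but for a Bowditch representation there are only finitely many edges with $|t|=|t'|$ (this is part of the hypothesis/discussion in Section \ref{orientation}, and follows because an infinite set of such edges would force an escaping ray, contradicting Lemma \ref{lem:escaping} and the finiteness of $T_\rho(M)$).

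First I would enlarge $T_\rho(K)$ slightly to a finite subgraph $T'$: include $T_\rho(K)$ together with every triangle of $\mathcal{E}$ adjacent to it, i.e.\ a finite ``collar.'' On the finitely many edges of $T'$ I record the open conditions satisfied by $\rho$: for every region $X$ with a triangle in $T'$ but $X\notin\Omega_\rho(K)$ we have $|\tr(\rho(X))|>K$; for every edge in the collar but not in $T_\rho(K)$, the $\rho$--oriented arrow points \emph{strictly} into $T_\rho(K)$ (strictness because otherwise $T_\rho(K)$ would not be the maximal attracting union of triangles, or we would have an $|t|=|t'|$ edge, which by enlarging the collar finitely we may assume does not occur at the boundary). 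Since each of the finitely many traces $\tr(\sigma(X))$ for $X$ ranging over regions with a triangle in $T'$ is a continuous function of $[\sigma]\in\X_c$ (the word-trace maps $\tr_w$ are polynomial, hence continuous, in the coordinates), there is a neighbourhood $U_\rho$ of $[\rho]$ on which all of these finitely many strict inequalities persist: for $[\sigma]\in U_\rho$ and every region $X$ with a triangle in $T'\setminus T_\rho(K)$ we still have $|\tr(\sigma(X))|>K$ and $|\tr(\sigma(X))|>\max\{H_c(\tr(\sigma(X')))\}$ as needed, and every collar arrow still points into $T_\rho(K)$.

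Next I would argue that these persisting inequalities force $T_\sigma(K)\subset T_\rho(K)$. First, $\Omega_\sigma(K)\subset\Omega_\rho(K)$: any region $X\notin\Omega_\rho(K)$ whose boundary meets $T'$ still has $|\tr(\sigma(X))|>K$; and a region outside $\Omega_\rho(K)$ whose boundary does \emph{not} meet the collar cannot suddenly enter $\Omega_\sigma(K)$ because, crossing the collar from $T_\rho(K)$ outward, all $\sigma$--arrows point inward, so traces are monotonically increasing (by the structure in Lemma \ref{functionH}) as one moves away, keeping them above $K$. Hence no new region enters $\Omega_\sigma(K)$. Second, for each $X_i\in\Omega_\sigma(K)\subset\Omega_\rho(K)$, I claim $\widetilde J_\sigma(K,X_i)\subset\widetilde J_\rho(K,X_i)$: the arc $\widetilde J_\sigma(K,X_i)$ consists of triangles of $\partial X_i$ whose relevant neighbour traces are $\le\max\{H_c(\tr(\sigma(X_i))),K\}$, and outside $\widetilde J_\rho(K,X_i)$ the $\sigma$--arrows (being close to the $\rho$--arrows on this finite collar) still point toward $\widetilde J_\rho(K,X_i)$, so $\widetilde J_\sigma(K,X_i)$ cannot extend past it. Taking the union over $X_i$ gives $T_\sigma(K)=\bigcup_{X_i\in\Omega_\sigma(K)}\widetilde J_\sigma(K,X_i)\subset\bigcup_{X_i\in\Omega_\rho(K)}\widetilde J_\rho(K,X_i)=T_\rho(K)$.

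The main obstacle is the bookkeeping at the boundary of the collar: one must make sure that the finitely many strict inequalities chosen are genuinely enough to pin down $T_\sigma(K)$, i.e.\ that a small perturbation cannot cause $\widetilde J_\sigma$ to ``escape'' $T_\rho(K)$ through some triangle where the defining inequality for $\rho$ happened to be an equality ($H_c(x)=K$ exactly, or $|t|=|t'|$). This is handled by noting that $H_c$ is continuous and $M(c)$--valued, that only finitely many edges have $|t|=|t'|$ for a Bowditch $\rho$, and that by taking the collar one or two triangles wider than strictly necessary we absorb these degenerate edges into the interior of $T'$, so that the boundary inequalities we rely on are all strict. Once strictness is secured, continuity of the finitely many trace functions does the rest, and $U_\rho$ is obtained as a finite intersection of the corresponding open conditions in $\X_c$.
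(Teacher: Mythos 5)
Your outline---finiteness of $T_\rho(K)$, recording finitely many strict trace inequalities near its boundary, and propagating them to nearby $\sigma$ by continuity---matches the paper's strategy in spirit, but the way you conclude containment diverges from the paper and has a gap. The paper controls traces only at the finitely many \emph{boundary vertices} $v=(X,X')$ of $T_\rho(K)$: at least one of $|\tr(\rho(X))|>K$, $|\tr(\rho(X'))|>K$ holds, and this persists for $\sigma$ close to $\rho$, so no such $v$ lies in the interior of $T_\sigma(K)$. It then invokes connectedness of $T_\sigma(K)$ (Proposition \ref{attracting}) together with $T_\sigma(K)\cap T_\rho(K)\neq\emptyset$: a connected subgraph that meets $T_\rho(K)$ but cannot pass through any of its boundary vertices must be contained in it. You instead attempt a direct inclusion, showing $\Omega_\sigma(K)\subset\Omega_\rho(K)$ and then $\widetilde J_\sigma(K,X_i)\subset\widetilde J_\rho(K,X_i)$ region by region, and it is the step handling regions far from your collar that is not justified. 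You claim that ``traces are monotonically increasing as one moves away'' by Lemma \ref{functionH}, but that lemma only yields monotonicity along the circle of neighbours of a \emph{single} region; it does not by itself rule out the $\sigma$-orientation reversing at some vertex beyond your collar, which would create a fork and hence a small-trace region far away that your finitely many continuity conditions do not see. To close this gap you must appeal to a global connectedness fact---for instance, that $\Omega_\sigma(K)$ is connected for $K\ge M(c)$: if a far region entered $\Omega_\sigma(K)$, it would be joined through $\Omega_\sigma(K)$ to the small-trace regions near $T_\rho(K)$, and that chain would have to cross the collar, contradicting the collar inequalities. Once you add that, your argument essentially reproduces the paper's, which packages the needed connectedness more efficiently via $T_\sigma(K)$ and Proposition \ref{attracting}.
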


\begin{proof}
	Let $[\rho] \in \X_{BQ, c}$ and let $K > M(c)$ such that $T_\rho (K)$ is non-empty. First, it is easy to see that for $\sigma$ close enough to $\rho$, the subgraphs $T_\sigma (K)$ and $T_\rho (K)$ have non-empty intersection.
	
	Now consider the set $V \subset \mathcal{E}^{(0)}$ of vertices of $\mathcal{E}$ which are on the boundary of $T_\rho (K)$. Let $v \in V$. We can show that if $[\sigma] \in \X_c$ is close enough to $[\rho]$, then $v$ is also not in the interior of $T_\sigma (K)$. From the definition of $T_\rho (K)$ we can see that if $v = (X, X')$, then we have $| \tr(\rho (X))| > K$  or $| \tr (\rho (X'))| > K$. If we choose $\sigma$ close enough to $\rho$, then we can show that the same inequality will be true for $\sigma$ as well. Since the subgraph $T_\rho (K)$ is finite, there is only a finite number of vertices in $V$, so we can choose $\sigma$ close enough to $\rho$ such that any vertex $v \in V$ is not on the interior of $T_\sigma (K)$. 
	
	Now, since $T_\sigma (K)$ is connected  and $T_\sigma (K) \cap T_\rho (K)$ is non-empty, we have that the subgraph $T_\sigma(K)$ is contained in $T_\rho(K)$. 
\end{proof}

The last result of this paper proves proper discontinuity for the action of $\mathrm{Out}(F_2)$ on $\X_{BQ, c}$. 

\begin{Theorem}
	The set $\X_{BQ, c}$ is open in $\X_c$ and the action of $\mathrm{Out}(F_2)$ is properly discontinuous on $\X_{BQ, c}$.
\end{Theorem}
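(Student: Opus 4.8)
The plan is to read off both assertions from the ``nesting lemma'' just proved (the one producing a neighbourhood $U_\rho$ with $T_\sigma(K)\subseteq T_\rho(K)$), together with Proposition~\ref{finite_graph}, Proposition~\ref{finite}, and the fact that $\Out(F_2)\cong\mathrm{GL}(2,\Z)=\MCG(S_{1,1})$ acts on the complexes $\Sc,\mathcal C,\mathcal T,\Ec$ exactly as $\mathrm{GL}(2,\Z)$ acts on the Farey complex. For openness, I would fix $[\rho]\in\X_{BQ,c}$ and a curve $\gamma_0\in\Sc$, and set $K:=\max\{M(c),\,|\Tr\rho(\gamma_0)|\}$, so that $K\ge M(c)$ and $\gamma_0\in\Omega_\rho(K)$; then $T_\rho(K)$ contains the non-empty arc $\widetilde J_\rho(K,\gamma_0)$ and is finite by Proposition~\ref{finite}. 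The preceding Lemma gives a neighbourhood $U_\rho$ of $[\rho]$ in $\X_c$ with $T_\sigma(K)\subseteq T_\rho(K)$ for all $[\sigma]\in U_\rho$, so each such $T_\sigma(K)$ is finite, and Proposition~\ref{finite_graph} (applied with this $K\ge M(c)$) yields $[\sigma]\in\X_{BQ}\cap\X_c=\X_{BQ,c}$. Hence $U_\rho\subseteq\X_{BQ,c}$ and $\X_{BQ,c}$ is open in $\X_c$.

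For proper discontinuity, let $\mathcal K\subseteq\X_{BQ,c}$ be compact. I would first fix a single threshold $K\ge M(c)$ that works uniformly on $\mathcal K$: since $[\rho]\mapsto|\Tr\rho(\gamma_0)|$ is continuous, it is bounded on $\mathcal K$, so $K:=\max\{M(c),\ \sup_{[\rho]\in\mathcal K}|\Tr\rho(\gamma_0)|\}$ makes $\gamma_0\in\Omega_\rho(K)$, hence $T_\rho(K)$ non-empty and finite, for every $[\rho]\in\mathcal K$. Applying the preceding Lemma at each point of $\mathcal K$ and passing to a finite subcover $U_{\rho_1},\dots,U_{\rho_n}$, I set $T^{\ast}:=\bigcup_{i=1}^{n}T_{\rho_i}(K)$, a \emph{finite} subgraph of $\Ec$, and observe that $T_\sigma(K)\subseteq T^{\ast}$ for every $[\sigma]\in\mathcal K$.

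The remaining point is equivariance. If $g\in\Out(F_2)$ preserves $\X_c$ (this holds for all of $\Out(F_2)$ when $c\in\R$, and for an index-two subgroup otherwise, since an automorphism sending the commutator to a conjugate of its inverse replaces $c$ by $\overline c$), and $g$ is represented by $\phi\in\mathrm{Aut}(F_2)$, then $|\Tr((g\cdot\rho)(g\cdot X))|=|\Tr\rho(X)|$ for $X\in\Sc$; combined with the invariance of $M(c)$ and of $H_c\circ\Tr$ under this action, one checks that $g$ intertwines the whole construction, so that $T_{g\cdot\rho}(K)=g\cdot T_\rho(K)$ as subgraphs of $\Ec$ (here $g$ acts on $\Ec$ through its induced simplicial action). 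Now suppose $g\mathcal K\cap\mathcal K\ne\emptyset$ and pick $[\sigma]\in\mathcal K$ with $[g\cdot\sigma]\in\mathcal K$. Then $T_\sigma(K)$ is a non-empty subgraph contained in $T^{\ast}$, so it has some vertex $v\in\Ec^{(0)}$ with $v\in T^{\ast}$, while $g\cdot v\in g\cdot T_\sigma(K)=T_{g\cdot\sigma}(K)\subseteq T^{\ast}$, so $g\cdot v$ is again a vertex of $T^{\ast}$. Since $\Ec^{(0)}\cong\mathcal T^{(1)}\cong\mathcal C^{(1)}$ and the $\Out(F_2)\cong\mathrm{GL}(2,\Z)$--action on $\mathcal C^{(1)}$ has finite point stabilizers (a vertex of $\mathcal C^{(1)}$ corresponds to a unimodular pair of primitive classes, whose stabilizer in $\mathrm{GL}(2,\Z)$ is a group of signed permutation matrices; equivalently, $\mathrm{PGL}(2,\Z)$ acts properly discontinuously on $\HH$ preserving the Farey tessellation and $\mathrm{GL}(2,\Z)\to\mathrm{PGL}(2,\Z)$ has kernel of order $2$), the set of $g$ carrying one of the finitely many vertices of $T^{\ast}$ to another is a finite union of cosets of finite subgroups, hence finite. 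Therefore $\{g\in\Out(F_2):g\mathcal K\cap\mathcal K\ne\emptyset\}$ is finite, which is exactly proper discontinuity.

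The hard part here is not analytic — all the estimates live in the earlier lemmas — but rather the careful bookkeeping of step three: verifying that the attracting subgraph $T_\rho(K)$ is genuinely natural for the $\Out(F_2)$--action, including that $M(c)$, the function $H_c$, and the $\rho$--orientation of the edges of $\Ec$ all transform correctly, so that $T_{g\cdot\rho}(K)=g\cdot T_\rho(K)$ holds on the nose. Once that is in hand, the discreteness is simply imported from the classical fact that $\mathrm{GL}(2,\Z)$ acts with finite cell stabilizers on the Farey complex.
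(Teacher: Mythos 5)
Your argument follows the paper's proof essentially step for step: openness from the nesting lemma combined with the characterization via finiteness of $T_\rho(K)$, and proper discontinuity by passing to a finite subcover, forming the finite union $T^*$ of attracting subgraphs, and invoking the equivariance $T_{g\cdot\sigma}(K)=g\cdot T_\sigma(K)$ to reduce to the finiteness of $\{g : g\,T^*\cap T^*\neq\emptyset\}$. The extra details you supply---the concrete choice of $K$ guaranteeing each $T_\rho(K)$ is non-empty, the verification that $M(c)$, $H_c$, and the edge orientations transform correctly, the index-two remark for $c\notin\R$, and the appeal to finite cell stabilizers of $\mathrm{GL}(2,\Z)$ on the Farey complex---are all points the paper states or uses without elaboration, and your bookkeeping is correct.
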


\begin{proof}
	The previous lemma directly implies openness, using the characterization of representations in $\X_{BQ, c}$ given by Proposition \ref{finite_graph}.

In order to prove that the action of $\mathrm{Out}(F_2)$ is properly discontinuous, let $\mathsf{C}$ be a compact subset of $\X_{BQ, c}$. We want to prove that the set 
	$$\Gamma_{\mathsf{C}} = \{ g \in \mathrm{Out}(F_2) \ | \ g \mathsf{C} \cap \mathsf{C} \neq \emptyset \}$$
	is finite.
	
	Let $K \geq M(c)$ be such that for any $[\rho] \in \mathsf{C}$, the tree $T_\rho (K)$ is non-empty. Now around each element of $\mathsf{C}$ there exists a neighborhood $U_\rho$ given by the previous lemma. So we have that the set $(U_{\sigma})_{[\sigma] \in \mathsf{C}}$ is an open covering of $\mathsf{C}$. Using the compactness of $\mathsf{C}$, we take a finite subcover $(U_{\rho_i})_{i\in I}$, where $I$ is a finite set.
	
	Now, for each element $\rho_i$, we take the tree $T_{\rho_i} (K)$ and consider the union
	$$\mathcal{T}(K) = \bigcup_{i \in I} T_{\rho_i} (K).$$
	By construction, for each element $[\sigma] \in \mathsf{C}$, the subgraph $T_\sigma (K)$ is contained in $\mathcal{T}(K)$. Since the subgraph $\mathcal{T}(K)$ is a finite union of finite subgraphs, it is itself finite. It follows that the set 
	$$\Gamma_1 = \{ g \in \Gamma \ | \ g \mathcal{T}(K)  \cap \mathcal{T}(K)  \neq \emptyset \}$$
	is finite. Finally, as $T_{g\sigma} (K) = g T_\sigma (K)$, we have that $\Gamma_{\mathsf{C}} \subset \Gamma_1$, and hence $\Gamma_{\mathsf{C}}$ is finite, which proves that $\mathrm{Out} (F_2)$ acts on $\X_{BQ, c}$ properly discontinuously.
\end{proof}

\appendix

\section{Computations}

\subsection{\texorpdfstring{$f=0$}{f=0} case}\label{app:fzero}
Recall the set-up from Section \ref{escaping}. Let $\a$ be the region in $\mathcal{C}^{(0)}$ which will serve as our central region. The neighbors are all of the form  $\g_n = \a^n \b$ for all $n \in \N$. Let $[\rho] \in \X_c$ be a representation, with $c \in \C$ (and $P,Q \in \R$). Let $x = \tr (\rho (\a))$ and consider the sequence $u_n = \tr (\rho (\g_n))$. The trace identity gives, for all $n \in \Z$, the following relation: $$u_{n+3} = u_n - \overline{x} u_{n+1} + x u_{n+2}.$$

To study the solutions of this recurrence relation we consider the matrix $$M_x = \begin{pmatrix} x & - \overline x & 1 \\ 1 & 0 & 0 \\ 0 & 1 & 0 \end{pmatrix},$$ and we define $$X_n = \begin{pmatrix} u_{n+2} \\ u_{n+1} \\ u_n \end{pmatrix} \in \mathbb{C}^3,$$ so that we have $X_{n+1} = M_x X_n$.  The matrix $M_x$ is conjugate to a matrix in $\rm{SU} (2,1)$. 

Given a matrix $A\in\rm{SU} (2,1)$, recall that its characteristic polynomial is given by $\chi_A(x) = x^3 - t x^2 + \overline t x -1$, where $t=\tr(A)$. The resultant of $\chi_A$ and its derivative tells us when $\chi_A$ has multiple roots, and hence when $A$ has an eigenvalue with multiplicity greater than $1$. Recall also the definition of the function $f\co\C\to \R$ by $$f(t) = \mathrm{Res} (\chi_A , \chi_A') =  |t|^4 - 8 \mathrm{Re} (t^3) + 18 |t|^2 - 27.$$

\begin{Lemma}
If $f(x)= 0$, then the coefficients of the matrix $M_x^n$ grow at most quadratically in $n$, and hence the growth of the sequence $(X_n)_{n\in \Z}$, and hence of the sequence $(u_n)_{n\in \Z}$ is at most quadratic.
\end{Lemma}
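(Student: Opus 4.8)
The plan is to put $M_x$ into Jordan normal form and read off the growth of $M_x^n$ from the sizes of the Jordan blocks together with the fact that all eigenvalues lie on the unit circle. First I would record that $M_x$ is the companion matrix of $\chi_x(s)=s^3-xs^2+\overline x s-1$; in particular it is non-derogatory, so its minimal polynomial equals $\chi_x$ and every eigenvalue has geometric multiplicity $1$. Since $f(x)=0$, Corollary \ref{corol_f} applies (via the conjugacy of $M_x$ into $\mathrm{SU}(2,1)$) and tells us that $\chi_x$ has a repeated root and that all three roots of $\chi_x$ lie on the unit circle. Combining these two facts, the Jordan form $J$ of $M_x$ is either a single $3\times 3$ block $J_3(\lambda)$ with $|\lambda|=1$ (when $\chi_x$ has a triple root, i.e.\ $x\in 3\zeta_3$), or the direct sum of a $2\times 2$ block $J_2(\lambda)$ and a $1\times 1$ block $(\mu)$ with $|\lambda|=|\mu|=1$.

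Next I would compute powers of a Jordan block directly: for $J_k(\lambda)$ one has $(J_k(\lambda)^n)_{ij}=\binom{n}{j-i}\lambda^{n-(j-i)}$ for $i\le j$ and $0$ otherwise. Since $k\le 3$ and $|\lambda|=1$, every entry of $J^n$ has modulus at most $1+|n|+\tfrac12|n|(|n|-1)\le (1+|n|)^2$, and this bound holds for every $n\in\Z$: for $n<0$ one uses that $M_x^{-1}$ has a Jordan form of exactly the same shape, with eigenvalues $\lambda^{-1},\mu^{-1}$ still on the unit circle. Writing $M_x=PJP^{-1}$ with $P$ independent of $n$, we get $M_x^n=PJ^nP^{-1}$, so every entry of $M_x^n$ is bounded by $c_x(1+n^2)$ for a constant $c_x>0$ depending only on $P$, hence only on $x$. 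This is exactly the assertion that the entries of $M_x^n$ grow at most quadratically in $n$.

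Since $X_n=M_x^nX_0$, this immediately gives $\|X_n\|\le c_x'(1+n^2)\|X_0\|$ for a suitable $c_x'$, and because $u_n$ is one of the coordinates of $X_n$ we conclude that $(u_n)_{n\in\Z}$ grows at most quadratically in $n$. The argument is essentially routine; the only point that deserves a little care is the use of the non-derogatory property of the companion matrix, which is what prevents $M_x$ from being diagonalizable when $\chi_x$ has a repeated root and thereby pins down the Jordan structure — and hence the precise polynomial rate of growth — in each case. Alternatively one can sidestep the matrix formulation and solve the linear recurrence $u_{n+3}=xu_{n+2}-\overline x u_{n+1}+u_n$ directly: its general solution is $u_n=(A+Bn)\lambda^n+C\mu^n$ or $u_n=(A+Bn+Cn^2)\lambda^n$ in the two cases above, and since $|\lambda|=|\mu|=1$ this yields the (at most) quadratic bound at once.
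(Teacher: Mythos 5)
Your proof is correct and uses essentially the same approach as the paper: pass to Jordan normal form, note that $f(x)=0$ forces a repeated eigenvalue with all eigenvalues on the unit circle, and bound the entries of $J^n$ by a polynomial of degree at most two. The only difference is that you spell out a couple of points the paper leaves implicit (the companion matrix is non-derogatory, so the repeated eigenvalue cannot split into trivial Jordan blocks; and the bound for $n<0$ follows by applying the same argument to $M_x^{-1}$), which is a welcome clarification rather than a change of method.
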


\begin{proof}
Since $f(x)=0$ there are repeated roots to the characteristic polynomial and all roots have modulus $1$.  So, up to conjugation (potentially in $\mathrm{GL}(3,\C)$), the matrix $M_x$ is of the form $$\left(
\begin{array}{ccc}
 a & 1 & 0 \\
 0 & a & 1 \\
 0 & 0 & a \\
\end{array}
\right)\text{ or }\left(
\begin{array}{ccc}
 a & 1 & 0 \\
 0 & a & 0 \\
 0 & 0 & \frac{1}{a^2} \\
\end{array}
\right),$$ where $a$ is a cube root of unity. In the first case, $$M_x^n=\left(
\begin{array}{ccc}
 a^n & na^{n-1} & n\left(\frac{n-1}{2}\right)a^{n-2} \\
 0 & a^n & na^{n-1} \\
 0 & 0 & a^n \\
\end{array}
\right),$$ and so the matrix entries are bounded above by $n\left(\frac{n-1}{2}\right)$.  In the second case, $$M_x^n=\left(
\begin{array}{ccc}
 a^n & na^{n-1} & 0 \\
 0 & a^n & 0 \\
 0 & 0 & a^{-2n} \\
\end{array}
\right),$$ and so the entries are bounded above by $n$.  In both cases the growth is at most quadratic.
\end{proof}

\subsection{\texorpdfstring{$P=6$}{P=6} case}

Recall from earlier that: $$x = \tr (\rho (\a)), \ u_{n+1} = \tr (\rho (\alpha^{n+1} \beta )), \ u_{n+2} = \tr ( \rho (\alpha (\alpha^{n+1}) \beta)), \ \overline{u_n} = \tr (\rho (\alpha (\alpha^{n+1} \beta)^{-1} )).$$  We can identify the quadruple $(x, u_{n+1} , u_{n+2} , \overline{u_n})$ as the character of a representation in $\X_c$, and hence the quadruple will satisfy the equations for $P$ and $Q$. For large $n$, the quadruple is arbitrarily close to a vector of the form $(x, B, B{\bar \lambda}{\lambda^{-1}} , \overline{B}{\bar \lambda}{\lambda^{-1}})$ for $\lambda\in \C^*$ and $B\in \C$.  

Therefore, this quadruple satisfies the  equations for $P$ and $Q$:
	\begin{align*}
		P & = |x|^2 |B|^2 + |x|^2 + |B|^2+|B|^2+|B|^2 - 2 \Re (x B {\lambda}{\bar \lambda}^{-1} \overline{B}) - 2 \Re (\overline{x} B {\bar \lambda}{\lambda^{-1}} \overline{B} ) -2 \\
		& = |B|^2 ( |x|^2 - 4 \Re ( x {\lambda}{ \bar \lambda }^{-1} ) +3) - 3 + |x|^2.
		\end{align*}
So we obtain an expression for $|B|$ in terms of $P$ and $x$:
		$$|B|^2 = \dfrac{P + 3 - |x|^2}{|x|^2 - 4 \Re ( x {\lambda}{\bar \lambda}^{-1} ) +3}.$$

Recall that $x = \lambda + {\bar \lambda}{\lambda^{-1}} + {\bar \lambda}^{-1}$ and $|x|^2 = x \overline{x}$. Letting $P = 6$, and $\lambda=re^{is}$ we derive (using {\it Mathematica}):  

$$|B|^2=1-\frac{2 \left(r^2-1\right)^2}{\left(r^2+1\right) \left(r^2-2 r \cos
   (3 s)+1\right)}.$$

This expression is well-defined for every value of $r$ and $s$ except where $r=1$ and $s=0, 2\pi/3,$ or $4\pi/3$.  In those cases the numerator and denominator are both 0.
   
\begin{figure}
[hbt] \centering
\includegraphics[height=6 cm]{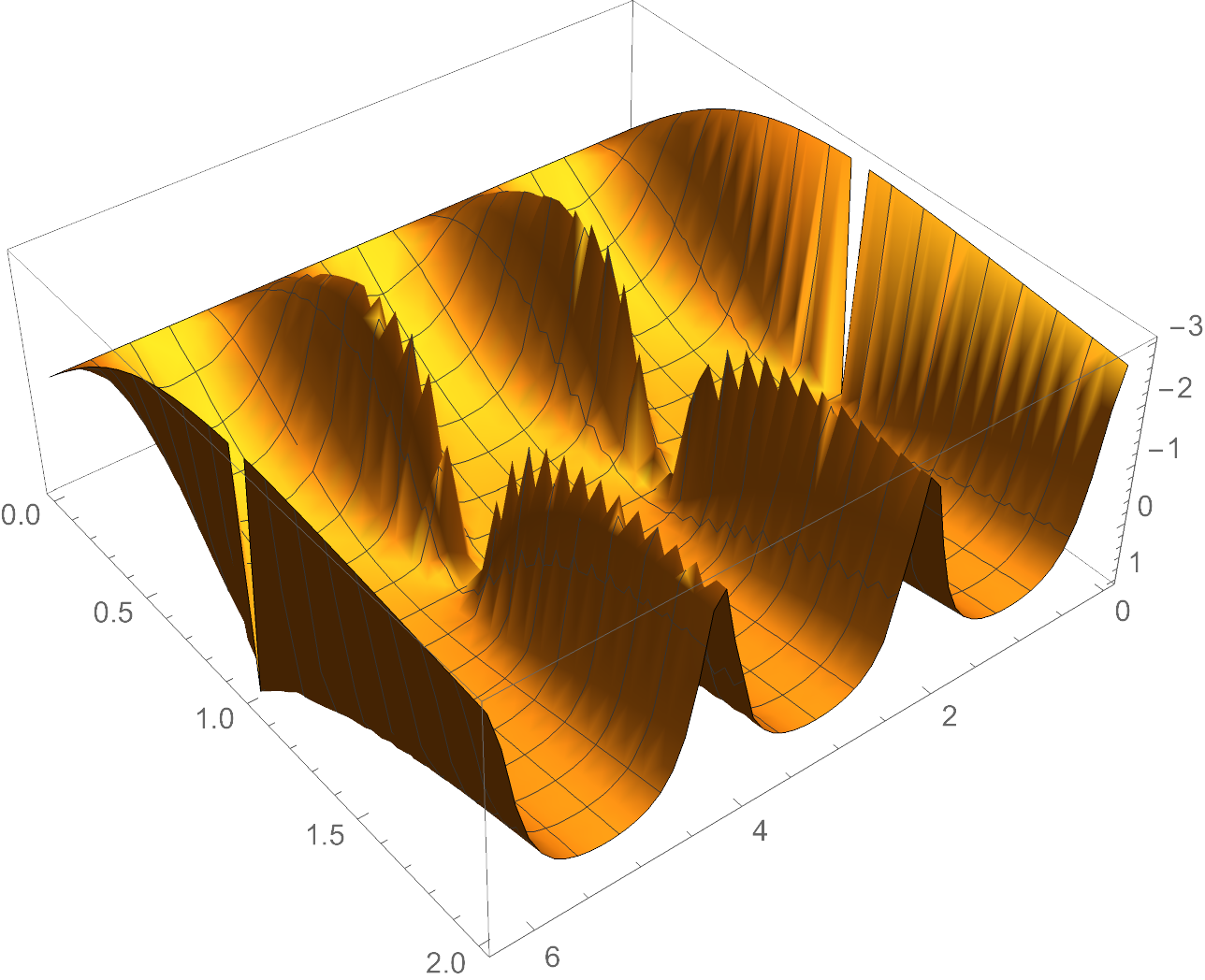}
\caption{Graph of $|B|^2$ when $P=6$.}
\label{fig:sharkfins}
\end{figure}
   
However, for every fixed value of $s$ not equal to $0, 2\pi/3,$ or $4\pi/3$, as $r\to 1$ the limit of $|B|^2$ is 1.  But for $s$ equal to any of $0, 2\pi/3, 4\pi/3$, the limit as $r\to 1$ is $-3$.  So when $\lambda$ converges to an element of $\zeta_3$, despite both the numerator and the denominator going to $0$, the expression stays bounded (see Figure \ref{fig:sharkfins}).  If this was not the case, then we would {\it not} have the situation that, for all fixed value of $s$ not equal to $0, 2\pi/3,$ or $4\pi/3$, as $r\to 1$, the limit of $|B|^2$ is $1$.

\subsection{\texorpdfstring{$P>6$}{P>6} case}

If $P > 6$, then the expression for $|B|^2$ as a function of $\lambda$ is unbounded when $\lambda$ gets close to $1$. In that case, we also need to use the expression for $Q$ which, after simplification, is:

\begin{align*}
Q = & |B|^4 \left(5 + |x|^2 + 2 \Re \left( 2 (\frac{\bar \lambda}{\lambda})^3- 2 \overline x  \frac{\bar \lambda}{\lambda} - x  (\frac{\bar \lambda}{\lambda})^2 \right) \right) \\
& + |B|^2 \left( |x|^4+3|x|^2-18+2\Re \left( 2 x^2 \frac{\bar \lambda}{\lambda}-3 x  (\frac{\bar \lambda}{\lambda})^2+6 \overline x  \frac{\bar \lambda}{\lambda} - 2 |x|^2 \overline x  \frac{\bar \lambda}{\lambda}+\overline x^2( \frac{\bar \lambda}{\lambda})^2-x^3 \right) \right) \\
& + 9 - 6|x|^2+2\Re (x^3).
\end{align*}

Replacing the expression of $|B|^2$ in terms of $P$ and $\lambda$ found previously and substituting  $\lambda=re^{is}$, we have an expression relating $Q, P$ and $\lambda$ (derived using {\it Mathematica}):
    
$$Q_P(\lambda)=\frac{\left(P r^4-r^6+1\right) \left(Pr^2+r^6-1\right)}{r^4 \left(r^2+1\right)^2}.$$
   
Amazingly $Q_P(\lambda)$ does not depend on $s$ at all.  Letting $r\to 1$ we see $Q_P (\lambda) \to \frac{P^2}{4}$.

\end{document}